\documentclass[12pt]{amsart}

\numberwithin{equation}{section}

\usepackage{amsmath,amsthm,amsfonts,eucal,}

\usepackage{subcaption}
    \usepackage{tikz,pgfplots}
    \usepackage{epsfig}
\usepackage[all]{xy}
\usepackage{graphicx}
\usepackage{amssymb}

\def\cf{{\mathcal F}}

\def\ch{{\mathcal H}}

\def\cam{{\mathcal M}}

\def\ga{{\mathfrak A}} 
\def\gb{{\mathfrak B}}
\def\gc{{\mathfrak C}}
\def\gd{{\mathfrak D}}

\def\gs{{\mathfrak S}}

\def\bc{{\mathbb C}}

\def\bn{{\mathbb N}}

\def\br{{\mathbb R}}
\def\bt{{\mathbb T}}

\def\bz{{\mathbb Z}}

\def\a{\alpha}
\def\b{\beta}
\def\g{\gamma}  
\def\d{\delta}  

\def\eps{\varepsilon}

\def\l{\lambda}

\def\n{\nu}

\def\s{\sigma} 
\def\t{\tau}
\def\f{\varphi}  
  
\def\om{\omega} \def\Om{\Omega}

\newtheorem{thm}{Theorem}[section]

\newtheorem{lem}[thm]{Lemma}
\newtheorem{cor}[thm]{Corollary}
\newtheorem{prop}[thm]{Proposition}

\theoremstyle{definition}
\newtheorem{rem}[thm]{Remark}

\def\supp{\mathop{\rm supp}}
\def\Im{\mathop{\rm Im}}

\def\supp{\mathop{\rm supp}}

\def\di{\mathop{\rm d}\!}
\def\di{{\rm d}}

\def\idd{{1}\!\!{\rm I}}

\begin{document}
\title[Periodicity, Type $II_1$ Factors and Free Poisson Laws]
{Periodicity, Type $II_1$ Factors and Free Poisson Laws in Interacting Fock Spaces}

\author{Vitonofrio Crismale}
\address{Vitonofrio Crismale\\
Dipartimento di Matematica\\
Universit\`{a} degli studi di Bari\\
Via E. Orabona, 4, 70125 Bari, Italy}
\email{\texttt{vitonofrio.crismale@uniba.it}}

\author{Yun Gang Lu}
\address{Yun Gang Lu\\
Dipartimento di Matematica\\
Universit\`{a} degli studi di Bari\\
Via E. Orabona, 4, 70125 Bari, Italy}
\email{\texttt{yungang.lu@uniba.it}}

\author{\'Eric Ricard}
\address{\'Eric Ricard\\
UNICAEN, CNRS, LMNO\\ 14000 Caen, France}
\email{\texttt{eric.ricard@unicaen.fr}}

%\date{\today}

\begin{abstract}
We show that the von Neumann algebra generated by position operators in a 2-periodic interacting Fock space is a type $II_1$ factor. 
On the probabilistic side, we prove that the squared position operators have a Marchenko--Pastur distribution with respect to the vacuum state, yielding a natural realization of free Poisson laws within this framework. 

\vskip0.1cm\noindent \\
{\bf Mathematics Subject Classification}: 46L35, 46L55, 46L10, 46L54  \\
{\bf Key words}: Interacting Fock spaces; von Neumann algebras;
free Poisson laws; free independence
\end{abstract}

\maketitle

\section{Introduction}
The operator algebras generated by creation and annihilation operators on Fock-type spaces provide a rich source of examples at the interface of operator algebras and non commutative probability. Classical instances include the free Fock space and its associated free group factors, as well as their deformations arising from generalized commutation relations. In many situations, algebraic modifications of the underlying Fock structure produce new von Neumann algebras together with remarkable probabilistic phenomena reflected in the distributions of the corresponding position operators.\\
Interacting Fock spaces, introduced by Accardi, Lu and Volovich in connection with stochastic limits in quantum theory \cite{ALV}, provide a general framework for studying weighted deformations of the full Fock space. Since their introduction, they have found applications in several areas of non commutative probability. Connections with monotone probability were established in \cite{Lu}, while more recent developments relate interacting Fock spaces to subproduct systems \cite{GS} and shift-invariant states \cite{CDR}. In the one-dimensional setting, interacting Fock spaces are closely related to orthogonal polynomials through the Jacobi parameters of probability measures \cite{AB}; further developments and generalizations may be found in \cite{AL,AHL1,AHL2}.
The case when the underlying Hilbert space $\mathcal H$ has dimension at least 2 and interacting functions are almost everywhere constant is commonly referred to as the \emph{one-mode type} or \emph{multi-mode} setting \cite{ACL, CL}, in order to distinguish it from the Accardi-Bo\.{z}ejko case. \\
Roughly speaking, a one-mode type interacting Fock space is obtained from the usual full Fock space by deforming the inner product on the $n$-particle subspace by means of a sequence of positive weights $(\lambda_n)_n$. If the ratio sequence $\omega_n := \lambda_n / \lambda_{n-1}$ is constant, the construction reduces, up to isomorphism, to the full Fock space. When the underlying Hilbert space is separable infinite dimensional, the $C^*$-algebra generated by annihilation operators provides a natural generalization of the Fock representation of the Cuntz algebra $\mathcal{O}_\infty$ \cite{C}. In this framework, Xu in \cite{Xu} proved that whenever the underlying Hilbert space has dimension at least two, the vacuum state is tracial on the von Neumann algebra generated by position operators
if and only if $(\omega_n)_n$ is constant.
Recall that the constant case corresponds to the $q$-commutation
relations \cite{BS,BKS} for $q = 0$, or equivalently the free group algebra with ${\rm dim} \mathcal H$ generators. In this setting, the
  resulting von Neumann algebra is well known to be a factor (when ${\rm dim} \mathcal H\geq 2$), and this is also true for the $q$-deformation \cite{R}.\\
  It is therefore natural to investigate what happens beyond the
constant case, where any tracial state, if it exists, cannot coincide
with the vacuum state. The simplest generalization arises when the
sequence $(\omega_n)_n$ takes only two distinct values. This was first investigated in \cite{R2}, where the case
  $\omega_1=1$, and $\omega_n=t$, $n>1$ was considered. Another interesting example can be realized by fixing positive values for both $\omega_{2n}$ and $\omega_{2n+1}$. In this case, the $*$-algebra generated by annihilation operators, as well as its self-adjoint part, becomes $\mathbb{Z}_2$-graded, and for this reason we refer to it as the \emph{period-2} setting.\\
Our main goal is to understand how periodicity influences both the structure of the generated von Neumann algebra and the spectral distributions of the position operators. In particular, we show that period-2 interacting Fock spaces provide a natural setting in which free probabilistic structures arise, even though position operators are not free. Moreover, the von Neumann algebra generated by position operators is a factor of type $II_1$, with the unique normal tracial state given by a convex combination involving the vacuum.\\
Our setting  deals with period-2 interacting Fock spaces with separable infinite dimensional underlying Hilbert space. Here, we show that the $C^*$-algebra generated by annihilation operators is irreducible, and we provide a space-free characterization of it. The unit circle $\mathbb{T}$ acts on the universal $C^*$-algebra by $*$-automorphisms, and this action is equivariant with respect to the natural action of $\mathbb{T}$ on the Fock representation, induced by conjugation with unitaries on the Fock space. As in the Cuntz case, one finds that the Fock representation is, up to isomorphism, the unique faithful covariant $*$-representation of the universal $C^*$-algebra (Theorem~\ref{spacefree}). Since the projection onto the vacuum does not belong to the image of the Fock representation, separability and irreducibility imply that the universal $C^*$-algebra is not of type $I$.\\
For the von Neumann algebra generated by position operators, we first prove that the vacuum vector is cyclic and separating for the even subalgebra with respect to the $\mathbb{Z}_2$-grading, and that this subalgebra is a type $II_1$ factor with the vacuum state as its unique trace. The existence of a unique tracial state on the odd subalgebra requires more delicate arguments, and marks the most difficult problem in Section \ref{vna}, involving two technical lemmas that reflect the different behavior of the kernel of the position operator depending on the order relation between $\omega_{2n}$ and $\omega_{2n+1}$. A suitable convex combination of the corresponding marginal normal traces, induced by the commutation relations, yields a unique normal trace on the whole algebra. Consequently, the full von Neumann algebra is a type $II_1$ factor (Theorem~\ref{factorr}).\\
In the final part of the paper, we study the vacuum distribution of the position operators. Using combinatorial techniques adapted to the interacting Fock space structure in the period-2 case, we show that the distribution of the squared position operator belongs to the Marchenko-Pastur family \cite{MP}, whose parameters depend only on the two periodic weights (Theorem \ref{MarPa}). As is well known, this family coincides with the free Poisson laws of free probability \cite{NS} and includes, as a special case, the square of the (standard) Wigner distribution. In our framework, this is consistent with the fact that the vacuum distribution of the position operator reduces to a (not necessarily standard) Wigner law when the weights $(\omega_n)_n$ are constant (Proposition \ref{distri}).\\
Furthermore, we show that the right endpoint of the support of the vacuum distribution coincides with the norm of each position operator (Corollary~\ref{norm}), even though the vacuum vector is not separating for the entire von Neumann algebra. \\
We also investigate independence properties of the position operators from the viewpoint of free probability. We prove that freeness admits a sharp characterization in terms of the weight sequence: in the vacuum state, the position operators are free if and only if the weights are constant, while their squares (and hence all even powers) form a free family if and only if the sequence is 2-periodic (Proposition \ref{2perfree}). In particular, periodicity recovers free probabilistic structures after passing to even observables. As a consequence, the von Neumann algebra generated by the position operators contains a copy of the free group factor $L(\mathbb{F}_\infty)$, and one can identify free additive and multiplicative convolution structures in this framework.
These results show that periodicity constitutes a new mechanism through which type $II_1$ factors and free-probabilistic structures emerge in interacting Fock spaces, extending several phenomena previously known only in the constant-weight case.

\section{Preliminaries}

Let $S$ be a finite set with cardinality $|S|$. A partition $\pi$ of $S$ is a collection of mutually disjoint subsets $B_1,\ldots,B_k$, whose union is $S$. The elements of $\pi$ are called \emph{blocks}. Throughout the paper we set $S=[m]:=\{1,\ldots,m\}$, and we denote by $P(m)$ the set of all partitions of $[m]$.\\
A partition $\pi$ is called a \emph{pair partition} if each block has cardinality $2$, that is, $|B_j|=2$ for all $j=1,\ldots,k$. In this case, the total number of elements must be even, and the set of pair partitions of $[2n]$ is denoted by $PP(2n)$. It is convenient to represent each block in this case by its left and right endpoints, writing
\[
B_j=(l_j,r_j), \qquad l_j<r_j\,,
\]
and use the convention that $l_j<l_{j+1}$ for each $j=1,\ldots, n-1$. Therefore, automatically $l_1=1$ for each $\pi\in PP(2n)$.\\
A partition $\pi$ is said to have a \emph{crossing} if there exist two distinct blocks $B_i$ and $B_j$ and elements $v_1,v_2\in B_i$ and $w_1,w_2\in B_j$ such that
\[
v_1 < w_1 < v_2 < w_2.
\]
If no such configuration exists, the partition is called \emph{non crossing}. We denote by $NCPP(2n)$ the set of non crossing pair partitions of $[2n]$. Thus, for $\pi=\{(l_h,r_h)\}_{h=1}^{n}\in NCPP(2n)$, one has $(l_n,r_n)=(l_n,l_n+1)$. \\
The set $NCPP(2n)$ can be naturally identified with a subset of $\{-1,1\}$-valued maps on $[2n]$. More precisely, let $\{-1,1\}^{2n}$ denote the set of maps $\varepsilon:[2n]\to\{-1,1\}$, and define
\[
\{-1,1\}^{2n}_+ :=
\left\{
\varepsilon\in\{-1,1\}^{2n}\ \middle|\
\sum_{h=1}^{2n}\varepsilon(h)=0
\ \text{and}\
\sum_{h=1}^{r}\varepsilon(h)\le 0\,, r\in[2n]
\right\}.
\]
Then the correspondence
\[
NCPP(2n)\ni\{(l_h,r_h)\}_{h=1}^n \mapsto \varepsilon\in\{-1,1\}^{2n}_+,
\]
defined by setting $\varepsilon(l_h)=-1$ and $\varepsilon(r_h)=1$ for each $h$, establishes a bijection.\\
Removing one block, say $(l_k,r_k)$, from a partition $\{(l_h,r_h)\}_{h=1}^n$ belonging to $NCPP(2n)$ gives rise to a non crossing pair partition $\{(l'_h,r'_h)\}_{h=1}^{n-1}$ in $NCPP(2n-2)$. Indeed, if $d:\{(l_h,r_h)\mid h=1,\ldots, n, h\neq k\}\rightarrow [2n-2]$ is the order preserving map, one denotes $l'_h=d(l_h)$, $r'_h=d(r_h)$ if $h<k$ and $l'_h=d(l_{h+1})$, $r'_h=d(r_{h+1})$ if $h>k$. The new partition is referred to as the \emph{reduction} of the original partition. Under the above correspondence, this operation yields a reduced sequence $\varepsilon'\in\{-1,1\}^{2n-2}_+$.

\medskip

Let $\mu$ be a probability measure on $\mathbb{R}$ with finite moments, and denote by $(m_n(\mu))_{n\geq 0}$ its moment sequence. The associated \emph{moment generating function} is defined, for $z\in\mathbb{C}$, by
\[
\mathcal{M}_\mu(z):=\sum_{n=0}^{\infty} z^n m_n(\mu),
\]
which is understood as a formal power series whenever the series does not converge absolutely.\\
Denote by $\mathbb{C}^+$ and $\mathbb{C}^-$ the upper and lower complex half-planes, respectively. The \emph{Cauchy transform} of $\mu$ is defined by
\[
\mathcal{G}_{\mu}(z):=\int_{-\infty}^{+\infty}\frac{\mu(dx)}{z-x}.
\]
The function $\mathcal{G}_{\mu}$ is analytic on $\mathbb{C}\setminus\supp(\mu)$, satisfies $\mathcal{G}_{\mu}(\overline{z})=\overline{\mathcal{G}_{\mu}(z)}$, and maps $\mathbb{C}^+$ into $\mathbb{C}^-$. As a consequence, it suffices to consider $\mathcal{G}_{\mu}$ on $\mathbb{C}^+\cup\bigl(\mathbb{R}\setminus\supp(\mu)\bigr)$. In this region, the Cauchy transform uniquely determines the measure $\mu$ (see, \textit{e.g.}, \cite{HO}):

\medskip
\noindent
(1) The limit
\begin{equation}
\label{stiedef}
\rho_\mu(x):=-\frac{1}{\pi}\lim_{y\to 0^{+}}\Im \mathcal{G}_{\mu}(x+iy)
\end{equation}
exists for almost every $x\in\mathbb{R}$, and $\rho_\mu(x)\,dx$ coincides with the absolutely continuous part of $\mu$. Formula \eqref{stiedef} is known as the \emph{Stieltjes inversion formula}.

\medskip
\noindent
(2) The function $\mathcal{G}_{\mu}$ has a simple pole at $z=a\in\mathbb{R}$ if and only if $a$ is an isolated point of $\supp(\mu)$. In this case,
\[
\mu=c\,\delta_a+(1-c)\nu,\qquad 0\le c\le1,
\]
where $\nu$ is a probability measure satisfying $\nu(\{a\})=0$. Moreover, $c=\operatorname*{Res}_{z=a}\mathcal{G}_{\mu}(z)$.\\
A direct computation shows that
\begin{equation}
\label{AA}
\mathcal{G}_{\mu}(z)
=\frac{1}{z}\,
\mathcal{M}_\mu\!\left(\frac{1}{z}\right),
\qquad z\in\mathbb{C}^+.
\end{equation}
We conclude this section by recalling the Marchenko-Pastur distribution $\mu$ (see, \textit{e.g.}, \cite{MP}). In free probability, this distribution is also known as the \emph{free Poisson distribution}, since it arises as the Poisson limit law for two-point processes with respect to additive free convolution (see, \textit{e.g.}, \cite{NS}). For parameters $\lambda\geq 0$ and $\gamma\in\mathbb{R}$, the measure $\mu$ is defined by
\[
\mu :=
\begin{cases}
(1-\lambda)\delta_0+\widetilde{\mu}, & 0\le \lambda\le1,\\
\widetilde{\mu}, & \lambda>1,
\end{cases}
\]
where
\[
d\widetilde{\mu}(t)
:=\frac{1}{2\pi\gamma t}
\sqrt{4\lambda\gamma^2-(t-\gamma(1+\lambda))^2}\,
\chi_{[\gamma(1-\sqrt{\lambda})^2,\ \gamma(1+\sqrt{\lambda})^2]}(t)\,dt.
\]
In analogy with the classical setting, $\lambda$ is called the \emph{rate} and $\gamma$ the \emph{jump size}.\\
The Cauchy transform of $\mu$ is given by
\begin{equation}
\label{CaMP}
\mathcal{G}_{\mu}(z)=
\frac{z+\gamma-\lambda\gamma-\sqrt{(z-\gamma(1+\lambda))^2-4\lambda\gamma^2}}
{2\gamma z}.
\end{equation}
Finally, note that $\mu$ coincides with the law of the square of a Wigner random variable with variance $\sigma^2$ in the special case $\lambda=1$ and $\gamma=\sigma^2$.

\section{On one-mode type interacting Fock spaces}
In this section we introduce one-mode type interacting Fock spaces and study vacuum moments of the associated creation and annihilation operators.\\

Let $\ch$ be a separable Hilbert space with canonical basis $(e_n)_n$, and fix a sequence $(\l_n)_n$ of non-negative numbers with $\l_0=1$. For each $n$, $\l_n$ naturally induces a deformation of the usual tensor product $(\ch^{\otimes n},(\cdot,\cdot)_n)$ by defining
\[
\langle f_1\otimes \cdots \otimes f_n,\, g_1\otimes \cdots \otimes g_n\rangle_n
:= \l_n (f_1\otimes \cdots \otimes f_n,\, g_1\otimes \cdots \otimes g_n)_n .
\]
For any $n$, denote by $\ch_n$ the Hilbert space completion with respect to this new inner product, and note that $\ch_0:=\bc \Omega$, for a distinguished unit vector $\Omega$, called the vacuum. The gradation
\[
\cf(\ch,\l_n):=\bigoplus_{n=0}^{\infty} \ch_n
\]
defines a Hilbert space called \emph{one-mode type interacting Fock space}, see, for example, \cite{ACL}. If each $\l_n$ is equal to $1$, one recovers the usual full Fock space.\\
The creation operator with test function $f\in\ch$ is defined by
\[
a^\dag(f)\Omega := f, \qquad
a^\dag(f)\, f_1\otimes \cdots \otimes f_n
:= f\otimes f_1\otimes \cdots \otimes f_n\,.
\]
where $f_1,\ldots, f_n\in\ch$.
Note that if there exists an integer $n$ such that $\l_n=0$, then one must have $\l_{n+m}=0$ for all $m$: this condition ensures that the creation operator is well defined. The annihilation operator is given by
\[
a(f)\Omega := 0, \qquad
a(f)\, f_1\otimes \cdots \otimes f_n
:= \om_n \langle f,f_1\rangle f_2\otimes \cdots \otimes f_n ,
\]
where $\om_n := \frac{\l_n}{\l_{n-1}}$. When $(\om_n)_n$ is bounded, both operators extend linearly to the whole space $\cf(\ch,\l_n)$ and are adjoint to each other. To simplify notation, in what follows we write $a_i$ and $a_i^\dag$ instead of $a(e_i)$ and $a^\dag(e_i)$, respectively.\\
If $N$ denotes the number operator, then for each $f,g\in\ch$, one has
\begin{equation}
\label{cr}
a(f) a^{\dag} (g) = \langle f, g\rangle\,\om_{N+1},
\end{equation}
where $\d_{i,j}$ is the Kronecker symbol and $\om_{N+1}$ is determined by $N$ and the sequence $(\om_n)_n$ via functional calculus. In addition, one has
\begin{equation}
\label{cr2}
\om_{N+1} a^\dag(f) = a^\dag(f) \om_{N+2}, \qquad
\om_{N+1} a(f) = a(f) \om_N .
\end{equation}
Since the subject of these notes is a particular one-mode type interacting Fock space in which the sequence $(\om_n)_n$ takes only two values infinitely many times, from now on we assume that $\om_n>0$ for each $n$ and $(\om_n)_n$ is bounded. Equivalently, the latter condition gives the creation and annihilation operators are bounded on $\cf(\ch,\l_n)$. The truncated case $\om_n=0$ for some $n$ has been treated in \cite{CDR}.\\
As in \cite{AHL2}, one observes that, for any $j\in\bn$, each finite sequence of $a_j$ and $a_j^\dag$ can be written in the so-called normal (or Wick) order, that is, with all creation operators appearing to the left of all annihilation operators. More precisely, \eqref{cr}, \eqref{cr2} and an induction argument yield that for each $n\in\bn$, and $\s(1),\ldots \s(n)\in\{1,\dag\}$
\begin{equation*}
a_j^{\s(1)}a_j^{\s(2)}\cdots a_j^{\s(n)}=(a_j^\dag)^{n_+}a_j^{n_-}\prod_{k=1}^{n_0}\om_{N+h_k}\,,
\end{equation*}
where $n_0$ is the number of couples in the sequence of the type appearing in \eqref{cr}, $n_+$ ($n_-$) is the number of creators (annihilators) in the sequence minus $n_0$, and finally $h_1,\ldots, h_{n_0}\in\bz$. When the sequence contains an even number of operators, say $2n$, then $n_++\,n_-=2(n-n_0)$, that is $n_+ +\,n_-\in\{2m\mid m\in[0,n]\}$, and consequently $n_+-n_-\in\{2m \mid m\in[-n,n]\}$. The following proposition will be exploited in Section \ref{seclaw}.
We use the convention that $\omega_n=0$ if $n<0$.
\begin{prop}\label{free1}
For each $j\in\bn$, let $\gd^0_j$ be the linear span of
$$
\bigg\{(a_j^\dag)^{n_+}a_j^{n_-}\prod_{k=1}^{n_0}\om_{N+h_k} :n_0,n_-,n_+\in\bn,\, n_++n_-\in 2\bn,\, h_k\in\bz,\, k\in [n_0]\bigg\}.
$$
Then $\gd^0_j$ is a $*$-algebra containing the identity operator of $\cf(\ch,\l_n)$.
\end{prop}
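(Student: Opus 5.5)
Since $\gd^0_j$ is by definition a linear span, it is a vector space, and it contains the identity: take $n_0=n_+=n_-=0$ (empty product, $0\in 2\bn$). So the plan is to check that the spanning set is closed under adjoints and under products; bilinearity and antilinearity of the adjoint then extend both properties to the whole span. Throughout we use that each $\om_{N+h}$ is a bounded self-adjoint function of $N$ (with the convention $\om_n=0$ for $n<0$). Hence all the $\om_{N+h_k}$ commute with one another, and any finite product of them is again a product of the same form.

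\emph{Adjoints.} For a generator $X=(a_j^\dag)^{n_+}a_j^{n_-}\prod_k\om_{N+h_k}$ we have
\[
X^*=\prod_k\om_{N+h_k}\,(a_j^\dag)^{n_-}a_j^{n_+}.
\]
We move the $\om$'s to the right using \eqref{cr2}, extended to arbitrary shifts $h\in\bz$; this is immediate since $a_j$ lowers and $a_j^\dag$ raises $N$ by one. The shifted relations are $\om_{N+h}a_j^\dag=a_j^\dag\om_{N+h+1}$ and $\om_{N+h}a_j=a_j\om_{N+h-1}$, and the convention $\om_n=0$ for $n<0$ is consistent on the relevant ranges. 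Each commutation only changes the shifts $h_k$ by integers. The result is $(a_j^\dag)^{n_-}a_j^{n_+}\prod_k\om_{N+h'_k}$, which lies in the spanning set because $n_-+n_+$ is still even.

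\emph{Products.} Take two generators $X=(a_j^\dag)^{n_+}a_j^{n_-}P$ and $Y=(a_j^\dag)^{m_+}a_j^{m_-}Q$, where $P,Q$ are products of $\om$'s. First move $P$ to the right past $(a_j^\dag)^{m_+}a_j^{m_-}$ by the shifted \eqref{cr2}, which gives
\[
XY=(a_j^\dag)^{n_+}a_j^{n_-}(a_j^\dag)^{m_+}a_j^{m_-}P'Q.
\]
Next, put the middle word $a_j^{n_-}(a_j^\dag)^{m_+}$ into normal order using the Wick-ordering formula stated just before the proposition, which rests on \eqref{cr} and \eqref{cr2}. Iterating $a_j a_j^\dag=\om_{N+1}$ shows that $a_j^{n_-}(a_j^\dag)^{m_+}$ equals $(a_j^\dag)^{m_+-n_-}\prod\om_{N+\cdot}$ if $m_+\ge n_-$, and $a_j^{n_--m_+}\prod\om_{N+\cdot}$ otherwise, with $\min(n_-,m_+)$ factors in the product. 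Commuting these $\om$'s to the far right once more leaves a single creator power, a single annihilator power, and a product of $\om_{N+h}$'s.

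\emph{Parity and the main point.} The only step needing care is checking that the total number of creators plus annihilators stays even. Each contraction removes one creator and one annihilator, so the new total is
\[
(n_++n_-)+(m_++m_-)-2\min(n_-,m_+),
\]
which is even. Everything else is bookkeeping of integer shifts. Hence $\gd^0_j$ is closed under products and adjoints, so it is a unital $*$-algebra.
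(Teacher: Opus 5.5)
Your proposal is correct and follows essentially the same route as the paper. You move the $\om_{N+h}$ factors to the right via the shifted form of \eqref{cr2}, normal-order $a_j^{n_-}(a_j^\dag)^{m_+}$ using \eqref{cr}, and note that parity is preserved because each contraction removes one creator and one annihilator. You also spell out the adjoint step, which the paper only asserts.
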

\begin{proof}
For any $j\in\bn$, $\gd^0_j$ is closed under involution, and, if $\prod_{k\in\emptyset}:=1$, then $\displaystyle{I_{\cf(\ch,\l_n)}=(a_j^\dag)^{0}a_j^{0}\prod_{k\in\emptyset}\om_{N+h_k}}$. It remains to prove that, if
$$
b:=(a_j^\dag)^{n_+}a_j^{n_-}\prod_{k=1}^{n_0}\om_{N+h_k}\,, \quad c:=(a_j^\dag)^{m_+}a_j^{m_-}\prod_{k=1}^{m_0}\om_{N+r_k}\,,
$$
where $n_++\,n_-, m_+\,+m_-\in 2\bn$ and $h_k,  r_k\in\bz$ for each $k\in [1,n_0]$, then $bc\in\gd^0_j$. Indeed, \eqref{cr2} gives
$$
bc=(a_j^\dag)^{n_+}a_j^{n_-}(a_j^\dag)^{m_+}a_j^{m_-}\prod_{k=1}^{n_0}\om_{N+h_k+m_++m_-}\prod_{k=1}^{m_0}\om_{N+r_k}\,.
$$
A repeated use of \eqref{cr} and \eqref{cr2} yields
$$
a_j^{n_-} (a_j^\dag)^{m_+}=\begin{cases}
\displaystyle{(a_j^\dag)^{(m_+-n_-)}\prod_{k=1}^{n_-}\om_{N+k+m_+-n_-}}
&\text{ if }n_-\leq m_+\\
\displaystyle{a_j^{(n_--m_+)}\prod_{k=1}^{m_+}\om_{N+j}}
&\text{ if }n_->m_+\,.
\end{cases}
$$
This means that $bc$ is
$$
\displaystyle{(a_j^\dag)^{(n_++m_+-n_-)}a_j^{m_-}
\prod_{k=1}^{n_-}\om_{N+k+m_+-n_--m_-}
\prod_{k=1}  ^{n_0} \om_{N+h_k+m_+-m_-}\prod_{k=1}  ^{m_0} \om_{N+r_k}}
$$
or
$$
\displaystyle{(a_j^\dag)^{n_+}a_j^{n_--m_++m_-}
\prod_{k=1}^{m_+}\om_{N+k-m_-}
\prod_{k=1}^{n_0} \om_{N+h_k+m_+-m_-}\prod_{k=1}  ^{m_0} \om_{N+r_k}}
$$
according to whether $n_-\leq m_+$ or $n_->m_+$, respectively.\\ The assumptions on $n_+,n_-,m_+-m_-$ imply that, when $n_-\leq m_+$, then $n_+-n_-+m_++m_-\in 2\bn$, whereas, if $n_->m_+$, one finds $n_++n_--m_++m_-\in 2\bn$.
\end {proof}
The self-adjoint part of the creation and annihilation operators, usually referred to as the Gaussian operator, is defined by $s_j := a_j + a_j^\dag$, for $j\geq 1$. The vector state associated with $\Om$ is usually referred to as the vacuum state or the vacuum expectation, and often denoted by $\t_\Om$.\\
Recalling that for each natural integers $j$ and $n$ one has
\begin{align}
\label{2ome01b}
\t_\Om(s_j^n)=\langle\Omega,s_j^n\Omega\rangle
=
\begin{cases}
0 & \text{if $n$ is odd},\\
\displaystyle
\sum_{\eps\in\{-1,1\}^{2n}_+}
\langle\Omega,
a_j^{\eps(1)}\cdots a_j^{\eps(2m)}\Omega\rangle
& \text{if $n=2m$},
\end{cases}
\end{align}
where
\begin{equation*}
%\label{epsi}
a_j^\eps :=
\begin{cases}
a_j & \text{if }\eps=-1,\\
a_j^\dag & \text{if }\eps=1,
\end{cases}
\end{equation*}
we state a result that will be needed later.
\begin{prop}
\label{moment1}
For each $\eps\in\{-1,1\}^{2n}_+$ and each $j\in\bn$, let $\{l_h,r_h\}_{h=1}^n$ be the corresponding non crossing pair partition associated with $\eps$. If $f,g,f_1,\ldots, f_{2n}\in\ch$, then
\begin{equation}
\label{2ome02a}
\langle\Omega,
a^{\eps(1)}(f_1)\cdots a^{\eps(2n)}(f_{2n})\Omega\rangle
=
\prod_{h=1}^n \om_{2h-l_h}\langle f_{l_h},f_{r_h}\rangle,
\end{equation}
and
\begin{align}
\label{2ome02b}
\langle\Omega,
a(f) a^{\eps(1)}(f_1)\cdots a^{\eps(2n)}(f_{2n}) a^\dag(g) \Omega\rangle
=
\om_1\langle f,g\rangle \prod_{h=1}^n \om_{2h-l_h+1}\langle f_{l_h},f_{r_h}\rangle.
\end{align}
\end{prop}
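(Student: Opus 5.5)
Both identities can be proved by induction on $n$, peeling off an innermost pair of the non crossing pair partition. Equivalently, one can trace the action of the operators on $\Omega$ from right to left and record the particle number at which each annihilation acts.

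The key observation is the following. Since $\eps\in\{-1,1\}^{2n}_+$, reading the word from the right, each $a^\dag(f_{r_h})$ creates a particle and the matching $a(f_{l_h})$ later destroys it. By non crossingness, the particle destroyed at position $l_h$ is exactly the one created at $r_h$: it sits in the first tensor slot when $a(f_{l_h})$ acts, because every creation strictly between $l_h$ and $r_h$ has already been annihilated. The annihilation $a(f_{l_h})$ acts on an $N$-particle vector, where $N$ is the number of creators at positions $>l_h$ minus the number of annihilators at positions $>l_h$. Equivalently, $N=-\sum_{k=1}^{l_h}\eps(k)+1$, using that the total sum is $0$. Hence it produces the factor $\om_{N}\langle f_{l_h},f_{r_h}\rangle$. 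The vector being elementary (a simple tensor) at every stage, the inner products factor out, and the final vector is a multiple of $\Omega$ with $\langle\Omega,\Omega\rangle=1$.

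So the only thing to verify is the combinatorial identity $N=2h-l_h$.

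1. Among positions $1,\dots,l_h-1$, the left endpoints are exactly $l_1,\dots,l_{h-1}$, by the ordering convention $l_j<l_{j+1}$. So there are $h-1$ annihilators there.
2. The remaining $l_h-1-(h-1)=l_h-h$ positions before $l_h$ are creators.
3. Therefore $\sum_{k=1}^{l_h}\eps(k)=(l_h-h)-(h-1)-1=l_h-2h$.
4. Hence $N=2h-l_h$, which yields \eqref{2ome02a}.

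If one prefers induction, remove the last pair $(l_n,l_n+1)$, which is adjacent. Using \eqref{cr}, $a(f_{l_n})a^\dag(f_{l_n+1})=\langle f_{l_n},f_{l_n+1}\rangle\om_{N+1}$. Then use \eqref{cr2} to move $\om_{N+1}$ to the right onto $\Omega$, where $N$ evaluates to the appropriate number, and apply the reduction described in the Preliminaries. In this version the bookkeeping is the same counting as above.

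For \eqref{2ome02b}, the outer pair $a(f)\cdots a^\dag(g)$ forms a block enclosing everything. Place it at positions $0$ and $2n+1$; it contributes $\om_1\langle f,g\rangle$. Indeed, after the inner word acts on $g$, the result is a one-particle vector proportional to $g$, since the inner word never touches the bottom slot: the partial sums stay $\le 0$. The inner annihilators now act on states with one extra particle, so each index shifts by one, giving $\om_{2h-l_h+1}$.

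The main (though mild) obstacle is the careful justification that the annihilator at $l_h$ really pairs with the creator at $r_h$ and sees exactly $2h-l_h$ particles. I would handle this via the partial-sum characterization of $\{-1,1\}^{2n}_+$.
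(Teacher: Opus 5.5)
Your proof is correct. Your secondary sketch is precisely the paper's proof: delete the adjacent block $(l_n,l_n+1)$, contract it with \eqref{cr}, and move $\om_{N+1}$ with \eqref{cr2} through the $2n-l_n-1$ creators to its right, which produces $\om_{2n-l_n}$. The paper then uses $l'_h=l_h$ for $h<n$ to feed the reduced partition into the induction hypothesis, and it dismisses \eqref{2ome02b} with ``the same arguments''.

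Your main route evaluates all blocks at once, and it has to pay for the step you flag yourself. You need that non-crossingness makes the sub-word strictly between $l_h$ and $r_h$ balanced, with nonnegative partial sums read from the right, so that $f_{r_h}$ sits in the first slot when $a(f_{l_h})$ acts. The induction never needs this: adjacency plus \eqref{cr} performs the pairing, and the compatibility of the bijection $NCPP(2n)\leftrightarrow\{-1,1\}^{2n}_+$ with reduction does the bookkeeping.

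In exchange, your version explains the index: $2h-l_h$ is the number of particles present when the $h$-th annihilator acts. It also gives \eqref{2ome02b} for free. That identity is \eqref{2ome02a} applied to $(-1,\eps(1),\ldots,\eps(2n),1)\in\{-1,1\}^{2n+2}_+$ with test functions $f,f_1,\ldots,f_{2n},g$. Its blocks $(1,2n+2)$ and $(l_h+1,r_h+1)$ contribute $\om_1\langle f,g\rangle$ and $\om_{2(h+1)-(l_h+1)}=\om_{2h-l_h+1}$ respectively.

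One slip needs fixing. The vector on which $a(f_{l_h})$ acts is $a^{\eps(l_h+1)}(f_{l_h+1})\cdots a^{\eps(2n)}(f_{2n})\Omega$. It has $N=\sum_{k>l_h}\eps(k)=-\sum_{k=1}^{l_h}\eps(k)$ particles, equivalently $1-\sum_{k=1}^{l_h-1}\eps(k)$, which is presumably what you meant. It does not have $-\sum_{k=1}^{l_h}\eps(k)+1$ particles: for $n=1$, $a(f_1)$ acts on one particle, while your formula gives $2$. Your step 3 combined with the correct formula yields $N=2h-l_h$, as you assert in step 4; with the formula as written it would give $2h-l_h+1$.
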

\begin{proof}
We fix $j\in\bn$ and proceed by induction. The case $n=1$ reduces to
\[\langle\Om,a^{\eps(1)}(f_1) a^{\eps(2)}(f_2)\Om\rangle
=\langle\Om,a(f_1) a^\dag(f_2)\Om\rangle=\om_1\langle f_1,f_2\rangle\]
and
\begin{align*}
\langle\Om,a(f)a^{\eps(1)}(f_1) a^{\eps(2)}(f_2)a^\dag(g)\Om\rangle&=\langle\Om,a(f)a(f_1) a^\dag(f_2)a^\dag(g)\Om\rangle\\
&=
\om_1 \om_2\langle f,g\rangle \, \langle f_1,f_2\rangle\,,
\end{align*}
where we exploited \eqref{cr} and \eqref{cr2}.\\
Suppose now that \eqref{2ome02a} and \eqref{2ome02b} hold for any $n\leq m$. We consider the case $n=m+1$. For any $\eps\in\{-1,1\}^{2m+2}_+$, we denote by $\eps'$ the reduction of $\{l_h,r_h\}_{h=1}^{m+1}$ corresponding to the erasing of the most inner block $(l_{m+1}, r_{m+1})$. Then one has
\begin{align}
\begin{split}
\label{2ome02d}
&\langle\Om,a^{\eps(1)}(f_1)\cdots a^{\eps(2m+2)}(f_{2m+2}) \Om\rangle \\
=&\langle\Om,a^{\eps(1)}(f_1)\cdots a^{\eps(l_{m+1}-1)}(f_{l_{m+1}-1}) a(f_{l_{m+1}})a^\dag(f_{r_{m+1}})\cdots a^\dag(f_{2m+2})\Om\rangle \\
=&\om_{2m+2-l_{m+1}}\langle f_{l_{m+1}},f_{r_{m+1}}\rangle \prod_{h=1}^{m} \om_{2h-l'_h}\langle f_{l'_h},f_{r'_h}\rangle
\end{split}
\end{align}
where the last equality comes from the induction assumption. As for each $h=1,\ldots,m$ one has $l'_h=l_h$ and $f_{r'_h}=f_{r_h}$,  \eqref{2ome02d} turns out to be
$$
\langle\Om,a^{\eps(1)}(f_1)\ldots a^{\eps(2m+2)}(f_{2m+2}) \Om\rangle=\prod_{h=1}^{m+1}\om_{2h-l_h}\langle f_{l_h},f_{r_h}\rangle\,.
$$
The same arguments also show that \eqref{2ome02b} holds when $n=m+1$.
\end{proof}

\section{The $C^*$-algebra generated by annihilation operators in period-2 interacting Fock spaces}
In this section, we investigate a class of one-mode type interacting Fock spaces associated with period-2 interacting sequences. Our main goal is to analyze the $C^*$-algebra generated by the annihilation operators and to provide a space-free characterization of it. \\

Let us start with the sequence $(\l_n)_n$ defined, for fixed $\a,\b>0$, by
\begin{equation}\label{lambda}
\l_{2n} := \a^n \b^n, \qquad
\l_{2n+1} := \a^{n+1} \b^n, \quad n\geq 0.
\end{equation}
As a consequence,
\begin{equation}\label{period}
\om_{2n-1} = \a, \qquad \om_{2n} = \b, \quad n \geq 1.
\end{equation}
The corresponding one-mode type interacting Fock space is said to have \emph{period 2} and is denoted by $\cf(\ch,\a,\b)$. The creation and annihilation operators are bounded. Indeed, for any $f\in\ch$, we have
\begin{equation}\label{norm}
\|a^\dag(f)\| = \|a(f)\| \le \max(\sqrt{\a}, \sqrt{\b}) \|f\|.
\end{equation}
Let $\ga_0$ denote the $*$-algebra generated by $\{a_i \mid i \in \bn\}$, and let $\ga$ be its uniform closure. The definition of the creation operators implies that $\Om$ is cyclic for $\ga$.
Note that $\ga_0$ is unital. Indeed, from \eqref{cr} and \eqref{cr2} one obtains
\begin{equation}\label{id}
a_i a_j a_j^\dag a_i^\dag = a_i \om_{N+1} a_i^\dag = \om_{N+2} a_j a_j^\dag = \om_{N+2} \om_{N+1} = \a \b I, \quad i,j \in \bn,
\end{equation}
where $I$ is the identity operator.

Let $P$ be the projection of $\cf(\ch,\a,\b)$ onto $\bigoplus_{n=0}^{\infty} \ch_{2n}$, and define $Q := I-P$. By \eqref{period}, the relations \eqref{cr} and \eqref{cr2} take the form
\begin{align}\label{crperiod}
\begin{split}
&a_i a_j^\dag = \d_{i,j} (\a P + \b Q), \quad i,j\in\bn,\\
&a_i P = Q a_i, \qquad a_i^\dag P = Q a_i^\dag, \quad i \in \bn\,.
\end{split}
\end{align}
The relations above immediately yield a linear basis for $\ga_0$:

\begin{prop}\label{basis}
The words of the form
\begin{align}\label{hambas}
\begin{split}
X &:= a^\dag_{i_1} \cdots a^\dag_{i_n} a_{j_1} \cdots a_{j_m} P,\\
\widetilde{X} &:= a^\dag_{l_1} \cdots a^\dag_{l_p} a_{k_1} \cdots a_{k_q} Q,
\end{split}
\end{align}
where $n,m,p,q \geq 0$ and
$i_1,\dots,i_n,j_1,\dots,j_m,l_1,\dots,l_p,k_1,\dots,k_q \in \bn$, form a Hamel basis of $\ga_0$.
\end{prop}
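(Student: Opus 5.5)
Throughout I assume $\a\neq\b$. Otherwise $\om$ is constant, we are in the Cuntz--Toeplitz setting, and $P$ need not lie in $\ga_0$, so the words $X,\widetilde X$ need not even belong to $\ga_0$. The proof has two halves: first show that the words \eqref{hambas} lie in $\ga_0$ and span it, then show that they are linearly independent by evaluating on suitable tensors.

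\emph{Membership and spanning.} By \eqref{crperiod}, $a_ia_i^\dag=\a P+\b Q=\b I+(\a-\b)P$. By \eqref{id}, $I\in\ga_0$. Hence
\[
P=(\a-\b)^{-1}\bigl(a_ia_i^\dag-\b I\bigr)\in\ga_0 \quad\text{and}\quad Q=I-P\in\ga_0,
\]
so every word $X,\widetilde X$ belongs to $\ga_0$.

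Let $\gs$ be the linear span of all such words. Then $\gs$ contains the generators, since $a_i=a_iP+a_iQ$, and the identity, since $I=P+Q$. It therefore suffices to show that $\gs$ is closed under adjoints and products.

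For adjoints, $(a^\dag_{I}a_{J}P)^*=P\,a^\dag_{J}a_{I}$. Using $a_iP=Qa_i$ and $a_i^\dag P=Qa_i^\dag$ to move $P$ to the right, this becomes $a^\dag_Ja_IP$ or $a^\dag_Ja_IQ$, according to the parity of $|I|+|J|$. The same holds with $Q$ in place of $P$.

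For products, take $a^\dag_Ia_JR\cdot a^\dag_Ka_LR'$ with $R,R'\in\{P,Q\}$. First push $R$ all the way to the right; it becomes $P$ or $Q$ according to parity, and its product with $R'$ is either $0$ or $R'$. Next, contract the middle block $a_Ja^\dag_K$ from the inside out using $a_ja_k^\dag=\d_{j,k}(\a P+\b Q)$. Each factor $\a P+\b Q$ commutes past creators and annihilators up to swapping $P\leftrightarrow Q$, so it can be absorbed into the trailing projection as a scalar $\a$ or $\b$. The result is a scalar multiple of a single normally ordered word times $P$ or $Q$, or it is $0$. Hence $\gs=\ga_0$.

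\emph{Linear independence.} Suppose a finite combination $\sum c_{I,J}\,a^\dag_Ia_JP+\sum d_{L,K}\,a^\dag_La_KQ$ vanishes. Fix an index $s$ that appears in none of the finitely many multi-indices involved.

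Treat the $P$-terms first. Choose $J_0$ of minimal length among those $J$ with some $c_{I,J}\neq0$. Apply the operator to
\[
\xi=e_{J_0}\otimes e_s^{\otimes t},\qquad t\in\{0,1\}\ \text{chosen so that } |\xi| \text{ is even}.
\]
All $Q$-terms vanish on $\xi$. A term $a^\dag_Ia_JP$ is nonzero on $\xi$ only if $J$ is a prefix of $J_0\,s^t$. Since $J$ does not contain $s$ and $|J|\ge|J_0|$, this forces $J=J_0$. Each surviving term gives $\kappa\, e_I\otimes e_s^{\otimes t}$, where $\kappa>0$ is a product of $\a$'s and $\b$'s that depends only on $|J_0|$. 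Tensors $e_I\otimes e_s^{\otimes t}$ with distinct $I$ are mutually orthogonal and nonzero in $\cf(\ch,\a,\b)$, so $c_{I,J_0}=0$ for all $I$, contradicting the choice of $J_0$. Hence all $c_{I,J}=0$.

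The $Q$-terms are handled in the same way, choosing $t$ so that $\xi$ has odd length.

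I expect the main care to be needed in two places. One is the bookkeeping in the product computation, where one must check that each contraction really yields a scalar times $P$ or $Q$ and introduces no extra projections in the middle of the word. The other is the role of the assumption $\a\neq\b$ in placing $P$ inside $\ga_0$.
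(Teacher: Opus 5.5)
Your proof is correct. It follows the paper's overall plan: spanning via \eqref{crperiod}, and independence by evaluating on elementary tensors. Your independence step, however, is organized differently, and that organization is what makes it airtight.

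The paper tests each word $X_l$ against its own pair $\xi_l,\eta_l$ and asserts $\langle Y\xi_l,\eta_l\rangle=\delta_l\langle X_l\xi_l,\eta_l\rangle$. This pairing has two problems.
\begin{itemize}
\item It is not diagonal in general. For $X_l=a^\dag_{j_1}a_{j_1}a_{j_2}P$ one has $\xi_l=e_{j_2}\otimes e_{j_1}$ and $\eta_l=e_{j_1}$, and the shorter word $a_{j_2}P$ also contributes $\alpha\beta\neq0$.
\item The words $a^\dag_Ia_JP$ with $|J|$ odd, and $a^\dag_Ia_JQ$ with $|J|$ even, are never detected, because the projection kills the corresponding test vector.
\end{itemize}
Your choice of $J_0$ of minimal length makes the evaluation triangular. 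Padding with a fresh $e_s^{\otimes t}$ fixes the parity without producing new contributions.

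You also spell out what the paper compresses into one sentence: $P=(\alpha-\beta)^{-1}(a_ia_i^\dag-\beta I)\in\ga_0$, and closure of the span under products and adjoints.

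Your hypothesis $\alpha\neq\beta$ is necessary, not merely convenient. For $\alpha=\beta$, take a putative normally ordered expression for $P$ and evaluate it on $e_s\otimes e_s$ and on $e_s$, with $s$ fresh. This shows $P\notin\ga_0$, so the statement fails. The paper assumes $\alpha\neq\beta$ tacitly (cf.\ \eqref{ucr2}).

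One cosmetic point: for $J_0=(j_1,\dots,j_m)$, write $e_{J_0}=e_{j_m}\otimes\cdots\otimes e_{j_1}$. Then $a_{j_1}\cdots a_{j_m}$ acts nontrivially, and ``prefix'' refers to the reversed multi-index.
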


\begin{proof}
Let $L$ denote the index set such that $(Y_l)_{l \in L}:=(X_l,\widetilde{X}_l)_{l \in L}$ consists of all words of the form \eqref{hambas}, and consider
\[
Y := \sum_{l \in F} \d_l X_l + \l_l \widetilde{X}_l,
\]
where $F \subset L$ is finite and $\d_k,\l_l$ are complex scalars. From \eqref{crperiod}, the set $(Y_l)_{l \in L}$ generates $\ga_0$. Assume that $Y=0$. We may assume that the identity $I$ does not appear in $Y$. Otherwise, denoting by $l_0$ the index such that $Y_{l_0}=I$, one can take a unit vector $e_k$ in the kernel of each word in $Y$ ending with an annihilator. Thus, for $l\neq l_0$, the vector $Y_l e_k$ is orthogonal to $e_k$ whenever it is nonzero, and consequently $\l_{l_0} = \langle Y e_k, e_k \rangle = 0$. Similarly, $\d_{l_0}=\langle Y (e_k\otimes e_k), e_k\otimes e_k \rangle = 0$. \\
Now, for any $l \in L$, write explicitly
\[
X_l = a^\dag_{i_{l_1}} \cdots a^\dag_{i_{l_n}} a_{j_{l_1}} \cdots a_{j_{l_m}} P, \qquad
\widetilde{X}_l = a^\dag_{i_{l_1}} \cdots a^\dag_{i_{l_n}} a_{j_{l_1}} \cdots a_{j_{l_m}} Q,
\]
and choose vectors
\[
\xi_l := e_{j_{l_m}} \otimes\cdots \otimes e_{j_{l_1}}, \qquad
\eta_l := e_{i_{l_1}} \otimes \cdots \otimes e_{i_{l_n}},
\]
with the convention that $\xi_l=\Omega$ if $m=0$ and $\eta_l=\Omega$ if $n=0$. Then, depending on whether $m = 2r$ or $m = 2r+1$, we have
\begin{align*}
0 = \langle Y \xi_l, \eta_l \rangle &=
\begin{cases}
\d_l \langle X_l \xi_l, \eta_l \rangle, & \text{if } m = 2r,\\
\d_l \langle \widetilde{X}_l \xi_l, \eta_l \rangle, & \text{if } m = 2r+1,
\end{cases}\\
&=
\begin{cases}
\a^r \b^r \l_n \d_l, & \text{if } m = 2r,\\
\a^{r+1} \b^r \l_n \d_l, & \text{if } m = 2r+1.
\end{cases}
\end{align*}
Since $\a,\b>0$ and $\l_n>0$ by \eqref{lambda}, it follows that $\d_l = 0$ in all cases.
\end{proof}
The projection onto the vacuum is denoted by $P_\Omega$. As shown below, it does not belong to $\ga$.
\begin{prop}\label{novacuum}
The $C^*$-algebra $\ga$ does not contain $P_\Omega$.
\end{prop}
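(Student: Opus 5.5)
We argue by contradiction: assume $P_\Omega\in\ga$. Then there is $Y\in\ga_0$ with $\|Y-P_\Omega\|<1/4$. By Proposition~\ref{basis} we write $Y$ as a finite linear combination of the basis words $X_l$ and $\widetilde X_l$ in \eqref{hambas}. We then test $Y$ against vectors of the form $e_k$ and $e_k\otimes e_k$, where $k$ is a large index, in the spirit of the proof of Proposition~\ref{basis}. We choose $k$ larger than every index appearing in the finitely many words of $Y$.

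First we evaluate $Y$ on the vacuum and on these test vectors.
\begin{itemize}
\item Every word that contains an annihilator kills $\Omega$. It also kills $e_k$ and $e_k\otimes e_k$, because $a_{j}e_k=0$ whenever $j\neq k$.
\item Words with no annihilator have the form $a^\dag_{i_1}\cdots a^\dag_{i_n}P$ or $a^\dag_{i_1}\cdots a^\dag_{i_n}Q$.
\end{itemize}
Let $c_l$ denote the coefficients of the pure-creator words ending in $P$, and $d_l$ those of the words ending in $Q$. Put $Z_P:=\sum_l c_l\, a^\dag_{i^l}$ and $Z_Q:=\sum_l d_l\, a^\dag_{i^l}$, where $a^\dag_{i^l}$ abbreviates the product of creators in the $l$-th word. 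Then
\[
Y\Omega = Z_P\Omega,\qquad Y(e_k\otimes e_k)=Z_P(e_k\otimes e_k),\qquad Ye_k = Z_Q e_k .
\]

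Next we compare with $P_\Omega$ at the vacuum. We have $\|(Y-P_\Omega)\Omega\|<1/4$, and $Z_P\Omega$ decomposes orthogonally into its degree-$0$ component and its higher components. Hence the constant coefficient $c_\emptyset$ (the coefficient of the word $P$) satisfies $|c_\emptyset-1|<1/4$. The remaining part $\sum_{n\ge1}$ of $Z_P\Omega$ has norm less than $1/4$.

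Finally we compare at $e_k\otimes e_k$. Since $P_\Omega(e_k\otimes e_k)=0$, we need $\|Z_P(e_k\otimes e_k)\|<\tfrac14\|e_k\otimes e_k\|=\tfrac14\sqrt{\a\b}$. Because $k$ does not occur among the creator indices, the vectors $a^\dag_{i^l}(e_k\otimes e_k)=e_{i^l}\otimes e_k\otimes e_k$ for distinct words are mutually orthogonal. The same holds for $a^\dag_{i^l}\Omega=e_{i^l}$. Moreover, their norms are related by
\[
\|e_{i}\otimes e_k\otimes e_k\|^2=\l_{n+2}=\a\b\,\l_n=\a\b\,\|e_{i}\|^2,
\]
using \eqref{lambda}, where $n$ is the length of the word. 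Therefore
\[
\|Z_P(e_k\otimes e_k)\|^2=\a\b\,\|Z_P\Omega\|^2 .
\]
Since $\|Z_P\Omega\|\ge|c_\emptyset|>3/4$, this gives $\|Z_P(e_k\otimes e_k)\|>\tfrac34\sqrt{\a\b}$, contradicting the requirement above. Hence $P_\Omega\notin\ga$.

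We do not even need to look at $e_k$ and $Z_Q$. The main point to verify carefully is the periodicity identity $\l_{n+2}=\a\b\,\l_n$ from \eqref{lambda}, which makes the map $\xi\mapsto \xi\otimes e_k\otimes e_k$ a scaled isometry compatible with the creators. The orthogonality also needs checking: the creation words with indices different from $k$ produce pairwise orthogonal tensors, or at least $Z_P$ acts "the same" on $\Omega$ and on $e_k\otimes e_k$ up to the factor $\sqrt{\a\b}$. This holds because left tensoring commutes with appending $e_k\otimes e_k$ on the right.
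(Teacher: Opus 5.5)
Your argument is correct and is essentially the paper's proof: both evaluate an approximant from $\ga_0$ on $\Omega$ and on an even degree-two vector with fresh indices (the paper uses $e_m\otimes e_{m+1}$). Both note that words containing annihilators vanish there, and both conclude that the coefficient of $P$ must be simultaneously close to $1$ and close to $0$. The only difference is minor: the paper simply reads off the component of the image along the test vector, which is $c_\emptyset$ times that vector, so your scaling identity $\lambda_{n+2}=\alpha\beta\,\lambda_n$ and the full-norm comparison are not actually needed.
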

\begin{proof}
We argue by contradiction. Suppose $P_\Omega \in \ga$. Then, for any $\eps>0$, there exists $X_\eps \in \ga_0$ such that $\|X_\eps - P_\Omega\| < \eps$. By Proposition \ref{basis}, each word in $\ga_0$ can be put in normal order; thus, for a finite set $F \subset \bn$,
\[
X_\eps = \sum_{l \in F} a^\dag_{i_{l_1}^\eps} \cdots a^\dag_{i_{l_n}^\eps} a_{j_{l_1}^\eps} \cdots a_{j_{l_m}^\eps} (\d_l^\eps P + \l_l^\eps Q),
\]
where $\d_l^\eps$ and $\l_l^\eps$ are complex numbers. Note that $X_\eps$ contains $I$. Indeed, otherwise $\eps>\|(X_\eps-P_\Om)\Om\|=\|\xi_\eps-\Om\|$, where $\xi_\eps\perp \Om$, which yields the contradiction $\eps>1$.\\
Let $l_0\in F$ be the index corresponding to $n=m=0$. Then $\|(X_\eps - P_\Omega) \Omega\| = \left\|  \d_{l_0}^\eps \Omega - \Omega + \xi_\eps \right\|$,
where $\xi_\eps$ is orthogonal to $\Omega$. Hence,
\begin{equation}\label{absurd1}
\left| 1 - \d_{l_0}^\eps \right| < \eps.
\end{equation}
On the other hand, let $m$ be larger than all indices of creators and annihilators appearing in $X_\eps$. Applying $X_\eps - P_\Omega$ to $e_m \otimes e_{m+1}$ yields $\| \d_{l_0}^\eps (e_m \otimes e_{m+1}) + \eta_\eps \| < \eps$,
where $\eta_\eps \perp e_m \otimes e_{m+1}$. Therefore, $|\d_{l_0}^\eps| < \eps$,
contradicting \eqref{absurd1}.
\end{proof}

\begin{prop}\label{irre}
The $C^*$-algebra $\ga$ is irreducible.
\end{prop}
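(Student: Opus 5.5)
To prove that $\ga$ is irreducible, we will show that its commutant $\ga'$ is trivial. The key tool is the vacuum projection. Although Proposition~\ref{novacuum} says $P_\Om\notin\ga$, it does lie in the weak closure $\ga''$.

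We first exhibit $P_\Om$ as a strong limit of elements of $\ga_0$. Since $a_ia_j^\dag=\d_{i,j}(\a P+\b Q)$ and $\a P+\b Q$ is invertible in $\ga_0$ (it commutes with $N$, and $P,Q\in\ga_0$ by \eqref{id}-type identities), we can form
\[
\frac1\a a_i^\dag a_i P+\frac1\b a_i^\dag a_i Q .
\]
This operator is the projection $E_i$ onto the closed span of the vectors whose first tensor factor is $e_i$. Indeed, on $\ch_n$ the operator $a_i^\dag a_i$ acts as $\om_n$ times the projection onto $e_i\otimes\ch^{\otimes(n-1)}$, so dividing by $\a$ on odd levels and by $\b$ on even levels, via $P$ and $Q$, normalizes it. The exact bookkeeping of which weight goes with $P$ and which with $Q$ is a routine check using \eqref{crperiod}. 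The projections $E_i$ are mutually orthogonal, and $\sum_i E_i = I-P_\Om$ strongly. Hence $P_\Om = I-\sum_i E_i$ belongs to the strong closure of $\ga_0$, so $P_\Om\in\ga''$.

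Next, take $T\in\ga'$. Then $T$ commutes with $P_\Om$, so $T\Om=c\,\Om$ for some scalar $c$. Since $\Om$ is cyclic for $\ga$, for every $A\in\ga$ we get $TA\Om=AT\Om=cA\Om$. Therefore $T=cI$, so $\ga'=\bc I$ and $\ga$ is irreducible.

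The main obstacle is verifying that $P$ and $Q$, and hence $(\a P+\b Q)^{-1}$, belong to $\ga_0$. We will handle this as follows. By \eqref{crperiod}, $a_ia_i^\dag=\a P+\b Q$ lies in $\ga_0$, and $I\in\ga_0$ by \eqref{id}. If $\a\neq\b$, then $P$ and $Q$ are linear combinations of $a_ia_i^\dag$ and $I$. If $\a=\b$, we do not need $P$ and $Q$ separately, since then $E_i=\a^{-1}a_i^\dag a_i$ directly. The only remaining routine check is the level-by-level normalization of $E_i$.
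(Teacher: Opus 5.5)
Your argument is correct and is essentially the paper's. The paper also gets $P_\Om\in\ga''$ from the strongly convergent sum $\sum_j a_j^\dag a_j$, packaged as the single identity $\b P_\Om=(\a+\b)I-a_ia_i^\dag-\sum_j a_j^\dag a_j$ (which avoids isolating $P,Q$ and treating $\a=\b$ separately), and then concludes from the cyclicity of $\Om$ exactly as you do.

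One slip in the bookkeeping you deferred: your weights are swapped, and as written your $E_i$ is not a projection when $\a\neq\b$. On $\ch_n$ the operator $a_i^\dag a_i$ acts as $\om_n$ times the projection onto $e_i\otimes\ch^{\otimes(n-1)}$, with $\om_n=\b$ for even $n$ and $\om_n=\a$ for odd $n$. So the correct normalization is $E_i=\b^{-1}a_i^\dag a_iP+\a^{-1}a_i^\dag a_iQ$.
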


\begin{proof}
We prove that $\ga' = \bc I$. First, note that $P_\Omega \in \ga''$. Indeed, for any $i \in \bn$ and $\xi \in \cf(\ch,\l_n) \ominus \bc \Omega$, \eqref{period} gives
\[
\left(a_i a_i^\dag + \sum_{j \in \bn} a_j^\dag a_j \right) (\xi \oplus \Omega) = (\a + \b) \xi \oplus \a \Omega,
\]
so that
\[
\b P_\Omega = (\a + \b) I - a_i a_i^\dag - \sum_{j \in \bn} a_j^\dag a_j.
\]
If $b \in \ga'$, then $b \Omega = b P_\Omega \Omega = P_\Omega b \Omega = \langle \Omega, b \Omega \rangle \Omega$. Since $\Omega$ is cyclic for $\ga''$, it follows that $b = \t_\Om(b) I$.
\end{proof}
It is possible to provide a space-free construction for the $C^*$-algebra $\ga$. More precisely, for fixed $\a,\b>0$, let $\gb_0$ be the universal unital $*$-algebra generated by a system $\{b_i \mid i \in \bn\}$ realizing the following relations:
\begin{align}
\label{ucr1} b_i b_j^* &= \d_{i,j} b_k b_k^*, & i,j,k \in \bn,\\
\label{ucr2} b_i b_i^* - \b \idd &= \frac{1}{\a-\b} (b_i b_i^* - \b \idd)^2, & i \in \bn,\\
\label{ucr3} \frac{1}{\a-\b} b_j (b_i b_i^* - \b \idd) &= b_j - \frac{1}{\a-\b} (b_i b_i^* - \b \idd) b_j, & i,j \in \bn.
\end{align}
where $\idd$ is the unit of $\gb_0$.
Observe that \eqref{ucr1}-\eqref{ucr2} imply that the self-adjoint element
\begin{equation}\label{pfree}
p := \frac{1}{\a-\b} (b_i b_i^* - \b \idd)
\end{equation}
is an idempotent independent of $i$. Then \eqref{ucr3} can be written simply as
\begin{equation}\label{pfree2}
b_j p = (\idd - p) b_j\,, \quad j \in \bn\,.
\end{equation}
Note that it also yields $(1-p)b_j=b_jp$.

\begin{thm}\label{spacefree}
The universal $C^*$-algebra $\gb$ generated by $\{b_i \mid i \in \bn\}$ satisfying \eqref{ucr1}-\eqref{ucr3} exists and is isomorphic to $\ga.$%the %map
%\[
%\Phi: b_i \in \gb \mapsto a_i \in \ga
%\]
%is an isomorphism.
\end{thm}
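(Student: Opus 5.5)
The plan is to build $\gb$ by bounding the generators, get a surjection $\pi:\gb\to\ga$ with $\pi(b_i)=a_i$, and prove $\pi$ injective by the gauge-invariant uniqueness argument familiar from the Cuntz algebras.

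\textbf{Existence.} Let $b_i$ be the generators in any $*$-representation of $\gb_0$. By \eqref{ucr2} and \eqref{pfree}, $p$ is a self-adjoint projection, so
\[
b_ib_i^*=(\a-\b)p+\b\idd .
\]
The right-hand side has norm at most $\max(\a,\b)$. Hence $\|b_i\|^2\le\max(\a,\b)$ in every $*$-representation, the universal $C^*$-seminorm is finite, and the universal $C^*$-algebra $\gb$ exists.

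\textbf{The surjection.} By \eqref{crperiod}, $a_ia_j^\dag=\d_{i,j}(\a P+\b Q)$. This gives \eqref{ucr1}, and $\frac{1}{\a-\b}(a_ia_i^\dag-\b I)=P$. Since $P$ is a projection, \eqref{ucr2} holds. Relation \eqref{pfree2} is the identity $a_jP=Qa_j$ from \eqref{crperiod}. By universality we get a surjective $*$-homomorphism $\pi:\gb\to\ga$ with $\pi(b_i)=a_i$ and $\pi(p)=P$. The case $\a=\b$, where \eqref{ucr2} degenerates, I would treat separately as the $\mathcal O_\infty$/Toeplitz situation.

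\textbf{Gauge action.} Since the relations are homogeneous, the maps $b_i\mapsto zb_i$, $z\in\bt$, define an action $\gamma$ of $\bt$ on $\gb$; note $p$ is fixed. On $\ga$ the corresponding action is implemented by the unitaries $U_z:=z^{-N}$, that is, $\mathrm{Ad}\,U_z(a_i)=za_i$. Thus $\pi\circ\gamma_z=\mathrm{Ad}U_z\circ\pi$. Averaging over $\bt$ gives faithful conditional expectations $E_\gb$ and $E_\ga$ onto the fixed-point algebras $\gb^\bt$ and $\ga^\bt$, and $\pi\circ E_\gb=E_\ga\circ\pi$. The standard argument then reduces injectivity of $\pi$ to injectivity on $\gb^\bt$. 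Indeed, if $\pi(x)=0$ then $\pi(E_\gb(x^*x))=E_\ga(\pi(x^*x))=0$, so $E_\gb(x^*x)=0$, and faithfulness of $E_\gb$ gives $x=0$.

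\textbf{Injectivity on the core.} $\gb^\bt$ is the closed span of the words $b_{I}^*b_{J}p$ and $b_{I}^*b_{J}(\idd-p)$ with $|I|=|J|$, as in Proposition~\ref{basis}. For $k,n\in\bn$, let $\gc_{k,n}$ be the span of such words with all indices in $[k]$ and $|I|=|J|\le n$.
\begin{itemize}
\item \emph{Closure under products.} To see that $\gc_{k,n}$ is an algebra, reduce a product $b_J b_{I'}^*$ using \eqref{ucr1} and \eqref{pfree2}. The result is either a scalar combination of $p,\idd-p$ times a shorter word, or zero, and balanced lengths are preserved. Hence $\gc_{k,n}$ is a finite-dimensional $*$-subalgebra, and therefore a $C^*$-algebra.
\item \emph{Isometry on each piece.} Proposition~\ref{basis} says the images under $\pi$ of the spanning words are linearly independent. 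So $\pi$ is algebraically injective on $\gc_{k,n}$. An injective $*$-homomorphism of a finite-dimensional $C^*$-algebra is isometric.
\item \emph{Conclusion.} The union $\bigcup_{k,n}\gc_{k,n}$ is dense in $\gb^\bt$, so $\pi$ is isometric on $\gb^\bt$. Together with the gauge step, $\pi$ is injective, and $\gb\cong\ga$.
\end{itemize}

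\textbf{Main obstacle.} The delicate step is the bookkeeping showing that $\gc_{k,n}$ is closed under multiplication and spans the core. In particular one must track how $p$ commutes past the $b$'s, which by \eqref{pfree2} flips $p\leftrightarrow\idd-p$ according to parity. One must also check that the spanning set of $\gb_0$ reduces to exactly the normal-ordered words of Proposition~\ref{basis}, so that linear independence transfers.
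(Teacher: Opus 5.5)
Your proposal is correct and follows essentially the same route as the paper: bound $\|b_i\|^2\le\max(\a,\b)$ in every representation for existence, get the surjection $b_i\mapsto a_i$ from \eqref{crperiod}, and use the gauge action together with the finite-dimensional balanced-word $*$-subalgebras of the fixed-point algebra, on which injectivity (hence isometry) comes from the linear independence in Proposition~\ref{basis}. The only difference is cosmetic and lies in the last step. You conclude via $\pi(x)=0\Rightarrow E_\gb(x^*x)=0$ and faithfulness of $E_\gb$, whereas the paper combines the norm formula $\|b\|_{\max}=\lim_n\|T((b^*b)^n)\|^{1/2n}$ with $\widetilde T\circ\Phi_0=\Phi_0\circ T$ to get $\|b\|_{\max}\le\|\Phi_0(b)\|$ directly; both are standard forms of the same gauge-invariant uniqueness argument.
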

\begin{proof}
Consider the maximal $C^*$-seminorm on $\gb_0$ given by   $\|\cdot\|_{\max}:=\sup\{\|\pi(\cdot)\|\mid \pi\, \text{$*$-representation}\}$.
Since \eqref{ucr1} and \eqref{pfree} are equivalent to
\begin{equation}\label{ucr5}
b_ib^*_j=\d_{i,j}(\a p+\b(\idd-p))\,, \quad i,j\,,
\end{equation}
the map $\Phi_0:\gb_0\rightarrow \ga_0$ defined by $\Phi_0(b_i) = a_i$ preserves \eqref{crperiod} and thus extends by universality to a $*$-homomorphism. Pick now the words in $\gb_0$ of type
\begin{equation}\label{hamuni}
x = b^*_{i_{1}} \cdots b^*_{i_{n}} b_{j_{1}} \cdots b_{j_{m}} p, \qquad
\widetilde{x} = b^*_{i_{1}} \cdots b^*_{i_{n}} b_{j_{1}} \cdots b_{j_{m}} (\idd - p),
\end{equation}
with $n,m \geq 0$, $i_1,\ldots,i_n, j_1,\ldots,j_m\in\bn$. As usual, $x=p$ and $\widetilde{x}=\idd-p$ when $n=m=0$. Using \eqref{pfree} and \eqref{pfree2}, one obtains $b_i b_j b_j^* b_i^* = \a \b \idd$ for each $i,j\in\bn$. Therefore, $\Phi_0$ is unital by \eqref{id} and sends $p$ to $P$. This shows that $\Phi_0(x)=X$ and $\Phi_0(\widetilde{x})=\widetilde{X}$, where $X$ and $\widetilde{X}$ are given in \eqref{hambas}.  Hence, linear combinations of words of type $x$ and $\widetilde{x}$ make up a Hamel basis in $\gb_0$. Thus, $\Phi_0$ is injective and $\|\cdot\|_{\max}$ is actually a norm. \\
Now let $\pi$ be a $*$-representation of $\gb_0$. By \eqref{ucr5}, for each $i\in\bn$, $\|\pi(b_i)\|^2=\|\pi(b_ib_i^*)\|= \|\a \pi(p)+ \b\pi(\idd-p)\|=\max\{\a,\b\}$, since $\pi(p)$ and $\pi(\idd-p)$ are orthogonal. Consequently, for each $y\in\gb_0$, there exists a constant $C_y$ such that $\|\pi(y)\|\leq C_y$, and therefore $\|\cdot\|_{\max}$ is finite.\\
The universal $C^*$-algebra $\gb$ is then the completion of $\gb_0$ with respect to the maximal norm, and $\Phi_0$ extends to a $*$-epimorphism $\Phi: \gb \to \ga$.\\
Proving that $\Phi$ is isometric requires more work. We follow standard arguments based on the gauge action.\\
Let $\bt:=\{z\in\bc\mid |z|=1\}$ be the unit circle: For each  $z\in\bt$, define the map $\g_z(b_i):=z b_i$ for each $i\in\bn$. As $\g_z(b_i)$ satisfies \eqref{ucr1}-\eqref{ucr3}, $\g_z$ extends uniquely by universality to a $*$-homomorphism which is also bijective with inverse $\g_{\overline{z}}$. This gives a strongly continuous action of $\bt$ on $\gb$ by $*$-automorphisms. This action is referred to as the gauge action, and the gauge-invariant subalgebra $\gb^\g:=\{b\in\gb\mid \g_z(b)=b,\,\,z\in\bt\}$ is algebraically spanned by the words in the Hamel basis of $\gb_0$ containing the same number of $b_i$ and $b_i^*$. Averaging this action with respect to the Haar-Lebesgue measure yields the map $T$ on $\gb$ such that $b\in\gb\mapsto T(b):=\int_{\mathbb T} \g_z(b) dz$. Note that for each $x$ or $\widetilde{x}$ in \eqref{hamuni}, one has $T(x)=\d_{n,m}x$, $T(\widetilde{x})=\d_{n,m}\widetilde{x}$. More precisely, $T$ is a faithful conditional expectation onto $\gb^\g$, and consequently for any $b\in \gb$, $\|b\|_{\max}=\lim_{n\to \infty} \|T((b^*b)^n)\|_{\max}^{1/2n}$.\\
Similarly, the family of unitaries $U_z$ on the Fock space $\cf(\ch,\a,\b)$ such
that $U_z\lceil_{\ch_n}:=z^n I_{\ch_n}$ for each $z\in\bt$ and
$n\in\bn$, implement a gauge representation $\widetilde{\g}_z$ of
$\bt$ on $\ga$, namely $\widetilde{\g}_z(b):=U_zbU_z^*$, with
$z\in\bt$ and $b\in\gb$. Let us denote by $\widetilde{T}$ the faithful
conditional expectation onto the fixed-point subalgebra
$\widetilde{T}(\cdot):=\int_{\bt}\widetilde{\g}_z(\cdot)dz$. As the
two actions described above are equivariant, one has
\begin{equation}\label{equi}
\widetilde{T}\circ \Phi_0=\Phi_0\circ T\,.
\end{equation}
Now fix $k$ and let $\mathcal{A}_k$ be the linear span of words as in \eqref{hamuni} for which $n=m$, $i_1,\ldots i_n,j_1,\ldots,j_n\leq k$, and $0\leq n\leq k$. Each $\mathcal{A}_k$ is then finite-dimensional, $\mathcal{A}_k\subset \mathcal{A}_{k+1}$, and for any $b\in\gb_0$, $T(b)\in\mathcal{A}_K$ with $K$ sufficiently large, and $\gb^\g=\overline{\bigcup_k \mathcal{A}_k}$. Moreover, since $\Phi_0$ is injective on $\gb_0$, it is isometric on each finite-dimensional subspace $\mathcal A_k\subset\gb_0$. \\
Finally, for any $b\in \gb_0$,
$$
\|b\|_{\max}=\lim_{n\to \infty} \|T((b^*b)^n)\|_{\max}^{1/2n}=
  \lim_{n\to \infty}\lim_{k\to \infty} \|T((b^*b)^n)\|_{\mathcal A_k}^{1/2n}\,.
$$
But\eqref{equi} gives that
$$
\|T((b^*b)^n)\|_{\mathcal A_k}=\|\Phi_0(T((b^*b)^n))\|
  =\|\tilde T(\Phi_0((b^*b)^n))\|\leq \|\Phi_0(b)\|^{2n}\,
$$
since $\tilde T$ is norm one. We then get $\|b\|_{\max}=\|\Phi_0(b)\|$ for all $b\in \gb_0$, and this extends to $\gb$ by density.
\end{proof}
We end the section by noting that $\gb$ is not a type $I$ $C^*$-algebra. In fact, by Proposition \ref{novacuum} and Proposition \ref{irre}, the Fock representation $\ga$ is irreducible but does not contain the compact operators, which contradicts a characterization of separable type $I$ $C^*$-algebras, see \cite{Gli}.

\section{The von Neumann algebra generated by position operators}\label{vna}
In this section we investigate the von Neumann algebra generated by the position operators associated with period-2 interacting Fock spaces. We prove that this algebra is a type $II_1$ factor. The key steps in the identification of the unique tracial state are the construction of a normal trace on the odd subalgebra and the fact that the vacuum vector is cyclic and separating for the even subalgebra.\\

The self-adjoint part of $\ga''$, namely the von Neumann algebra generated by $\{a_i+a^\dag_i \mid i\in\bn\}$, will be denoted by $\mathcal{S}$. The proof of Proposition \ref{irre} shows that $P_\Om$ belongs to $\ga''$. The next result shows that the identity operator is the unit of $\mathcal{S}$ and that both $P$ and $Q$ belong to it.

\begin{prop}
The von Neumann algebra $\mathcal{S}$ is unital with unit $I$ and contains both the projections $P$ and $Q$.
\end{prop}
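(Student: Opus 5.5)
We need to show that $I\in\mathcal S$ (i.e. the von Neumann algebra generated by the $s_i$ is unital with unit $I$, not some smaller projection) and that $P,Q\in\mathcal S$. The plan is to write a polynomial in the position operators whose value is a combination of $I$ and $P$, using only \eqref{crperiod}.

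Fix $i\neq j$ and expand $s_i^2=a_ia_i+a_ia_i^\dag+a_i^\dag a_i+a_i^\dag a_i^\dag$. By \eqref{crperiod}, $a_ia_i^\dag=\a P+\b Q$, and the cross terms $a_ia_j^\dag$ vanish. I would rather use a sandwich trick that kills the unwanted terms. Compute $s_j s_i^2 s_j$, or more simply $s_i s_j s_j s_i$, and then apply the vacuum-type cancellation. The cleaner route is to use the kernel of annihilators. Consider
\[
s_j s_i s_i s_j=(a_j+a_j^\dag)(a_i+a_i^\dag)^2(a_j+a_j^\dag).
\]
Every monomial containing an adjacent product $a_j^\dag\cdots$ or $a_ia_j^\dag$ must be analysed, which is messy.

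Instead I would exploit that $\mathcal S$ is a von Neumann algebra. Take $T:=\sum_{j\in\bn} s_j^{\,\prime}$-type strong limits. Concretely, let $e_k$ denote the $k$-th orthonormal basis vector and set
\[
R_n:=\frac1n\sum_{j=m+1}^{m+n} s_i s_j^2 s_i .
\]
Write $s_j^2=a_ja_j+a_j^\dag a_j^\dag+(\a P+\b Q)+a_j^\dag a_j$. For distinct large $j$, the operators $a_j^\dag a_j^\dag$, $a_ja_j$ and $a_j^\dag a_j$ average to $0$ in the strong (or weak) operator topology as $n\to\infty$, by orthogonality of the ranges $e_j\otimes e_j\otimes\cdots$ (Pythagoras gives a factor $1/\sqrt n$ for creators). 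The annihilator terms tend strongly to $0$ since $\sum_j\|a_j\xi\|^2\le C\|\xi\|^2$. Hence $\frac1n\sum_j s_j^2\to \a P+\b Q$ weakly, so $\a P+\b Q\in\mathcal S$, without even needing the $s_i$ sandwich.

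Similarly, $s_j^4$ or the product $s_j^2 s_k^2$ averaged over large distinct $j,k$ gives $(\a P+\b Q)^2=\a^2P+\b^2Q$. If $\a\neq\b$, these two elements of $\mathcal S$ form an invertible linear system in $P,Q$, since $\det\begin{pmatrix}\a&\b\\ \a^2&\b^2\end{pmatrix}=\a\b(\b-\a)\neq0$. This yields $P,Q\in\mathcal S$ and hence $I=P+Q\in\mathcal S$. If $\a=\b$, the first limit is already $\a I$, giving $I\in\mathcal S$. Then $P$ is recovered differently: $P$ is then the parity projection, obtained for instance as the spectral projection of $s_i$-words via the Gaussian's vacuum structure. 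The main obstacle is this degenerate case and the rigorous justification of the weak limits, mainly the creator terms. For those I would verify $\|\frac1n\sum_j (a_j^\dag)^2\xi\|^2=\frac{\a\b}{n^2}\sum_j\|\cdots\|^2\le C\|\xi\|^2/n$, using orthogonality of the ranges for distinct $j$. Here I assume the paper's period-2 setting intends $\a\neq\b$ or handles $\a=\b$ by the known free case.
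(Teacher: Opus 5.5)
Your argument is essentially the paper's: you obtain $\alpha P+\beta Q\in\mathcal S$ as a limit of $s_j^2$, and then use $\alpha\neq\beta$ to separate $P$ and $Q$. The paper takes the plain weak limit $j\to\infty$. Your Ces\`aro averaging upgrades this to strong convergence, which is correct but not needed, since $\mathcal S$ is weakly closed. The abandoned sandwich computations and the unused $R_n$ can simply be deleted. Two points in your write-up need correcting.

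First, the option ``$s_j^4$'' fails. Besides $a_ja_j^\dag a_ja_j^\dag=(\alpha P+\beta Q)^2$, the fully contracted word $a_ja_ja_j^\dag a_j^\dag=\alpha\beta I$ also survives in the limit. Hence $s_j^4\to\alpha\beta I+\alpha^2P+\beta^2Q=(\alpha+\beta)(\alpha P+\beta Q)$ weakly, consistent with $\tau_\Omega(s_j^4)=\alpha\beta+\alpha^2$. This limit is proportional to your first element, so the resulting $2\times 2$ system is singular. The $s_j^2s_k^2$ variant does give $(\alpha P+\beta Q)^2$, since $a_ja_k^\dag=0$ for $j\neq k$ kills all cross contractions. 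However, no second limit is needed at all. With $X:=\alpha P+\beta Q\in\mathcal S$, you have $X^2\in\mathcal S$ because $\mathcal S$ is an algebra, and $P=(X^2-\beta X)/(\alpha(\alpha-\beta))$, $Q=(X^2-\alpha X)/(\beta(\beta-\alpha))$. These polynomials have no constant term, so you never need $I\in\mathcal S$ in advance. This is exactly the ``functional calculus'' step in the paper.

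Second, your remark on the case $\alpha=\beta$ is wrong: there $P\notin\mathcal S$. For constant weights, $\mathcal S$ is the free group factor $L(\mathbb F_\infty)$ acting on the (rescaled) full Fock space, and $\Omega$ is cyclic and separating for it. Since $P\Omega=\Omega=I\Omega$, separation would force $P=I$, which is false. So $\alpha\neq\beta$ is a genuine hypothesis of the statement, and the paper invokes it explicitly; it is not a degenerate case to be handled separately. For $\alpha=\beta$ only the assertion $I\in\mathcal S$ survives, because then $s_j^2\to\alpha I$ weakly.
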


\begin{proof}
Fix $i\in\bn$. From \eqref{crperiod}, we have
$$
s_i^2=a_ia_i+(\a P+ \b Q)+a^\dag_i a_i+a^\dag_ia^\dag_i,
$$
which shows that $\a P+ \b Q\in \mathcal{S}$ after taking the weak limit as $i\rightarrow\infty$.
As $\a\neq \b$, by functional calculus, $P, Q\in \mathcal{S}$ and hence $I=P+Q\in \mathcal{S}$.
\end{proof}
For each $n\in\bn$ and $i_1,\ldots, i_n\in\bn$, we introduce the normal (or Wick) ordered words
\begin{align}
\begin{split}\label{wick}
&W(i_1,\ldots, i_n):=\sum_{k=0}^n a^\dag_{i_1}\cdots a^\dag_{i_k}a_{i_{k+1}}\cdots a_{i_n}P,\\
&\widetilde{W}(i_1,\ldots, i_n):=\sum_{k=0}^n a^\dag_{i_1}\cdots a^\dag_{i_k}a_{i_{k+1}}\cdots a_{i_n}Q.
\end{split}
\end{align}
If $n=0$, we denote by $W(\emptyset)$ and $\widetilde{W}(\emptyset)$ the projections $P$ and $Q$, respectively. From now on, we denote by $\bf{W}$ the collection of all normal ordered words defined in \eqref{wick}.\\
We introduce the subalgebras $\mathcal{S}_P:=P\mathcal{S}P$ and $\mathcal{S}_Q:=Q\mathcal{S}Q$, and call them the even and odd parts of $\mathcal{S}$, respectively.

\begin{lem}\label{wickformula0}
The strong closure of the linear span of the words in \eqref{wick} coincides with $\mathcal{S}$. We also have
  \begin{eqnarray}\label{sub}
    \mathcal{S}_P=\overline{\{W(i_1,\ldots, i_{2n})\}}^{w*}=\langle
      W(i_1,i_2),P\rangle^{''}\,, \\\label{sub2} \mathcal{S}_Q=\overline{\{\widetilde W(i_1,\ldots, i_{2n})\}}^{w*}=\langle \widetilde{W}(i_1,i_2),Q\rangle^{''}\,,
\end{eqnarray}
where $n\in\bn$ and $i_1,\ldots,i_n\in\bn$.
\end{lem}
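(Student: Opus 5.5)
The plan is to derive a Wick-type recursion expressing $s_i W(i_1,\ldots,i_n)$ through words of \eqref{wick}, and then to argue purely algebraically.

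\textbf{The recursion.} Fix $i$ and apply $s_i=a_i+a_i^\dag$ to each summand of $W(i_1,\ldots,i_n)$ in \eqref{wick}.
\begin{itemize}
\item The creator $a_i^\dag$ produces the summands of $W(i,i_1,\ldots,i_n)$ with at least one creator.
\item The annihilator $a_i$ hitting the summand with $k=0$ produces $a_ia_{i_1}\cdots a_{i_n}P$, which is the remaining summand of $W(i,i_1,\ldots,i_n)$.
\item The annihilator $a_i$ hitting a summand with $k\ge1$ creates the product $a_ia^\dag_{i_1}=\d_{i,i_1}(\a P+\b Q)$ by \eqref{crperiod}.
\end{itemize}
In the last case, the factor $\a P+\b Q$ sits to the left of $a^\dag_{i_2}\cdots a^\dag_{i_k}a_{i_{k+1}}\cdots a_{i_n}P$. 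That word maps $\bigoplus_m\ch_{2m}$ into vectors of degree parity $n+1$, independently of $k$. So the factor reduces to a constant $c_n\in\{\a,\b\}$ depending only on the parity of $n$. Collecting terms gives
\begin{equation*}
s_iW(i_1,\ldots,i_n)=W(i,i_1,\ldots,i_n)+\d_{i,i_1}c_nW(i_2,\ldots,i_n),\qquad s_iP=W(i).
\end{equation*}
The same recursion holds for $\widetilde W$, with a constant $\widetilde c_n\in\{\a,\b\}$ and $Q$ in place of $P$. I expect this parity bookkeeping of $c_n,\widetilde c_n$ to be the only delicate point.

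\textbf{First assertion.} Since $P,Q\in\mathcal S$ by the preceding proposition, induction on $n$ with the recursion shows that every word in $\bf W$ lies in $\mathcal S$. Conversely, let $\mathcal L$ be the linear span of $\bf W$. Then:
\begin{itemize}
\item $\mathcal L$ is invariant under left multiplication by every $s_i$, by the recursion.
\item $\mathcal L$ is invariant under right multiplication by $P$ and $Q$, since $WP=W$, $WQ=0$, $\widetilde WQ=\widetilde W$ and $\widetilde WP=0$.
\item $\mathcal L$ contains $I=P+Q$.
\end{itemize}
Hence every monomial $s_{j_1}\cdots s_{j_r}=s_{j_1}\cdots s_{j_r}(P+Q)$ lies in $\mathcal L$. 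So $\mathcal L$ contains the unital $*$-algebra generated by the $s_i$, and its strong closure is $\mathcal S$.

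\textbf{The even part.} Compression by $P$ is strongly continuous, so $\mathcal S_P$ is the strong (equivalently weak$^*$) closure of $P\mathcal LP$. Since $W(i_1,\ldots,i_n)$ maps $P\cf(\ch,\a,\b)$ into vectors of degree parity $n$, we have $PW(i_1,\ldots,i_n)P=W(i_1,\ldots,i_n)$ for $n$ even and $=0$ for $n$ odd. In addition, $PWP=0$ for every $\widetilde W$ in $\bf W$, because $\widetilde W P=0$. This gives the first equality in \eqref{sub}.

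For the second equality, apply the recursion twice:
\begin{equation*}
s_{i_1}s_{i_2}P=W(i_1,i_2)+\d_{i_1,i_2}c_0P,
\end{equation*}
so that
\begin{equation*}
W(i_1,i_2)W(i_3,\ldots,i_{2n})=s_{i_1}s_{i_2}W(i_3,\ldots,i_{2n})-\d_{i_1,i_2}c_0W(i_3,\ldots,i_{2n}),
\end{equation*}
using $PW(i_3,\ldots,i_{2n})=W(i_3,\ldots,i_{2n})$. Expanding $s_{i_1}s_{i_2}W(i_3,\ldots,i_{2n})$ by the recursion yields $W(i_1,\ldots,i_{2n})$ plus a linear combination of $W$'s of lengths $2n-2$ and $2n-4$. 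Induction on $n$ therefore places every $W(i_1,\ldots,i_{2n})$ in the algebra generated by the $W(j_1,j_2)$ and $P$. This gives \eqref{sub}, and \eqref{sub2} follows verbatim with $Q$ and $\widetilde W$.
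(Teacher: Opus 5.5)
Your proposal is correct and follows essentially the same route as the paper: the Wick recursion \eqref{swick1}--\eqref{swick2}, the facts that the span of $\bf W$ contains $I$ and is invariant under left multiplication by the $s_j$, and the same induction on $n$ for the second equality in \eqref{sub}, with your compression-by-$P$ step making the first equality in \eqref{sub} more explicit than the paper does. Two harmless slips: in $s_{i_1}s_{i_2}P=s_{i_1}W(i_2)$ the constant is $c_1=\a$, not $c_0$ (which your recursion never defines), and the parenthetical ``equivalently weak$^*$'' needs the span of the even words to be a $*$-algebra, which your identity for $W(i_1,i_2)W(i_3,\ldots,i_{2n})$ together with $W(i_1,i_2)^*=W(i_2,i_1)$ supplies, so Kaplansky density applies.
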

\begin{proof}
  The vector space generated by $\bf{W}$ is unital, since $I=W(\emptyset)+\widetilde{W}(\emptyset)$. It is invariant under the $*$-operation
  as $W(i_1,\ldots, i_n)^*=W(i_n,\ldots, i_1)$ if $n$ is even and
 $W(i_1,\ldots, i_n)^*=\widetilde W(i_n,\ldots, i_1)$ if $n$ is odd. Moreover, for each $j\in\bn$, we have $s_jP=W(j)$ and $s_jQ=\widetilde{W}(j)$. Therefore, it suffices to show that for each $j\in\bn$ and $i_1,\ldots, i_n\in\bn$, both $s_jW(i_1,\ldots, i_n)$ and $s_j\widetilde{W}(i_1,\ldots,i_n)$ belong to the linear span of $\bf{W}$.\\
To this end, introduce the functions, for $h\in\bn$,
$$
o(h):=\begin{cases}0 &\text{if }h=2m,\\
1&\text{if }h=2m+1,
\end{cases}\qquad
e(h):=\begin{cases}0 &\text{if }h=2m+1,\\
1&\text{if }h=2m,
\end{cases}
$$
and note that from \eqref{crperiod} we obtain
\begin{align}
\begin{split}
\label{swick1}
s_jW(i_1,\ldots, i_n)
&=(a_j+a^\dag_{j}) \sum_{k=0}^n a^\dag_{i_1}\cdots a^\dag_{i_k}a_{i_{k+1}}\cdots a_{i_n}P\\
&=W(j,i_1,\ldots,i_n)+\d_{j,i_1}(\a P+\b Q)W(i_2,\ldots,i_n)\\
&=W(j,i_1,\ldots,i_n)+\d_{j,i_1}\a^{o(n)}\b^{e(n)}W(i_2,\ldots,i_n),
\end{split}
\end{align}
and
\begin{align}
\begin{split}
\label{swick2}
s_j\widetilde{W}(i_1,\ldots, i_n)
&=(a_j+a^\dag_{j}) \sum_{k=0}^n a^\dag_{i_1}\cdots a^\dag_{i_k}a_{i_{k+1}}\cdots a_{i_n}Q\\
&=\widetilde{W}(j,i_1,\ldots,i_n)+\d_{j,i_1}(\a P+\b Q)\widetilde{W}(i_2,\ldots,i_n)\\
&=\widetilde{W}(j,i_1,\ldots,i_n)+\d_{j,i_1}\a^{e(n)}\b^{o(n)}\widetilde{W}(i_2,\ldots,i_n).
\end{split}
\end{align}
We now prove \eqref{sub}. For the first equality, one inclusion is
obvious as $W(i_1,i_2)\in \mathcal{S}_P$. Let us denote
$W_n:=\overline{\{W(i_1,\ldots, i_{2k});k\leq n\}}^{w*}$. Then, from
\eqref{swick1},
$s_{i_1}s_{i_2}W(i_3,...,i_{2n})-W(i_1,...,i_{2n})\in W_{n-1}$. In
particular, $s_{i_1}s_{i_2}P\in \langle W(i_1,i_2),P\rangle^{''}$ and one
concludes that
$W(i_1,...,i_{2n})\in \langle W(i_1,i_2),P\rangle^{''}$ by induction on $n$. Similarly, one gets \eqref{sub2}.
\end{proof}

\begin{rem}
The arguments in the above proof, together with Proposition \ref{basis}, show that the words in $\bf{W}$ form a Hamel basis for the $*$-algebra generated by the position operators.
\end{rem}

\begin{prop}\label{sep}
The vacuum vector $\Om$ is cyclic and separating for $\mathcal{S}_P$ on $\bigoplus_{n=0}^{\infty} \ch_{2n}$.
\end{prop}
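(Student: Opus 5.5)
Cyclicity is immediate from the Wick words, and for separation I will build a commutant of $\mathcal{S}_P$ for which $\Om$ is cyclic, using right-acting position operators on $P\cf(\ch,\a,\b)$.

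\emph{Cyclicity.} By \eqref{swick1}, $W(i_1,\ldots,i_{2n})\Om=e_{i_1}\otimes\cdots\otimes e_{i_{2n}}$, since only the $k=2n$ term survives on $\Om$. Lemma \ref{wickformula0} gives $W(i_1,\ldots,i_{2n})\in\mathcal{S}_P$. These vectors span a dense subspace of $\bigoplus_n\ch_{2n}$, so $\Om$ is cyclic for $\mathcal{S}_P$.

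\emph{Separation.} It suffices to show that $\Om$ is cyclic for $\mathcal{S}_P'$, the commutant taken on $P\cf(\ch,\a,\b)$. I will use right creation and annihilation operators $b^\dag_i\,\xi:=\xi\otimes e_i$, with $b_i$ its adjoint, which removes the last tensor factor with weight $\om_n$. They change parity, so I will work with the even-degree pairs $R_{ij}:=P(b_i+b_i^\dag)(b_j+b_j^\dag)P$ and compare them with the left pairs $s_ks_lP$.

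The key check is that each $R_{ij}$ commutes with each $s_ks_lP$ on even vectors. On elementary tensors one expands both products; there are two cases.
\begin{enumerate}
\item When the left and right operators act on different ends of the tensor, they commute up to weights. The weights depend only on the degree, and the degree parity is preserved by pairs, so the weights agree.
\item When the tensor is short (degree $0$, $1$ or $2$), left and right operators touch the same factor. Here one must compare the coefficients $\a,\b$ explicitly.
\end{enumerate}
The second case is the main obstacle. The weights may spoil exact commutation, and then I would renormalize the right operators by a degree-dependent scalar, e.g.\ $\tilde b_i:=b_i\,\om_{N}^{\pm1/2}$-type corrections, chosen so that the right operators satisfy the mirrored relations \eqref{crperiod}.

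Granting commutation, $R_{i_1i_2}\cdots\Om$ produces all even tensors $e_{i_{2n}}\otimes\cdots\otimes e_{i_1}$ plus lower-order terms. By induction on degree these span a dense set, so $\Om$ is cyclic for $\mathcal{S}_P'$ and hence separating for $\mathcal{S}_P$.

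\emph{Fallback.} If commutation fails, I would instead show that $\t_\Om$ is tracial on $\mathcal{S}_P$ by comparing $\langle\Om,W(\mathbf i)W(\mathbf j)\Om\rangle$ with $\langle\Om,W(\mathbf j)W(\mathbf i)\Om\rangle$ via \eqref{swick1}. Both reduce to reversed-index pairings with the same powers of $\a\b$, because pairs are absorbed two levels at a time. A faithful normal trace vector is then separating: if $x\Om=0$, then $\t_\Om(x^*x)=0$, and $\t_\Om(x^*x\,y^*y)=\|xy^*\Om\|^2$ vanishes by traciality for all $y$; cyclicity then gives $x=0$.
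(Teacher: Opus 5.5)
Your main route is the paper's own argument, but it stops at the one step that carries all the content. Your fallback, however, is a complete and genuinely different proof, so the proposal does establish the statement.

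\emph{The main route.} Your right operators are $b_i^\dagger=Ja_i^\dagger J$ and $b_i=Ja_iJ$ for the reversal map $J$ of the paper. Hence $R_{ij}=JW(i,j)J+\alpha\delta_{i,j}P$ and $s_ks_lP=W(k,l)+\alpha\delta_{k,l}P$. Your commutation claim is therefore exactly the paper's claim that $W(h,l)$ and $JW(i,j)J$ commute on even vectors. You leave this check, the only real content of the route, as ``granting'', so as written the main route is incomplete.

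Your worry about the weights is also misplaced. No renormalization is needed: the $b_i$ already satisfy the mirrored relations $b_ib_j^\dagger=\delta_{i,j}(\alpha P+\beta Q)$. Let $\ell_h,\ell_h^*$ and $r_i,r_i^*$ denote the unweighted full-Fock left and right operators. On even vectors, $s_hs_lP$ acts as $\alpha\beta\,\ell_h^*\ell_l^*+\alpha\delta_{h,l}+\beta\,\ell_h\ell_l^*+\ell_h\ell_l$, and $R_{ij}$ is its mirror image in the $r_i,r_i^*$ with the same four constants. The only obstruction is the full-Fock defect $[\ell_h^*,r_i]=\delta_{h,i}P_\Omega=-[\ell_h,r_i^*]$. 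It contributes only on degrees $0$ and $2$, and there the terms cancel in pairs because the left and right coefficients coincide. On $\Omega$ the $\alpha\beta$-terms cancel against each other, as do the $\beta$-terms; on degree $2$ the $\alpha\beta^2$-terms cancel. This is the paper's explicit $n=0,1$ computation, and it is what you would need to supply.

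\emph{The fallback.} Since $W(\mathbf i)^*=W(\mathbf i^{\mathrm{rev}})$ for words of even length and $W(\mathbf i)\Omega=e_{\mathbf i}$, you get directly that $\tau_\Omega(W(\mathbf i)W(\mathbf j))$ equals $(\alpha\beta)^n$ if $\mathbf j=\mathbf i^{\mathrm{rev}}$ has length $2n$, and $0$ otherwise. This is symmetric in $\mathbf i,\mathbf j$, and \eqref{swick1} is not needed. Traciality then passes to all of $\mathcal S_P$ by bilinearity, normality of $\tau_\Omega$ and the weak-$*$ density \eqref{sub}. You should state this extension step explicitly.

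Your argument that a cyclic tracial vector is separating is correct. It uses only traciality and cyclicity; faithfulness is its conclusion, not a hypothesis.

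\emph{Comparison with the paper.} The paper first builds the commuting copy $J\mathcal S_PJ\subseteq\mathcal S_P'$ by hand, and then in Proposition \ref{evenfactor} derives traciality of $\tau_\Omega$ from it. Your fallback reverses this order. It gets traciality essentially for free from the Wick basis and separation as a formal consequence, avoiding the low-degree case analysis entirely. It also yields the traciality part of Proposition \ref{evenfactor} along the way. What the paper's route gives in exchange is an explicit piece of the commutant, implemented by $J$.
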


\begin{proof}
First, observe that  for each $i_1,...,i_{2n}\in\bn$,
\[
W(i_1,...,i_{2n})\Om= e_{i_1}\otimes \cdots \otimes e_{2n},
\]
and \eqref{sub} ensures that $\Om$ is cyclic, since the vacuum vector is invariant under $P$. Therefore, to complete the proof, it suffices to show that $\Om$ is cyclic for the commutant $\mathcal{S}_P'$. To this end, we introduce the conjugation $J:\cf(\ch,\a,\b)\rightarrow \cf(\ch,\a,\b)$ defined by $J\Om=\Om$ and, for each $n\in\bn$ and $i_1,\ldots,i_n\in\bn$,
\[
J(e_{i_1} \otimes \cdots \otimes e_{i_{n}}):= e_{i_{n}}\otimes \cdots \otimes e_{i_1}.
\]
Notice that $J$ commutes with both $P$ and $Q$.\\
We claim that, for  $h,l,i,j\in\bn$, the operators $W(h,l)$
and $JW(i,j)J$  commute on $\bigoplus_{n=0}^{\infty} \ch_{2n}$.
By density, we need to check that
$$JW(i,j)JW(h,l) \xi=W(h,l)JW(i,j)J \xi,$$
for $\xi=e_{k_1}\otimes \cdots\otimes e_{k_{2n}}$, $n\geq 0$, $k_1,\ldots,k_{2n}\in \bn$.
To simplify the notation, we write $e_{k_1,\ldots,k_{2n}}$ instead of $e_{k_1}\otimes \cdots\otimes e_{k_{2n}}$.\\
For $n=0$, one has
\begin{align*}
JW(i,j)JW(h,l)\Omega&= J W(i,j) e_{l, h}    \\
&=\a\b \delta_{l,j}\delta_{i,h} \Omega+ \b \delta_{l,j}
e_{h,i}+ e_{h,l,j,i},
\end{align*}
and
\begin{align*}
W(h,l)JW(i,j)J\Omega&=  W(h,l) e_{j, i}\\
&=\a\b\delta_{l,j}\delta_{h,a} \Omega+ \b \delta_{b,j}
e_{h,i}+ e_{h,l,j,i}\,.
\end{align*}
Similarly, for $n=1$, using the periodicity of $(\om_n)_n$,
\begin{align*}
A&:=JW(i,j)JW(h,l)e_{k_1,k_2}\\
&= J W(i,j)
    \big(\delta_{k_1,l}\delta_{k_2,h}\a\b \Omega +
    \delta_{k_1,l}\b e_{k_2,h}+e_{k_2,k_1,l,h})\,.
\end{align*}
Thus,
\begin{align*}
%\begin{split}
A=&\delta_{k_1,l}\delta_{k_2,h}\a\b e_{j,i}+
\delta_{k_1,l}\delta_{k_2,j}\delta_{i,h}\a\b\Omega+
\delta_{k_1,h}\delta_{k_2,j}\b^2e_{h,i}\\
&+\delta_{k_1,l}\b e_{h,k_2,j,i}+ \delta_{j,k_2}\delta_{i,k_1}\a\b
    e_{h,l}+ \delta_{k_2,j}\b e_{h,l,k_1,i} +
      e_{h,l,k_1,k_2,j,i}.
\end{align*}
The other way round,
\begin{align*}
B&:=W(h,l)JW(i,j)Je_{k_1,k_2}\\
&=  W(h,l)
    (\delta_{k_2,j}\delta_{k_1,i}\a\b \Omega +
    \delta_{k_2,j}\b e_{k_1,i}+e_{k_1,k_2,j,i})\,.
\end{align*}
Since
\begin{align*}
B=&\delta_{k_2,j}\delta_{k_1,i}\a\b e_{h,l}+
\delta_{k_2,j}\delta_{l,k_1}\delta_{h,i}\a\b\Omega+
       \delta_{k_2,j} \delta_{l,k_1}\b^2e_{h,i}\\
       &+\delta_{k_2,j}\b e_{h,l,k_1,i}+
\delta_{k_1,l}\delta_{k_2,h}\a\b e_{j,i}+\delta_{k_1,l}
       \b e_{h,k_2,j,i}+ e_{h,l,k_1,k_2,j,i}\,,
\end{align*}
one gets that $A=B$.
   The computation for $n\geq 2$ is easier, since the operators
   $W(h,l)$ act only on the first two tensors $e_{k_1}\otimes e_{k_2}$ whereas
$JW(i,j)J$ on the last two $e_{k_{n-1}}\otimes e_{k_n}$. We conclude thanks to the periodicity of $(\omega_l)_l$.\\
By \eqref{sub}, this commutation implies that $J\mathcal{S}_PJ\subseteq \mathcal{S}'_P$. Since ${S}'_P\Om=J\mathcal{S}_P\Om$, the cyclicity of $\Om$ for $\mathcal{S}'_P$ follows from that of $\Om$ for $\mathcal{S}_P$.
\end{proof}
\begin{rem} As $Q \Omega=0$, the vacuum vector is not separating for $\mathcal S$.
\end{rem}
\begin{prop}\label{evenfactor}
$\mathcal{S}_P$ is a type $II_1$ factor, with trace given by the vacuum expectation.
\end{prop}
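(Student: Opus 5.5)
Three things have to be shown for $\mathcal{S}_P$, acting on $\bigoplus_{n}\ch_{2n}$: the vacuum state $\t_\Om$ is a faithful normal trace on it, it is a factor, and it is infinite dimensional. Normality of $\t_\Om$ is automatic, since it is a vector state, and faithfulness follows from Proposition \ref{sep}: $\Om$ is separating for $\mathcal{S}_P$. Infinite dimensionality is immediate, because the vectors $W(i_1,i_2)\Om=e_{i_1}\otimes e_{i_2}$ are linearly independent. Once the trace property and factoriality are in hand, $\mathcal{S}_P$ is a finite factor with a faithful normal tracial state and infinite dimension, hence of type $II_1$.

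\textbf{Trace property.} By \eqref{sub} and normality, it suffices to check $\t_\Om(xy)=\t_\Om(yx)$ for $x,y$ in the weak$^*$-dense span of the words $W(i_1,\ldots,i_{2n})$. Using the conjugation $J$ from the proof of Proposition \ref{sep}, I will show that
\[
JW(i_1,\ldots,i_{2n})J\,\Om=W(i_{2n},\ldots,i_1)\Om=W(i_1,\ldots,i_{2n})^*\Om .
\]
Since $J\mathcal{S}_PJ\subseteq\mathcal{S}_P'$, for $x$ in the span this gives $Jx\Om=x^*\Om$ (with $J$ antilinear), and by density and normality it extends to all $x\in\mathcal{S}_P$. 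Hence $x^*\Om=Jx\Om=JxJ\Om$ with $JxJ\in\mathcal{S}_P'$. Then
\[
\t_\Om(xy)=\langle x^*\Om,y\Om\rangle=\langle JxJ\Om,y\Om\rangle=\langle \Om,Jx^*J\,y\Om\rangle=\langle\Om,y\,Jx^*J\Om\rangle=\langle y^*\Om,x\Om\rangle=\t_\Om(yx),
\]
where $Jx^*J$ and $y$ commute because $Jx^*J\in\mathcal{S}_P'$, and the last step uses $Jx^*J\Om=x\Om$. Alternatively, one can check the trace identity directly on words, via Proposition \ref{moment1} and the reversal symmetry of the period-2 weights on even-length products. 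The identity $Jx\Om=x^*\Om$ is in effect the statement that the modular conjugation of $(\mathcal{S}_P,\Om)$ is $J$ and the modular operator is trivial.

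\textbf{Factoriality.} Let $z$ be in the center of $\mathcal{S}_P$. Since $J\mathcal{S}_PJ\subseteq\mathcal{S}_P'$ and $z\in\mathcal{S}_P$, $z$ commutes with $JW(i,j)J$; as a central element it also commutes with every $W(h,l)$. I will show that $z\Om$ is a scalar multiple of $\Om$; faithfulness of the trace, i.e.\ $\Om$ separating, then gives $z\in\bc P$. The natural route copies the free group factor argument: expand $z\Om=\sum c_{\mathbf k}\,e_{\mathbf k}$ over even-length tensors $e_{\mathbf k}$ and use $W(i,i')z\Om=JW(i,i')J\,z\Om$. Away from contractions, the left operator prepends $e_i\otimes e_{i'}$ while the right one appends $e_{i'}\otimes e_{i}$, so this identity forces $c_{\mathbf k}$ to be invariant under moving pairs from one end of the word to the other. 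Square summability then forces $c_{\mathbf k}=0$ for nonempty $\mathbf k$, much as in the proof that $L(\mathbb F_\infty)$ is a factor.

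A cleaner alternative is to use indices $i$ that do not occur in the finitely many leading terms, together with the unitary-like asymptotic freeness: for large distinct indices $i$, the element $W(i,i)/(\a\b)$ behaves asymptotically like a free semicircular-type element, and $\|z\Om-\t_\Om(z)\Om\|$ is controlled by $\|[W(i,i),z]\Om\|$ through an asymptotic orthogonality estimate. I expect this factoriality step to be the main obstacle: the contraction terms carry weights $\a$ or $\b$ depending on parity, so the combinatorics must be tracked carefully, with \eqref{swick1} used to handle the contractions.
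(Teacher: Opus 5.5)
Your trace argument, faithfulness and the type bookkeeping are fine and essentially match the paper. The paper also rests on $J\mathcal{S}_PJ\subseteq\mathcal{S}_P'$: it checks $\tau_\Omega(W(i_1,i_2)x)=\tau_\Omega(xW(i_1,i_2))$ on the generators, whereas you prove the equivalent identity $Jx\Omega=x^*\Omega$. The gap is factoriality, which is the real content of the statement and which you only sketch.

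Your first route does not work as described. The identity should read $W(i,i')z\Omega=JW(i',i)J\,z\Omega$, because $W(i,i')\Omega=e_i\otimes e_{i'}=JW(i',i)J\Omega$. More seriously, the creation parts are not where the information sits. If they forced $c_{\mathbf k}$ to be invariant under moving a pair from one end of the word to the other, the orbits would be finite and square summability would give nothing. Compare coefficients of $e_i\otimes e_{i'}\otimes e_{\mathbf k}$, for $\mathbf k\neq\emptyset$ avoiding the letters $i,i'$. The right-hand side then contributes only through its double contraction, and one gets
\[
c_{\mathbf k}+\beta\,c_{(i',i',\mathbf k)}+\alpha\beta\,c_{(i',i,i,i',\mathbf k)}=\alpha\beta\,c_{(i,i',\mathbf k,i',i)}.
\]
This relation is what kills $c_{\mathbf k}$: send the fresh letters to infinity, and the three longer coefficients tend to $0$ by square summability. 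The word $(i,i',\mathbf k,i',i)$ plays the role of $hgh^{-1}$ in the free group argument. Your second route points at the right estimate but does not prove it, and the asymptotic freeness heuristic is not what makes it work.

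Here is how to close the gap along that second route. For $z\in\mathcal{S}_P$ put $\xi:=z\Omega-\tau_\Omega(z)\Omega\perp\Omega$. Since $W(j,j)\Omega=e_j\otimes e_j=JW(j,j)J\Omega$ and $JW(j,j)J\in\mathcal{S}_P'$, one has $[W(j,j),z]\Omega=W(j,j)\xi-JW(j,j)J\xi$. As $j\to\infty$, $\|a_j\xi\|\to0$ and $\|a_jJ\xi\|\to0$. Hence $W(j,j)\xi-e_j\otimes e_j\otimes\xi\to0$ and $JW(j,j)J\xi-\xi\otimes e_j\otimes e_j\to0$. The vectors $e_j\otimes e_j\otimes\xi$ and $\xi\otimes e_j\otimes e_j$ are asymptotically orthogonal because $\xi\perp\Omega$. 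Each has squared norm $\alpha\beta\|\xi\|^2$ because $\lambda_{m+2}=\alpha\beta\lambda_m$. Therefore $\lim_j\|[W(j,j),z]\Omega\|^2=2\alpha\beta\|\xi\|^2$. For central $z$ this gives $\xi=0$, so $z\in\mathbb{C}P$ since $\Omega$ is separating.

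The paper runs the same asymptotics in dual form. It shows that every normal tracial state $\tau$ on $\mathcal{S}_P$ equals $\tau_\Omega$. It uses $W(j,j)^2=\alpha\beta P+y_j$ with $y_j\to0$ weakly to write $\tau(x)=\frac{1}{\alpha\beta}\lim_j\tau(W(j,j)xW(j,j))$, then evaluates the limit on the words via $W(j,j)\eta\approx e_j\otimes e_j\otimes\eta$. Factoriality then follows, since a nontrivial central projection $e$ would give a second normal trace $\tau_\Omega(e\,\cdot)/\tau_\Omega(e)$. Your completed route yields factoriality directly, but it uses $J$ and the separating vector. The paper's uniqueness argument uses neither, which is why it carries over to $\mathcal{S}_Q$ in Proposition \ref{sqfact}, where no cyclic separating vector is at hand.
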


\begin{proof}
We prove that the vacuum state $\t_{\Om}$ is the unique faithful normal trace.\\
We first prove the tracial property. By \eqref{sub}, it suffices to verify that
$\t_{\Om}(W(i_1,i_2)x)=\t_{\Om}(xW(i_1,i_2))$
for each $i_1,i_2\in\bn$ and $x\in\mathcal{S}_P$. The unit of $\mathcal{S}_P$ is $P$, and  Proposition \ref{sep} shows that
$J\mathcal{S}_PJ\subset \mathcal{S}'_P$. In addition, $W(i_1,i_2)^*=W(i_2,i_1)$, and therefore
\begin{align*}
\langle W(i_1,i_2)x\Om,\Om\rangle
&= \langle x\Om, e_{i_2}\otimes e_{i_1}\rangle\\
&= \langle x\Om, JW(i_1,i_2)J\Om\rangle\\
&= \langle JW(i_2,i_1)Jx\Om,\Om\rangle\\
&= \langle xJW(i_2,i_1)J\Om,\Om\rangle\\
&= \langle x W(i_1,i_2)\Om,\Om\rangle.
\end{align*}
To prove uniqueness, suppose that $\t$ is another normal trace on $\mathcal{S}_P$, that is,
\[
\t(x):=\sum_{i=1}^{\infty} \langle x\xi_i,\xi_i\rangle,
\quad \xi_i\in P\cf(\ch,\a,\b),\quad \sum_{i=1}^{\infty}\|\xi_i\|^2=1\,, \quad x\in\mathcal{S}_P\,.
\]
Let us take $j\in\bn$.  We have $W(j,j)^2=\a\b P + y_j$, where $(y_j)$ is a bounded family converging weakly to zero as $j\to\infty$, we deduce
\[
\t(x)=\frac{1}{\a\b}\lim_{j\rightarrow \infty} \t(xW(j,j)^2).
\]
Fix $x:= W(i_1,\ldots, i_n)$, where $i_1,\ldots,i_n\in\bn$. Then
\[
\frac{1}{\a\b} \lim_{j\rightarrow \infty}\langle xW(j,j)\xi_i,W(j,j)\xi_i\rangle
=\begin{cases}
0 &\text{if } n>0,\\
\|\xi_i\|^2 &\text{if } n=0.
\end{cases}\,.
\]
By \eqref{norm}, there exists a positive constant $C_{\a,\b}$, depending only on $\a$ and $\b$, such that $|\langle xW(j,j)\xi_i,W(j,j)\xi_i\rangle|\leq C_{\alpha,\beta} \|x\|^2\|\xi_i\|^2$ for each $j$. By the Lebesgue dominated convergence theorem, we conclude that
\[
\t(x)=\d_{n,0}=\langle x\Om,\Om\rangle=\t_{\Om}(x).
\]
Using \eqref{sub}, the result follows by a standard approximation argument.
\end{proof}

The factoriality of $\mathcal{S}_Q$ requires additional preliminary results which allow us to overcome the delicate problem to extend the state $\t_o$ defined on the $*$-algebra by $\tau_o(\widetilde W(i_1,....i_{2n}))=\delta_{n,0}$ to the unique normal trace on the whole $\mathcal{S}_Q$. We begin by describing how the kernel changes according to the order relation between $\a$ and $\b$.

\begin{lem}\label{kernel}
For each $i\in\bn$, the kernel of $s_i$ consists of the closed span of vectors of the form
\begin{equation*}%\label{ker}
x:=\sum_{k=0}^\infty {(-1)^k}{\g^{k}}e_i^{\otimes 2k}\otimes v,
\end{equation*}
where $v=e_{j_1}\otimes \cdots \otimes e_{j_l}$ with $l\in\bn$, $j_1\neq i$, and
$\g=\sqrt{\frac \b \a}$ if $l$ is odd and $\frac \b \a<1$, or $\g=\sqrt{\frac \a \b}$ if $l$ is even and $\frac \a \b<1$.
\end{lem}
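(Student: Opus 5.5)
We decompose the Fock space into subspaces invariant under $s_i$ and solve $s_i x=0$ on each of them by a two-term recursion; the kernel then appears as the closed span of the square-summable solutions. Every simple tensor $e_{k_1}\otimes\cdots\otimes e_{k_n}$ can be written uniquely as $e_i^{\otimes m}\otimes v$, where either $v=\Omega$ (length $l=0$) or $v=e_{j_1}\otimes\cdots\otimes e_{j_l}$ with $j_1\neq i$. For fixed $v$ of length $l$, set
\[
\ch_v:=\overline{\mathrm{span}}\{e_i^{\otimes m}\otimes v\mid m\geq 0\}.
\]
These spaces are mutually orthogonal and their sum is the whole Fock space. Each is invariant under $a_i$ and $a_i^\dag$, because $a_i(e_i^{\otimes 0}\otimes v)=\om_l\langle e_i,e_{j_1}\rangle\cdots=0$ when $j_1\neq i$ (and $a_i\Omega=0$ when $l=0$). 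Hence $\ker s_i=\bigoplus_v\ker(s_i|_{\ch_v})$, and it suffices to compute the kernel on each $\ch_v$.

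On $\ch_v$ use the orthogonal system $u_m:=e_i^{\otimes m}\otimes v$, with $\|u_m\|^2=\l_{m+l}$. The operators act as $a_i^\dag u_m=u_{m+1}$ and $a_iu_m=\om_{m+l}u_{m-1}$ for $m\geq1$, with $a_iu_0=0$. Write $x=\sum_m c_mu_m$. The equation $s_ix=0$ reads
\[
c_1\om_{l+1}=0,\qquad c_{m-1}+c_{m+1}\om_{m+l+1}=0\quad (m\geq1).
\]
This gives $c_{2k+1}=0$ and
\[
c_{2k}=(-1)^k c_0\prod_{r=1}^{k}\om_{2r+l}^{-1},
\]
so the kernel on $\ch_v$ is at most one-dimensional, spanned by the formal vector with $c_0=1$.

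Next compute the norm using \eqref{period}. If $l$ is even, then $\om_{2r+l}=\b$, so $c_{2k}=(-1)^k\b^{-k}$. Also $\l_{2k+l}=\l_l(\a\b)^k$, which gives $|c_{2k}|^2\|u_{2k}\|^2=\l_l(\a/\b)^k$. If $l$ is odd, then $\om_{2r+l}=\a$, so $c_{2k}=(-1)^k\a^{-k}$, and $\l_{2k+l}=\l_l(\a\b)^k$ gives $|c_{2k}|^2\|u_{2k}\|^2=\l_l(\b/\a)^k$. The vector therefore lies in the Fock space exactly when the relevant ratio is $<1$: $\a/\b<1$ for $l$ even and $\b/\a<1$ for $l$ odd. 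Otherwise $\ker(s_i|_{\ch_v})=\{0\}$.

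It remains to reconcile this with the stated form. In the Fock space norm, the vector $e_i^{\otimes 2k}\otimes v$ has norm $\sqrt{\l_{2k+l}}$. The decay of $x$ is governed by $\g^{2k}$, with $\g^2$ equal to the ratio above, and the weight $\b^{-k}$ (resp.\ $\a^{-k}$) combined with $\sqrt{\l_{2k+l}}\propto(\a\b)^{k/2}$ gives exactly the factor $(\a/\b)^{k/2}$ (resp.\ $(\b/\a)^{k/2}$) per step. The statement's coefficients $\g^k$ should be read with this normalization, that is, with respect to the normalized vectors $u_{2k}/\|u_{2k}\|$ up to the constant factor $\sqrt{\l_l}$. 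I expect this bookkeeping to be the only delicate point: matching the paper's convention for $\g$ against the actual coefficients $(-1)^k\b^{-k}$ or $(-1)^k\a^{-k}$ in the unnormalized basis. One must also check that a general kernel element, being a sum of its orthogonal components in the $\ch_v$, lies in the closed span of these vectors. This follows from the direct-sum decomposition together with the convergence of $\sum_v\|P_{\ch_v}x\|^2$.
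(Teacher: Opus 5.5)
Your proposal is correct and follows essentially the paper's own route: restrict $s_i$ to the reducing subspace spanned by the vectors $e_i^{\otimes k}\otimes v$, solve the two-term recursion (odd coefficients vanish, even ones are geometric), and keep exactly the square-summable solutions. The paper works directly with the orthonormal vectors $f_k=e_i^{\otimes k}\otimes v/\sqrt{\l_{k+l}}$, where the ratio $x_{2k+2}/x_{2k}=-\sqrt{\om_{2k+l+1}/\om_{2k+l+2}}$ is exactly $-\g$; this confirms your reading that the coefficients $\g^k$ in the statement refer to normalized tensors, and your explicit decomposition into the orthogonal pieces $\ch_v$ makes precise a reduction step that the paper only asserts.
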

\begin{proof}
Take an arbitrary $i\in\bn$ and fix
  $v=e_{j_1} \otimes \cdots \otimes e_{j_l}$ with
  $l\in\bn$ ($v=\Omega$ if $l=0$), and $j_1\neq i$. We introduce the vector
$$
f_k:=\frac{e_i^{\otimes k}\otimes v} {\| e_i^{k}\otimes v\|}= \frac{e_i^{\otimes k}\otimes
  v}{\sqrt{\lambda_{k+l}}}\,, \quad k\geq 0\,.
$$
After recalling that  for each $n$, $\omega_n= \frac {\lambda_n}{\lambda_{n-1}}$, we can compute (with
  the convention $f_{-1}=0$)
$$
s_if_k= \sqrt{\frac{\lambda_{k+l+1}}{\lambda_{k+l}}}f_{k+1}+
  \omega_{k+l} \sqrt{\frac{\lambda_{k+l-1}}{\lambda_{k+l}}}f_{k-1}=
  \sqrt{\omega_{k+l+1}} f_{k+1}+ \sqrt{\omega_{k+l}}f_{k-1}\,.
$$
In particular, the span of $(f_k)_k$ is invariant under $s_i$, and we may determine the kernel of $s_i$ within this invariant subspace. Assume $s_i\big(\sum_{k=0}^\infty \l_k f_k\big)=0$, where $\l_k\in\bc$ for each $k$. Testing with $f_j$, $j\geq 0$, we must have that
$$
\sqrt{\omega_{j+l+1}} x_{j+1}+ \sqrt{\omega_{j+l}}x_{j-1}=0\,.
$$
In particular, $x_{2j+1}=0$ and
  $x_{2j+2}=
 - \sqrt{\frac{\omega_{2j+l+1}}{\omega_{2j+l+2}}}x_{2j}=-\sqrt{\frac{\omega_{l+1}}{\omega_{l}}}
  x_{2j}$.  This yields the result depending on the parity of $l$.
\end{proof}
Since the position operator $s_i$, $i\in\bn$, is self-adjoint, its polar decomposition is given by a partial isometry (symmetry) $u$ on $(\ker s_i)^{\perp}=\overline{R(s_i)}$ and vanishes on $\ker s_i$, so that $s_i=u|s_i|$. \\
If we call the vectors in the range of $P$ even, and odd those in the range of $Q$, we say that an operator in $\mathcal S$ is even if it preserves parity and we say it is odd if it maps even vectors to odd ones and conversely. Clearly, $s_i$ is odd and $|s_i|$ is even by functional calculus. Hence,
$u=\mathrm{w}\!-\!\lim_{\eps\rightarrow 0} s_i(|s_i|+\eps I)^{-1}$ is odd.\\
If $\a<\b$, then the kernel of $s_i$ sits in the range of $P$, thus the range of $s_i$ contains $Q$. It follows that $v=Qu$, satisfies $vv^*=Q$ and $v^*v=uQu\leq  P$ because $u$ is odd. We have proved:
\begin{lem}\label{odd}
  If $\a<\b$, there exists a partial unitary $v\in\mathcal{S}$ with $vv^*=Q$ and
  $v^*v\leq P$.
\end{lem}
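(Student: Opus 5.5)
Fix $i\in\bn$ and work with the polar decomposition $s_i=u|s_i|$, where $u$ is the partial isometry (a symmetry on $(\ker s_i)^\perp=\overline{R(s_i)}$) and vanishes on $\ker s_i$. Since $\mathcal S$ is a von Neumann algebra containing $s_i$, the polar decomposition gives $u\in\mathcal S$. Concretely, $u$ is the weak limit of $s_i(|s_i|+\eps I)^{-1}$, and $P,Q\in\mathcal S$ by the preceding proposition. The candidate is $v:=Qu$ (equivalently $QuP$), which lies in $\mathcal S$. It remains to show $vv^*=Q$ and $v^*v\le P$.

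\emph{Step 1: parity.} Because $s_i$ anticommutes with the grading unitary $P-Q$, i.e. $s_iP=Qs_i$ by \eqref{crperiod}, the operator $|s_i|=(s_i^2)^{1/2}$ commutes with $P$ and $Q$. Hence $(|s_i|+\eps I)^{-1}$ is even, and $u$, as a weak limit of odd operators, satisfies $uP=Qu$ and $uQ=Pu$. Consequently $v=Qu=uP$.

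\emph{Step 2: kernel location.} This is the key input from Lemma~\ref{kernel}. When $\a<\b$, the only admissible $\g$ is $\sqrt{\a/\b}$, which occurs for $l$ even. The vectors $\sum_k(-1)^k\g^k e_i^{\otimes 2k}\otimes v$ therefore lie in $\bigoplus_n\ch_{2n}$, so $\ker s_i\subseteq P\cf(\ch,\a,\b)$.

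There is one point I would double check. The lemma only treats the invariant subspaces generated by $f_k$ with $j_1\ne i$, and one must make sure these subspaces decompose the whole Fock space orthogonally. They do: every basis tensor is uniquely $e_i^{\otimes k}\otimes v$ with $v$ not starting with $e_i$. The kernel of the direct sum is then the closed span of the kernels, and the norm computation gives summability exactly because $\g<1$.

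\emph{Step 3: conclusion.} Since $u^*u=uu^*$ is the projection onto $(\ker s_i)^\perp\supseteq Q\cf(\ch,\a,\b)$, we have $u^*u\ge Q$. Then
$$vv^*=Quu^*Q=Q,$$
and
$$v^*v=u^*Qu=uQu=PuuP\le P,$$
using Step 1 and $u=u^*$. Thus $v$ is a partial isometry in $\mathcal S$ with $vv^*=Q$ and $v^*v\le P$, as claimed.

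The main obstacle is Step 2: making rigorous that Lemma~\ref{kernel} describes the entire kernel, via the orthogonal decomposition into the invariant subspaces $\overline{\mathrm{span}}(f_k)_k$ on each of which $s_i$ acts as a weighted shift.
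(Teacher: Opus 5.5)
Your proposal is correct and follows essentially the same route as the paper. The paper also takes the polar decomposition $s_i=u|s_i|$ and observes that $u$ is odd. It uses Lemma~\ref{kernel} to place $\ker s_i$ inside the range of $P$ when $\a<\b$, sets $v=Qu$, and concludes $vv^*=Q$ and $v^*v=uQu=Pu^2P\le P$. Your additional check, that the closed spans of the $f_k$ give an orthogonal decomposition of the whole Fock space into $s_i$-reducing subspaces, fills in a detail the paper leaves implicit.
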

In the other case, we will need one more computation, which has appeared in other contexts \cite{Sn}.
\begin{lem} \label{inverse}
If $\a\neq\b$, there exists some $n\geq 1$ such that $\sum_{i=1}^n Qs_i^2Q$ is invertible in $\mathcal S_Q$.
\end{lem}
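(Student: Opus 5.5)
The plan is to write $Qs_i^2Q$ explicitly using the commutation relations \eqref{crperiod}. I would then show that, after summing over $i=1,\dots,n$, the positive diagonal part grows linearly in $n$. The off-diagonal part should grow only like $\sqrt n$, by the same mechanism as the norm estimate for sums of free creators (as in \cite{Sn}). Since $\a\neq\b$, the preceding proposition gives $Q\in\mathcal S$, so $T_n:=\sum_{i=1}^n Qs_i^2Q$ lies in $\mathcal S_Q$. Invertibility on $Q\cf(\ch,\a,\b)$ then means invertibility in $\mathcal S_Q$, with unit $Q$, by functional calculus: if $T_n\ge cQ$ with $c>0$, its inverse on the range of $Q$ is a Borel function of $T_n$ and therefore belongs to $\mathcal S_Q$.

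\textbf{Step 1: expand $Qs_i^2Q$.} As in the proof that $P,Q\in\mathcal S$,
$$
s_i^2=a_i^2+(a_i^\dag)^2+a_i^\dag a_i+(\a P+\b Q).
$$
Compressing by $Q$ and using $QP=0$ gives
$$
Qs_i^2Q=\b Q+Qa_i^\dag a_iQ+Q\big(a_i^2+(a_i^\dag)^2\big)Q .
$$
Summing over $i\le n$ and writing $Y_n:=\sum_{i=1}^n a_i^2$, we get
$$
T_n=n\b Q+Q\Big(\sum_{i=1}^n a_i^\dag a_i\Big)Q+Q(Y_n+Y_n^*)Q .
$$
The middle term is positive, so $T_n\ge n\b Q-\|Y_n+Y_n^*\|\,Q$.

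\textbf{Step 2: norm of $Y_n$.} I would compute $Y_nY_n^*=\sum_{i,j\le n}a_ia_ia_j^\dag a_j^\dag$. By \eqref{crperiod}, $a_ia_j^\dag=\d_{i,j}(\a P+\b Q)$, and $(\a P+\b Q)$ commutes past $a_j^\dag$ by interchanging $\a$ and $\b$. So only the terms with $i=j$ survive, and each one equals $a_i(\a P+\b Q)a_i^\dag=\a\b I$, exactly as in \eqref{id}. Hence $Y_nY_n^*=n\a\b I$, so $\|Y_n\|=\sqrt{n\a\b}$ and $\|Y_n+Y_n^*\|\le 2\sqrt{n\a\b}$.

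\textbf{Step 3: conclude.} Combining the two steps gives
$$
T_n\ge\big(n\b-2\sqrt{n\a\b}\big)Q,
$$
and the coefficient is strictly positive as soon as $n>4\a/\b$. Fixing such an $n$, $T_n$ is bounded below by a positive multiple of $Q$ on $Q\cf(\ch,\a,\b)$, so it is invertible in $\mathcal S_Q$, which is the claim.

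I do not expect a real obstacle here. The only point needing care is the exact cancellation $Y_nY_n^*=n\a\b I$. This uses the orthogonality $a_ia_j^\dag=0$ for $i\neq j$ and the parity swap of $\a P+\b Q$, and it is the step that produces the $\sqrt n$ rather than $n$ growth of the off-diagonal term.
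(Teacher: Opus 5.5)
Your proof is correct and follows the paper's argument. Both split the operator as $\sum_{i=1}^n Qs_i^2Q=S_n+I_n+S_n^*$, with $I_n\geq n\b Q$ and an off-diagonal part of norm $O(\sqrt{n})$ coming from $a_ia_j^\dag=0$ for $i\neq j$; the only difference is that you conclude via the operator inequality $T_n\geq (n\b-2\sqrt{n\a\b})\,Q$, while the paper uses a Neumann-series step. Your identity $Y_nY_n^*=n\a\b I$ also gives the exact constant $\sqrt{\a\b}$ for $\|S_n\|/\sqrt{n}$, whereas the paper's stated constant $\max\{\sqrt{\a},\sqrt{\b}\}$ is too small when $\a,\b>1$ (a harmless slip that does not affect its argument).
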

\begin{proof}
By \eqref{crperiod}, one obtains
  $$\sum_{i=1}^nQs_i^2Q=\sum_{i=1}^na_ia_iQ +\Big(n \b Q +\sum_{i=1}^n a^\dag_i a_iQ\Big)+\sum_{i=1}^n a^\dag_ia^\dag_iQ=S_n+I_n+S_n^*\,.
  $$
  Clearly, $I_n\geq n\beta Q$ and is invertible on $\mathcal{S}_Q$. Therefore, $\displaystyle{\|I_n^{-1}\|\leq \frac 1{n\b}}$. Since the $a_i^\dagger a_i^\dagger Q$ have orthogonal ranges and all have the same norm, $\displaystyle{\|S_n\|\leq C_{\a,\b} \sqrt n}$, where $C_{\a,\b}:=\max\{\sqrt{\a},\sqrt{\b}\}$. Thus, by the Neumann Lemma, we have the conclusion if
  $\displaystyle{2C_{\a,b}<\b \sqrt n}$.
\end{proof}
\begin{lem}\label{even}
  If $\a>\b$, there exist partial unitaries $(v_i)_{i=1}^n$ in $\mathcal S$
  with $\sum_{i=1}^n v_iv_i^*=Q$ and  $v_i^*v_i\leq P$ for all $i=1,...n$.
\end{lem}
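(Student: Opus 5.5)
The plan is to start from Lemma \ref{inverse}, build a column isometry from the range of $Q$ into $n$ copies of the range of $P$, and then use that $\mathcal S_P$ is a $II_1$ factor (Proposition \ref{evenfactor}) to make the pieces partial isometries. Fix $n\geq 1$ as in Lemma \ref{inverse}, so that $T:=\sum_{i=1}^n Qs_i^2Q$ is invertible in $\mathcal S_Q$.

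\emph{Step 1: the column isometry.} Since $s_i$ is odd, $s_iQ=Ps_iQ$. Put $x_i:=Ps_iQ\in\mathcal S$, so that $T=\sum_{i=1}^n x_i^*x_i$. Let $R:=T^{-1/2}\in\mathcal S_Q$ and $w_i:=x_iR$. Then
\[
\sum_{i=1}^n w_i^*w_i=RTR=Q .
\]
Equivalently, the column $W:=(w_1,\dots,w_n)^{t}\in M_{n,1}(\mathcal S)$ satisfies $W^*W=Q$. Moreover $E:=WW^*$ is a projection in $M_n(P\mathcal SP)=M_n(\mathcal S_P)$ with $E\leq P\otimes 1_n$. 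The $w_i^*$ already satisfy $\sum_i w_i^*w_i=Q$, but the $w_iw_i^*$ need not be projections. The remaining step fixes this.

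\emph{Step 2: comparison in the factor.} By Proposition \ref{evenfactor}, $M_n(\mathcal S_P)$ is a $II_1$ factor. Let $\tau_n$ be its normalized trace, so that each diagonal copy of $P$ has trace $1/n$. Since $\tau_n(E)\leq 1$, I choose projections $E_1,\dots,E_n\in\mathcal S_P$ with $\tau_{\Om}(E_i)\leq 1$ and $\sum_i\tau_{\Om}(E_i)=n\,\tau_n(E)$; for instance, fill the first $E_i$ up to $P$ and leave the remainder in the last nonzero one. In a $II_1$ factor, projections with equal trace are equivalent. Hence there is a partial isometry $U\in M_n(\mathcal S_P)$ with
\[
U^*U=E,\qquad UU^*=\mathrm{diag}(E_1,\dots,E_n).
\]
This step is the only non-formal point. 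It needs the factor property of $\mathcal S_P$ together with the existence of subprojections of arbitrary trace, both of which hold for $II_1$ factors.

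\emph{Step 3: extracting the $v_i$.} Set $V:=UW=(V_1,\dots,V_n)^t$, whose entries $V_i=\sum_k U_{ik}w_k$ lie in $P\mathcal SQ$. Then $V^*V=W^*EW=Q$, so
\[
\sum_{i=1}^n V_i^*V_i=Q .
\]
Also $VV^*=UEU^*=\mathrm{diag}(E_1,\dots,E_n)$. Reading off the diagonal gives $V_iV_i^*=E_i$, so each $V_i$ is a partial isometry.

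Now define $v_i:=V_i^*\in\mathcal S$. These satisfy $\sum_i v_iv_i^*=\sum_i V_i^*V_i=Q$ and $v_i^*v_i=V_iV_i^*=E_i\leq P$, which is the claim.
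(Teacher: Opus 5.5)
Your proof is correct, and it takes a genuinely different route from the paper's.

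\textbf{The paper's route.} The paper argues spatially. It takes the polar decompositions $s_i=u_i|s_i|$ and uses Lemma \ref{kernel}; this is where $\alpha>\beta$ enters, guaranteeing that the kernel projection $\gamma_i$ of $s_i$ lies under $Q$ and that $u_iQu_i=P$. It starts with $v_1=Qu_1$ and proceeds greedily: $v_{k+1}$ comes from the polar decomposition of $(Q-q_k)s_{k+1}$, and one checks inductively that $Q-q_k\le\gamma_j$ for all $j\le k$. Lemma \ref{inverse} enters only at the very end, to show that a vector in the range of $Q-q_n$ annihilated by $s_1,\dots,s_n$ must vanish, so $q_n=Q$.

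\textbf{Your route.} You use Lemma \ref{inverse} at the start, to normalize the column $(Ps_iQ)_i$ into a partial isometry $W$ with $W^*W=Q$. You then replace the spatial exhaustion by Murray--von Neumann comparison in the $II_1$ factor $M_n(\mathcal S_P)$, rotating $WW^*$ onto a diagonal projection.

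\textbf{What each buys.} Your argument is shorter and needs neither Lemma \ref{kernel} nor the hypothesis $\alpha>\beta$, so it also covers $\alpha\le\beta$. It also lets you prescribe the initial projections $v_i^*v_i$ freely, subject only to the trace constraint. The price is its reliance on Proposition \ref{evenfactor}: factoriality of $\mathcal S_P$, existence of projections of every trace, and equivalence of projections with equal trace. The paper's proof of this lemma uses no factor structure at all and produces the $v_i$ explicitly from the position operators.
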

\begin{proof}
  Let $n$ be given as in the previous lemma and $s_i=u_i|s_i|$ be the
  polar decompositions of $s_i$. Each $u_i$ is a partial isometry  and, since $\a>\b$, by
  Lemma \ref{kernel} we have $u_i^2=I-\gamma_i$, where $\gamma_i\leq Q$ is the projection on the kernel of $s_i$. As $u_i$ is odd, this forces $u_iPu_i=Q-\gamma_i$ and $u_iQu_i=P$.\\
If we set $v_1:=Qu_1=u_1P$, then $q_1:=v_1v_1^*=Q-\g_1$ and $v_1^*v_1=P$.\\
Assume by induction that we are given the partial unitaries $v_i$, $i=1,\ldots,k$ ($k<n$), such that
  $q_k:=\sum_{i=1}^{k} v_iv_i^*\leq Q$, $v_i^*v_i\leq P$, and finally $Q-q_k\leq \g_j$ for each $j=1,\ldots, k$.\\
  Consider the odd operator $(Q-q_k)s_{k+1}$, with its
  left and right support, denoted by $q$ and $p$, respectively. Its polar decomposition gives rise to the partial unitary $v_{k+1}\in \mathcal S$, where
   $q=v_{k+1}v_{k+1}^*\leq Q-q_k$ and $p=v_{k+1}^*v_{k+1}\leq P$. Denote $q_{k+1}:=q_k+q$, and pick $\eta$ in the range of $Q-q_{k+1}$, which, by the assumption above, is dominated by each $\gamma_j$, $j=1,\ldots,n$. Then $(Q-q_k)\eta=\eta$ and $q \eta=0$. As by definition $q(Q-q_k)s_{k+1}=(Q-q_k)s_{k+1}$, for each $\xi\in\cf(\ch;\a,\b)$, one has
   $$
   \langle s_{k+1}\eta, \xi\rangle= \langle \eta, (Q-q_k)s_{k+1}\xi\rangle=\langle \eta, q(Q-q_k)s_{k+1}\xi\rangle=0\,.
   $$
   This means that $\eta$ is in the kernel of $s_{k+1}$, and therefore $Q-q_{k+1}\leq \g_{k+1}$. Thus, the induction step is complete.\\
Since $Q-q_n\leq \g_j$ for each $j=1,\ldots,n$, if   $\eta$ is in the range of $Q-q_{n}$, we have $\g_j\eta=\eta$ and then $s_j \eta=0$. It follows that $\sum_{i=1}^n s_i^2 \eta=0$ and thus $\eta=0$ by Lemma \ref{inverse}. This gives that $q_n=Q$.
\end{proof}
\begin{prop}\label{sqfact}
  The von Neumann algebra $\mathcal{S}_Q$ has a unique tracial state
  $\tau_o$ given by $\tau_o(\widetilde W(i_1,....i_{2n}))=\delta_{n,0}$ for $i_1,...,i_n\in \bn$ and $n\geq 0$.
\end{prop}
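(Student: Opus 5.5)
Since $Q\Om=0$, the vacuum cannot serve here, so I would get both existence and uniqueness of the trace on $\mathcal{S}_Q$ by transporting information from $\mathcal{S}_P$, which by Proposition \ref{evenfactor} is a $II_1$ factor with unique normal trace $\t_\Om$. The tools are the partial isometries of Lemma \ref{odd} (case $\a<\b$) and Lemma \ref{even} (case $\a>\b$), which embed $Q$ into copies of $P$ inside $\mathcal{S}$.

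\emph{Existence.} If $\a<\b$, take $v$ from Lemma \ref{odd} with $vv^*=Q$, $v^*v=p\le P$, and set
\[
\t_o(x):=\t_\Om(v^*xv)/\t_\Om(p),\qquad x\in\mathcal{S}_Q .
\]
The map $x\mapsto v^*xv$ is a $*$-isomorphism of $\mathcal{S}_Q$ onto $p\mathcal{S}_Pp$, and $\t_\Om$ is a trace there, so $\t_o$ is a normal tracial state. Moreover $\t_\Om(p)>0$, because $\Om$ is separating for $\mathcal{S}_P$ by Proposition \ref{sep}.

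If $\a>\b$, take $v_1,\dots,v_n$ from Lemma \ref{even} and set
\[
\t_o(x):=c\sum_i\t_\Om(v_i^*xv_i),\qquad c^{-1}=\sum_i\t_\Om(v_i^*v_i).
\]
Traciality follows from the usual matrix-amplification argument. Write $x=\sum_{i,j}v_iv_i^*xv_jv_j^*$; then $\t_o(xy)=c\sum_{i,j}\t_\Om(v_i^*xv_j\,v_j^*yv_i)$. Each factor $v_i^*xv_j$ lies in $\mathcal{S}_P$, so the trace property of $\t_\Om$ on $\mathcal{S}_P$ gives $\t_o(xy)=\t_o(yx)$.

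\emph{Uniqueness.} Let $\t$ be any tracial state on $\mathcal{S}_Q$; I will show it is determined. Take first $\a<\b$. The functional $y\mapsto\t(vyv^*)$ on $p\mathcal{S}_Pp$ is a tracial state on the corner of a $II_1$ factor. Such a corner is again a factor, and its tracial state is unique once normality is available (or by a Dixmier-averaging argument in general). Hence $\t(vyv^*)=\t_\Om(y)/\t_\Om(p)$, so $\t=\t_o$.

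For $\a>\b$, define a trace on $M_n(\mathcal{S}_P)$-corners through $\t(\sum_{i,j}v_iy_{ij}v_j^*)$ and apply the same uniqueness. The expected main obstacle is normality: an abstract tracial state need not be normal. I would first prove uniqueness among normal traces, where the factor argument applies directly. I would then note that the Dixmier property of the finite factor $\mathcal{S}_P$ transfers to $\mathcal{S}_Q\cong$ a corner of $M_n(\mathcal{S}_P)$, which forces every tracial state to agree with the center-valued trace and hence to be normal.

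\emph{Identifying the formula.} It remains to check $\t_o(\widetilde W(i_1,\dots,i_{2n}))=\d_{n,0}$. Since $\widetilde W(j,j)^2=\a\b Q+y_j$ with $y_j\to0$ weakly, as in the proof of Proposition \ref{evenfactor}, traciality and normality give
\[
\t_o(x)=\tfrac1{\a\b}\lim_j\t_o\big(\widetilde W(j,j)\,x\,\widetilde W(j,j)\big).
\]
For $x=\widetilde W(i_1,\dots,i_{2n})$ with $n>0$ and $j$ larger than all indices, $\widetilde W(j,j)x\widetilde W(j,j)$ expands via \eqref{swick2} into Wick words that converge weakly to $0$. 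This uses the explicit form of $\t_o$ as a normal functional, together with dominated convergence exactly as in Proposition \ref{evenfactor}. Hence $\t_o(x)=0$ for $n>0$, while $\t_o(Q)=1$. By Lemma \ref{wickformula0}, the Wick words $\widetilde W$ span a weak$^*$-dense subalgebra of $\mathcal{S}_Q$, so these values determine $\t_o$, and this confirms the stated formula.
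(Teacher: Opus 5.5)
Your proof is correct. The existence half is the paper's own argument: transport $\tau_\Omega$ through the partial isometry of Lemma \ref{odd} when $\alpha<\beta$, and through $v_1,\dots,v_n$ of Lemma \ref{even} with the amplification computation when $\alpha>\beta$. You also add two details the paper leaves implicit: the normalization, and the fact that $\tau_\Omega(v^*v)>0$ because $\Omega$ separates $\mathcal{S}_P$.

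The two routes differ for uniqueness. The paper reruns the asymptotic argument of Proposition \ref{evenfactor} with $\widetilde W(j,j)$ in place of $W(j,j)$: for any normal trace, $\tau(x)=\frac{1}{\alpha\beta}\lim_j\tau(\widetilde W(j,j)x\widetilde W(j,j))$, and this limit vanishes on $\widetilde W(i_1,\dots,i_{2n})$ for $n>0$. One computation thus gives both uniqueness among normal traces and the formula $\delta_{n,0}$, without knowing beforehand that $\mathcal{S}_Q$ is a factor.

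You instead get uniqueness structurally. You identify $\mathcal{S}_Q$ with $p\mathcal{S}_Pp$, or with $E\,M_n(\mathcal{S}_P)\,E$ where $E=[v_i^*v_j]_{i,j}$, and use the asymptotic argument only to evaluate your constructed $\tau_o$. That evaluation step works verbatim for any normal trace, so your corner argument is not needed for normal traces. What it does buy is that $\mathcal{S}_Q$ is itself a $II_1$ factor. Through the Dixmier approximation theorem, it also gives that $\tau_o$ is unique among all tracial states, normal or not, which is slightly more than the paper establishes.

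The computation you only sketch does go through. For $j\notin\{i_1,i_{2n}\}$, \eqref{swick2}, \eqref{ws} and \eqref{tws} give $\widetilde W(j,j)\widetilde W(i_1,\dots,i_{2n})\widetilde W(j,j)=\widetilde W(j,j,i_1,\dots,i_{2n},j,j)$. Each Wick term of this word either ends in $a_jQ$, which is strongly null, or is the pure creation term starting with $a_j^\dagger$, which is weakly null.
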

\begin{proof}
 The argument of Proposition \ref{evenfactor} to show the uniqueness of the trace also works for $\mathcal S_Q$ replacing $W$ by $\widetilde W$. In particular,
 the only possibility is that  $\tau_0(\widetilde W(i_1,\ldots,i_{2n}))=\delta_{n,0}$ for all $i_1,\ldots,i_{2n}\in\bn$. This yields the uniqueness by weak-$*$ density \eqref{sub2}.
 We now show that $\mathcal S_Q$ indeed admits a normal trace.\\
When $\a<\b$, by Lemma \ref{odd}, there exists a partial isometry
  $v\in\mathcal{S}$ such that $v^*v\leq P$ and $Q=vv^*$. Define
  $\pi:\mathcal{S}_Q\rightarrow \mathcal{S}_P$ by
  $\pi(x):=v^*xv$. Since $vv^*$ is the identity of $\mathcal{S}_Q$,
  the map $\pi$ is an injective $*$-homomorphism. Then
  $\tau_\Omega\circ \pi$ is a finite non zero normal positive functional on $\mathcal{S}_Q$.\\
When $\b>\a$, by Lemma \ref{even} there exist partial isometries
  $v_i\in\mathcal{S}$, $i=1,\ldots,n$, such that $ v_i^*v_i\leq P$ and
  $\sum_{i=1}^n v_iv_i^*=Q$.  Define, for $x\in\mathcal S_Q$,
  $\tau(x):=\sum_{i=1}^n \tau_\Omega(v_i^*xv_i)$. This is a normal positive functional on $\mathcal S_Q$. For $x,y\in\mathcal S_Q$, as $v_i^*xv_j, v_j^*yv_i\in \mathcal S_P$, one has
  $$\tau(xy)=\sum_{i,j=1}^n \tau_\Omega(v_i^*xv_jv_j^*yv_i)
  =\sum_{i,j=1}^n \tau_\Omega(v_j^*yv_iv_i^*xv_j)=\tau(yx).$$
  This concludes the proof.
\end{proof}

\begin{thm}\label{factorr}
The von Neumann algebra $\mathcal{S}$ is a
  factor of type $\text{II}_1$.
  \end{thm}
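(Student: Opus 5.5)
The plan is to combine the two corner results. By Proposition \ref{evenfactor}, $P\mathcal S P$ is a type $II_1$ factor. By Proposition \ref{sqfact}, $Q\mathcal S Q$ carries a unique tracial state $\tau_o$, which is normal since it is built from $\t_\Om$ and partial isometries. We also need the partial isometries of Lemmas \ref{odd} and \ref{even}, which connect $Q$ to subprojections of $P$ inside $\mathcal S$.

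\textbf{Step 1: the central support of $P$ is $I$.} The first task is to show that the central support $z(P)$ of $P$ in $\mathcal S$ equals $I$.
\begin{itemize}
\item If $\a<\b$, Lemma \ref{odd} gives $v\in\mathcal S$ with $vv^*=Q$ and $v^*v\le P$. Hence $Q\precsim P$, so $Q\le z(P)$.
\item If $\a>\b$, Lemma \ref{even} gives $Q=\sum_i v_iv_i^*$ with $v_i^*v_i\le P$. Each $v_iv_i^*\le z(P)$, so again $Q\le z(P)$.
\end{itemize}
Since also $P\le z(P)$, we get $z(P)\ge P+Q=I$.

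\textbf{Step 2: $\mathcal S$ is a factor.} For a projection with central support $I$, the map $z\mapsto zP$ is an isomorphism from $Z(\mathcal S)$ onto $Z(P\mathcal S P)$. The latter is $\bc P$ because $\mathcal S_P$ is a factor, so $Z(\mathcal S)=\bc I$.

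As an alternative check: if $z$ is central, then $zP=\lambda P$, and $zv=vz$ gives $zQ=zvv^*=vzv^*=\lambda Q$ (and similarly with the $v_i$ in the other case). Hence $z=\lambda I$.

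\textbf{Step 3: $\mathcal S$ is finite.} Next, define the candidate faithful normal trace
$$\t(x):=c\,\t_\Om(PxP)+(1-c)\,\tau_o(QxQ)$$
for a suitable $c\in(0,1)$. For this to be tracial, $c$ must match the "relative dimension": any element mapping $Q$ into $P$ must be balanced by the two weights.

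Rather than computing $c$ directly, I would argue structurally. A factor containing a finite projection $P$ of central support $I$, with $Q\precsim P$ or $Q$ a finite sum of subprojections of $P$, is itself finite. Concretely, $I=P+Q$ is a finite sum of projections each equivalent to a subprojection of the finite projection $P$. A finite sum of finite projections in a factor is finite, so $\mathcal S$ is finite.

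\textbf{Step 4: type $II_1$.} Finiteness together with factoriality gives a unique faithful normal tracial state $\t$ on $\mathcal S$. It is not type $I_n$, since the corner $\mathcal S_P$ is type $II_1$ (it is infinite dimensional and diffuse). Hence $\mathcal S$ is a type $II_1$ factor.

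By uniqueness of the traces on the corners, $\t(P)^{-1}\t|_{\mathcal S_P}=\t_\Om$ and $\t(Q)^{-1}\t|_{\mathcal S_Q}=\tau_o$. This identifies $\t$ as the convex combination above, with $c=\t(P)$.

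\textbf{Main obstacle.} The main difficulty I expect is justifying finiteness cleanly, in particular the "sum of finite projections is finite" step. To stay self-contained, I would instead verify directly that $\t$ is tracial on words. Take $x=v^*yv$-type elements and use $\t_\Om(v_i^*xv_i)$ as in Proposition \ref{sqfact}. This forces $c/(1-c)$ to equal the ratio $\tau(Q)/\t_\Om(\cdot)$, obtained by computing $\sum_i\t_\Om(v_i^*v_i)$, and that computation is where the real work lies.
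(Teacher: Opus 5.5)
Your argument is correct, but it takes a genuinely different route from the paper.

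\textbf{The paper's route.} The paper avoids comparison theory altogether. It writes down the explicit normal state
\[
\tau=\tfrac{1}{\alpha+\beta}\bigl(\beta\,\tau_\Omega(P\cdot P)+\alpha\,\tau_o(Q\cdot Q)\bigr).
\]
It then checks traciality directly on the Wick words via \eqref{swick1}, \eqref{swick2}, \eqref{ws} and \eqref{tws}. For uniqueness, it observes that any normal tracial state must restrict on the two corners to multiples of $\tau_\Omega$ and $\tau_o$, and the relations force the weight to satisfy $\alpha\lambda=(1-\lambda)\beta$. Uniqueness of the normal trace then yields factoriality and type $II_1$. This route needs the existence of a normal trace on $\mathcal S_Q$, i.e.\ Proposition \ref{sqfact}.

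\textbf{Your route.} You use Lemmas \ref{odd} and \ref{even} only to show $z(P)=I$ and that $Q$ is a finite sum of projections each subequivalent to $P$. Everything else is standard structure theory:
\begin{itemize}
\item when $z(P)=I$, the map $z\mapsto zP$ identifies the center of $\mathcal S$ with that of $P\mathcal S P$;
\item finite orthogonal sums of finite projections are finite (e.g.\ $E\vee F$ is finite when $E,F$ are);
\item a finite factor on a separable space carries a faithful normal tracial state;
\item corners of $I_n$ factors are finite-dimensional.
\end{itemize}

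\textbf{What each approach buys.} Your argument bypasses Proposition \ref{sqfact} entirely, and it proves factoriality more transparently than the paper, whose passage from ``unique normal trace'' to ``factor'' is left implicit. What you do not obtain is the explicit weight $\tau(P)=\beta/(\alpha+\beta)$. The paper uses this value later, for the law of $s_j$ under $\tau$ and in the masa remark.

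\textbf{Your ``main obstacle'' paragraph.} The worry there is unfounded. The finiteness step is a textbook theorem, so no direct trace computation is needed. The constant $c$ also does not require computing $\sum_i\tau_\Omega(v_i^*v_i)$. Since $s_iP=Qs_i$, traciality gives $\tau(Qs_i^2Q)=\tau(s_iPs_i)=\tau(Ps_i^2P)$. Now $Qs_i^2Q=\widetilde W(i,i)+\beta Q$ and $\tau_\Omega(Ps_i^2P)=\alpha$. Hence $(1-c)\beta=c\alpha$, which recovers the paper's $c=\beta/(\alpha+\beta)$ in one line.
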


  \begin{proof}
We first observe that for each $j, m \in \mathbb{N}$ and $i_1, \ldots, i_m \in \mathbb{N}$,
\begin{align}
\begin{split}\label{ws}
W(i_1, \ldots, i_m) s_j
&= \sum_{k=0}^{m} a^\dagger_{i_1} \cdots a^\dagger_{k} a_{k+1} \cdots a_{m} P(a_j + a^\dagger_j) \\
&= \widetilde{W}(i_1, \ldots, i_m, j) + \delta_{i_m, j} \beta \widetilde{W}(i_1, \ldots, i_{m-1}),
\end{split}
\end{align}
and
\begin{align}
\begin{split}\label{tws}
\widetilde{W}(i_1, \ldots, i_m) s_j
&= \sum_{k=0}^{m} a^\dagger_{i_1} \cdots a^\dagger_{k} a_{k+1} \cdots a_{m} Q(a_j + a^\dagger_j) \\
&= W(i_1, \ldots, i_m, j) + \delta_{i_m, j} \alpha W(i_1, \ldots, i_{m-1}).
\end{split}
\end{align}
Let us define the normal state $\tau: \mathcal{S} \to \mathbb{C}$ by
\begin{equation}\label{trac}
\tau(x) := \frac{1}{\alpha + \beta} \left( \beta \tau_\Omega(PxP) + \alpha \tau_0(QxQ) \right), \quad x \in \mathcal{S},
\end{equation}
and show that it is tracial. To this end, it suffices to prove that for any $j, m \in \mathbb{N}$ and $i_1, \ldots, i_m \in \mathbb{N}$,
\begin{equation}\label{trace1}
\tau(W(i_1, \ldots, i_m) s_j) = \tau(s_j W(i_1, \ldots, i_m)),
\end{equation}
and
\begin{equation}\label{trace2}
\tau(\widetilde{W}(i_1, \ldots, i_m) s_j) = \tau(s_j \widetilde{W}(i_1, \ldots, i_m)).
\end{equation}
If $m=2n$, then from \eqref{ws}, \eqref{swick1}, \eqref{swick2} and \eqref{tws}, both the left-hand side and the right-hand side of equations \eqref{trace1} and \eqref{trace2} vanish.\\
If $m=2n+1$, then \eqref{ws} and \eqref{swick1} yield
\begin{align*}
\tau(W(i_1, \ldots, i_{2n+1}) s_j)
&= \frac{\alpha \beta}{\alpha + \beta} \delta_{i_{2n+1}, j} \tau_0(Q \widetilde{W}(i_1, \ldots, i_{2n})) \\
&= \frac{\alpha \beta}{\alpha + \beta} \delta_{n,0} \delta_{i_1, j} \\
&= \frac{\alpha \beta}{\alpha + \beta} \delta_{i_1, j} \tau_\Omega(P W(i_2, \ldots, i_{2n})) \\
&= \tau(s_j W(i_1, \ldots, i_{2n+1})).
\end{align*}
Similarly, \eqref{tws} and \eqref{swick2} give
\begin{align*}
\tau(\widetilde{W}(i_1, \ldots, i_{2n+1}) s_j)
&= \frac{\alpha \beta}{\alpha + \beta} \delta_{i_{2n+1}, j} \tau_\Omega(P W(i_1, \ldots, i_{2n})) \\
&= \frac{\alpha \beta}{\alpha + \beta} \delta_{n,0} \delta_{i_1, j} \\
&= \frac{\alpha \beta}{\alpha + \beta} \delta_{i_1, j} \tau_0(Q \widetilde{W}(i_2, \ldots, i_{2n})) \\
&= \tau(s_j \widetilde{W}(i_1, \ldots, i_{2n+1})).
\end{align*}
Finally, the trace $\tau$ is unique. Indeed, any normal tracial state on $\mathcal{S}$ must be a convex combination of $\tau_\Omega$ and $\tau_o$. That is, for each $x \in \mathcal{S}$,
\begin{equation*}%\label{eqtr}
\tilde{\tau}(x) = \tilde{\tau}(PxP) + \tilde{\tau}(QxQ) = \lambda \tau_\Omega(PxP) + (1 - \lambda) \tau_0(QxQ),
\end{equation*}
where $\lambda \in [0, 1]$.
Relations \eqref{swick1} and \eqref{ws} force $\lambda$ to satisfy $\alpha \lambda = (1 - \lambda) \beta$, and therefore $\tilde{\tau} = \tau$.
\end{proof}
\section{free Poisson laws and free group factors}\label{seclaw}
This section is devoted to the probabilistic aspects of period-2 interacting Fock spaces. We compute explicitly the vacuum distribution of the position operators and show that it is related to free Poisson law through a square-root transformation. We then prove that the squares of the position operators are free with respect to the vacuum state, yielding copies of the free group factor $L(\mathbb F_\infty)$ inside the generated von Neumann algebra.\\

We introduce the notation
\begin{equation}\label{pn}
P_n(\a,\b):=\sum_{\eps\in \{-1,1\}^{2n}_+}
\langle\Om,a_j^{\eps(1)}\cdots a_j^{\eps(2n)}\Om\rangle,
\qquad n\in\bn^*,
\end{equation}
where the independence of the left-hand side from $j$ follows from Proposition~\ref{moment1}.
A direct consequence of Proposition~\ref{moment1} is the following result.
\begin{prop}\label{moment2}
For each $n\in\bn^*$ and $\eps\in\{-1,1\}^{2n}_+$, there exists $k\in\{0,1,\ldots,n\}$, determined by $\eps$, such that for each $j\in\bn$,
\begin{align*}%\label{2ome02h}
\t_\Om(a_j^{\eps(1)}\cdots a_j^{\eps(2n)})
=\a^k\b^{n-k}, \qquad
\t_\Om(a_ja_j^{\eps(1)}\cdots a_j^{\eps(2n)}a^\dag_j)
=\a\b^k\a^{n-k}.
\end{align*}
\end{prop}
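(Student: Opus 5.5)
We apply Proposition \ref{moment1} with all test functions equal to $e_j$, so that every inner product $\langle f_{l_h},f_{r_h}\rangle$ equals $1$. For $\eps\in\{-1,1\}^{2n}_+$ with associated non crossing pair partition $\{(l_h,r_h)\}_{h=1}^n$, \eqref{2ome02a} gives
\[
\t_\Om(a_j^{\eps(1)}\cdots a_j^{\eps(2n)})=\prod_{h=1}^n\om_{2h-l_h}.
\]
The task is therefore to read off each factor through the periodicity \eqref{period}. Each index satisfies $2h-l_h\geq 1$: the blocks $(l_1,r_1),\ldots,(l_{h-1},r_{h-1})$ occupy at most $2(h-1)$ positions smaller than $l_h$, and all positions before $l_h$ belong to these earlier-opened blocks, so $l_h\le 2h-1$. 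By \eqref{period}, $\om_{2h-l_h}=\a$ when $2h-l_h$ is odd, that is when $l_h$ is odd, and $\om_{2h-l_h}=\b$ when $l_h$ is even.

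We then set $k$ equal to the number of blocks $h$ whose left endpoint $l_h$ is odd. This number depends only on $\eps$, not on $j$, and lies in $\{0,\ldots,n\}$. The first product becomes $\a^k\b^{n-k}$.

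For the second identity we use \eqref{2ome02b} with $f=g=e_j$:
\[
\t_\Om(a_ja_j^{\eps(1)}\cdots a_j^{\eps(2n)}a_j^\dag)=\om_1\prod_{h=1}^n\om_{2h-l_h+1}.
\]
Here $\om_1=\a$, and shifting each index by one swaps the parity. Each block with odd $l_h$ now contributes $\b$, and each block with even $l_h$ contributes $\a$. With the same $k$ the product is $\a\b^k\a^{n-k}$, as claimed.

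The only delicate point is the check that every index $2h-l_h$ is positive, so that the convention $\om_n=0$ for $n<0$ never intervenes. The counting argument above handles this. The rest is bookkeeping, and once both displays are tied to the single statistic $k$, consistency between them is automatic.
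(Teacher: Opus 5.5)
Your proof is correct and is essentially the paper's argument. Like the paper, you specialize Proposition~\ref{moment1} to $f=g=f_i=e_j$, read off $\om_{2h-l_h}\in\{\a,\b\}$ from the parity of $l_h$ via \eqref{period}, and take $k$ to be the number of blocks with odd left endpoint. Your extra check that $l_h\le 2h-1$, so that every index is positive and \eqref{period} applies, is a detail the paper leaves implicit.
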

\begin{proof}
As already shown, the indices $l_1,\ldots,l_n$ appearing in \eqref{2ome02a} and \eqref{2ome02b} are uniquely determined by $\eps\in\{-1,1\}^{2n}_+$. The claim follows by observing that $\om_{2h-l_k}$ equals $\a$ or $\b$ depending on whether $l_k$ is odd or even, respectively.
\end{proof}
Recall that for each $\eps\in \{-1,1\}^{2n}_+$, there is a unique $k\in\{0,1,\ldots,n\}$ such that for any $j\in\bn$
\[
\langle\Om,a_j^{\eps(1)} \ldots a_j^{\eps(2n)}\Om \rangle=\a^k\b^{n-k}\,.
\]
If for any $k=0,\ldots,n$, we denote
\[
J_{n,k}:=\big\{\eps\in \{-1,1\}^{2n}_+:\,\langle\Om, a_j^{\eps(1)}\ldots a_j^{\eps(2n)}\Om \rangle =\a^k\b^{n-k}\big\},
\]
then one has $\{-1,1\}^{2n}_+=\bigsqcup_{k=0}^n J_{n,k}$, where the symbol $\bigsqcup$ stands for disjoint union.  Therefore, for $p_{n,k}:= |J_{n,k}|$,
\begin{equation}\label{cata}
\sum_{k=0}^n p_{n,k} = \bigg|\bigsqcup_{k=0}^n J_{n,k}\bigg| = |\{-1,1\}^{2n}_+| = \frac{1}{n+1}\binom{2n}{n},
\end{equation}
where the last equality follows where the last equality follows from the well-known fact that the number of non crossing pair partitions of $[2n]$ is the $n$th Catalan number.\\
In addition, \eqref{pn} gives us that for each $n\in\bn^*$
\[
P_n(\a,\b)=\sum_{k=0}^n p_{n,k}\a^k\b^{n-k},
\]
and
\begin{equation}\label{acata}
\sum_{\eps\in \{-1,1\}^{2n}_+} \t_\Om(a_j a_j^{\eps(1)} \ldots a_j^{\eps(2n)} a_j^\dag)= \a \sum_{k=0}^n p_{n,k} \b^k \a^{n-k} = \a P_n(\b,\a).
\end{equation}
Let us define the (formal) power series $S_{u,v}(t):=\sum_{n=0}^{\infty} P_n(u,v) t^n$, where $P_0 := 1$, $u,v\in\bc$, $t\in\bc$, and notice that
\begin{align*}
&\a + \sum_{n=1}^\infty \sum_{\eps\in \{-1,1\}^{2n}_+} \langle \Om, a_j a_j^{\eps(1)} \cdots a_j^{\eps(2n)} a_j^\dag \Om \rangle t^n \\
=& \a + \a \sum_{n=1}^\infty P_n(\b,\a) t^n = \a S_{\b,\a}(t).
\end{align*}
For each $j\in\bn$, the moment generating function of $s_j^2$ with respect to the vacuum will be denoted by $\cam_{s^2}$.

\begin{prop}\label{2ome03}
The series $S_{u,v}(t)$ has a positive radius of convergence. Namely, there exists $\delta>0$ such that the series converges for all $t\in\bc$ with $|t|<\delta$. Therefore, in the same disc, $\cam_{s^2}=S_{\a,\b}$.
\end{prop}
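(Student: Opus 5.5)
The plan is a crude coefficient bound followed by an identification of moments. For the radius of convergence, \eqref{cata} together with $P_n(u,v)=\sum_{k=0}^n p_{n,k}u^kv^{n-k}$ gives
\[
|P_n(u,v)|\le \max\{|u|,|v|\}^n\sum_{k=0}^n p_{n,k}=\max\{|u|,|v|\}^n\,\frac{1}{n+1}\binom{2n}{n}\le \bigl(4\max\{|u|,|v|\}\bigr)^n ,
\]
using $\frac{1}{n+1}\binom{2n}{n}\le 4^n$. Hence $S_{u,v}(t)$ is dominated termwise by a geometric series. It converges absolutely for $|t|<\delta:=\bigl(4\max\{|u|,|v|\}\bigr)^{-1}$, with the convention $\delta=\infty$ when $u=v=0$. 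For the case of interest we take $\delta=(4\max\{\a,\b\})^{-1}$.

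Next I would identify the coefficients of $\cam_{s^2}$. By definition $\cam_{s^2}(t)=\sum_{n\ge0}\t_\Om(s_j^{2n})t^n$. By \eqref{2ome01b} and \eqref{pn}, $\t_\Om(s_j^{2n})=P_n(\a,\b)$ for every $n\ge1$, and for $n=0$ both sides equal $1$. So the two series agree coefficientwise as formal power series, independently of $j$ by Proposition~\ref{moment1}. Since $S_{\a,\b}$ converges on $|t|<\delta$, the identity $\cam_{s^2}=S_{\a,\b}$ holds there as an equality of analytic functions.

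For $\cam_{s^2}$ itself one could also argue directly that $|\t_\Om(s_j^{2n})|\le\|s_j\|^{2n}\le (2\max\{\sqrt\a,\sqrt\b\})^{2n}$ by \eqref{norm}. This gives the same radius $\delta$ and confirms that $\cam_{s^2}$ is a genuine convergent series and not merely a formal one.

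There is no real obstacle here. The only points needing care are the Catalan bound and the bookkeeping that \eqref{2ome01b} matches \eqref{pn} term by term.
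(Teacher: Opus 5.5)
Your proposal is correct and is essentially the paper's own argument. The paper also uses \eqref{cata} to bound $|P_n(u,v)|\le (n+1)\max_k p_{n,k}\,\max\{|u|,|v|\}^n\le \binom{2n}{n}\max\{|u|,|v|\}^n$; your direct use of $\sum_k p_{n,k}$ is the same estimate, slightly sharper, and with the explicit radius $(4\max\{\a,\b\})^{-1}$ and the norm-based cross-check as harmless extras. Like you, the paper then matches the coefficients of $\cam_{s^2}$ with $P_n(\a,\b)$ via \eqref{2ome01b} and \eqref{pn}.
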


\begin{proof}
Indeed, for any $n\in\bn$ and $u,v\in\bc$, \eqref{cata} entails
\[
|P_n(u,v)| \le (n+1) \max_{0\le k\le n} p_{n,k} \big(\max\{|u|,|v|\}\big)^n \le \binom{2n}{n} \big(\max\{|u|,|v|\}\big)^n.
\]
The last part of the statement then follows from \eqref{2ome01b} and \eqref{pn}.
\end{proof}

Note that the map
\[
z \mapsto \sum_{n=0}^\infty \sum_{\eps\in \{-1,1\}^{2n}_+} \t_\Om(a_j a_j^{\eps(1)} \cdots a_j^{\eps(2n)} a_j^\dag)  z^n
\]
is also well-defined for $|z|$ sufficiently small, and the series sums to $\a S_{\a,\b}(z)$, as follows from \eqref{acata}.

\begin{thm}\label{MarPa}
The moment generating function $\cam_{s^2}$, within its radius of convergence, takes the form
\begin{equation}\label{MP}
\cam_{s^2}(z) = S_{\a,\b}(z) = \frac{2}{1 - (\a-\b)z + \sqrt{\big(1 - (\a-\b)z\big)^2 - 4\b z}}.
\end{equation}
Therefore, for any $j\in\bn$, $s_j^2$ has the Marchenko-Pastur distribution with rate $\frac{\a}{\b}$ and size $\b$ with respect to the vacuum.
\end{thm}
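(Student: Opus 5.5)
The approach is a first-return decomposition of Dyck paths that couples the two series $S_{\a,\b}$ and $S_{\b,\a}$ into a closed system of functional equations.

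Fix $j$ and $\eps\in\{-1,1\}^{2n}_+$, $n\ge1$. In the associated non crossing pair partition, the block containing $1$ is $(1,r_1)$ with $r_1=2m+2$ for some $m\in\{0,\ldots,n-1\}$. The word then factors as
\[
a_j\,\big(a_j^{\eps(2)}\cdots a_j^{\eps(2m+1)}\big)\,a_j^\dag\,\big(a_j^{\eps(2m+3)}\cdots a_j^{\eps(2n)}\big).
\]
The inner piece is an element of $\{-1,1\}^{2m}_+$ and the outer piece is an element of $\{-1,1\}^{2(n-m-1)}_+$. The tail returns to $\Om$ when applied to $\Om$, so the vacuum expectation factorizes as
\[
\langle\Om, a_j\,(\text{inner})\,a_j^\dag\Om\rangle\cdot\langle\Om,(\text{tail})\Om\rangle .
\]
I would check this factorization directly from Proposition~\ref{moment1}. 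In \eqref{2ome02a}, each block weight $\om_{2h-l_h}$ depends only on the parity of the depth of that block. The outer block has weight $\a=\om_1$. Blocks of the tail keep the same depths they would have on their own, while blocks of the inner piece have their depth shifted by one; this is exactly \eqref{2ome02b}.

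Summing over $\eps$ and using \eqref{acata}, I get the recursion
\[
P_n(\a,\b)=\sum_{m=0}^{n-1}\a P_m(\b,\a)\,P_{n-1-m}(\a,\b).
\]
At the level of formal power series (convergent for small $|t|$ by Proposition~\ref{2ome03}) this reads
\[
S_{\a,\b}(t)=1+\a t\,S_{\b,\a}(t)\,S_{\a,\b}(t),
\]
and by symmetry
\[
S_{\b,\a}(t)=1+\b t\,S_{\a,\b}(t)\,S_{\b,\a}(t).
\]
Write $A=S_{\a,\b}$ and $B=S_{\b,\a}$. The first equation gives $\a tAB=A-1$ and the second gives $\b tAB=B-1$. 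Hence $B=1+\frac{\b}{\a}(A-1)$. Substituting into the first equation yields the quadratic
\[
\b t A^2-\big(1-(\a-\b)t\big)A+1=0 .
\]
The solution that is analytic at $0$ with $A(0)=1$ is
\[
A=\frac{1-(\a-\b)t-\sqrt{(1-(\a-\b)t)^2-4\b t}}{2\b t}
=\frac{2}{1-(\a-\b)t+\sqrt{(1-(\a-\b)t)^2-4\b t}},
\]
where the second form comes from rationalizing the numerator. This is \eqref{MP}, with $\cam_{s^2}=S_{\a,\b}$ by Proposition~\ref{2ome03}.

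To identify the law, I would use \eqref{AA}: $\mathcal G_{s^2}(z)=\frac1z\cam_{s^2}(1/z)$. Taking $\lambda=\a/\b$ and $\gamma=\b$ in \eqref{CaMP}, one has
\[
\gamma(1+\lambda)=\a+\b,\qquad 4\lambda\gamma^2=4\a\b,\qquad \gamma-\lambda\gamma=\b-\a .
\]
The identity to check is
\[
(z-\a+\b)^2-4\b z=(z-(\a+\b))^2-4\a\b ,
\]
which holds since both sides expand to $z^2-2(\a+\b)z+(\a-\b)^2$. After rationalizing, $\frac1z\cam_{s^2}(1/z)$ coincides with \eqref{CaMP}. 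Since the measure is compactly supported (the $s_j$ are bounded), the Cauchy transform determines it, so $s_j^2$ has the Marchenko--Pastur law.

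The main obstacle is the depth-parity bookkeeping in the factorization step: I need the inner word's weights to be exactly those of \eqref{2ome02b}, with the roles of $\a$ and $\b$ swapped, and the tail's weights to be unshifted. A secondary point is choosing the correct square-root branch so that the expression matches both the analytic solution at $t=0$ and the Cauchy-transform normalization $\mathcal G(z)\sim 1/z$ at infinity.
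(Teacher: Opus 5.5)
Your proposal is correct and follows the paper's proof essentially step by step. It uses the same first-block factorization, which gives $S_{\a,\b}=1+\a z\,S_{\a,\b}S_{\b,\a}$ together with its $\a\leftrightarrow\b$ counterpart, then the same quadratic selected by $S_{\a,\b}(0)=1$, and the same identification with \eqref{CaMP} via \eqref{AA}. The only cosmetic difference is in the elimination: you use the linear relation $\b(A-1)=\a(B-1)$, whereas the paper effectively substitutes $S_{\b,\a}=(1-\b z S_{\a,\b})^{-1}$. Both lead to the identical quadratic.
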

\begin{proof}
  Let us fix $j\in\bn$. For each $k\in\{1,\ldots,n\}$, let $\{-1,1\}^{2n}_{+,k}$ be the collection of all $\eps\in\{-1,1\}^{2n}_{+}$ for which the corresponding non crossing pair partition $\{(l^\eps_h,r^\eps_h)\}_{h=1}^n$ satisfies {$(l^\eps_1,r^\eps_1)=(1,2k)$}.\\
Clearly,
\[
\bigsqcup_{k=1}^n \{-1,1\}^{2n}_{+,k} = \{-1,1\}^{2n}_+.
\]
Moreover, for each $k\in\{1,\ldots,n\}$ and $\eps\in\{-1,1\}^{2n}_{+,k}$, one easily obtains the well-known factorization
\begin{align}\label{facto}
\t_\Om(a_j^{\eps(1)} \cdots a_j^{\eps(2n)})
&= \t_\Om(a_j^{\eps(1)} \cdots a_j^{\eps(2k)})
  \t_\Om(a_j^{\eps(2k+1)} \cdots a_j^{\eps(2n)}) \\
&= \t_\Om(a_j a_j^{\eps(2)} \cdots a_j^{\eps(2k-1)} a_j^\dag) \t_\Om (a_j^{\eps(2k+1)} \cdots a_j^{\eps(2n)}).
\end{align}
This factorization suggests introducing two sub non crossing pair partitions derived from $\eps\in\{-1,1\}^{2n}_{+,k}$, with $k\in\{1,\ldots,n\}$ as usual. Namely, we define
\begin{align*}
\eps_1(h) &:= \eps(h+1), \quad h\in\{1,\ldots,2(k-1)\}, \\
\eps_2(h) &:= \eps(2k+h), \quad h\in\{1,\ldots,2(n-k)\},
\end{align*}
and notice that $\eps_1 \in \{-1,1\}^{2(k-1)}_+$ and $\eps_2 \in \{-1,1\}^{2(n-k)}_+$.\\
Now we compute, for $|z|<\delta$ (it is the same for $S_{\a,\b}$ and $S_{\a,\b}$)
\begin{align*}
S_{\a,\b}(z)
&= \sum_{n=0}^\infty \sum_{\eps\in \{-1,1\}^{2n}_+} \t_\Om( a_j^{\eps(1)} \cdots a_j^{\eps(2n)}) z^n \\
&= 1+ \sum_{n=1}^\infty \sum_{k=1}^n \sum_{\eps\in \{-1,1\}^{2n}_{+,k}} \t_\Om( a_j^{\eps(1)} \cdots a_j^{\eps(2n)}) z^n.
\end{align*}
Using the factorization property, the last line becomes
\begin{align*}
1 + \sum_{n=1}^\infty \sum_{k=1}^n
\sum_{\eps_1\in\{-1,1\}^{2(k-1)}_+}
\sum_{\eps_2\in\{-1,1\}^{2(n-k)}_+}
& \t_\Om(a_j a_j^{\eps_1(1)} \cdots a_j^{\eps_1(2(k-1))} a_j^\dag) \cdot \\
& \cdot \t_\Om( a_j^{\eps_2(1)} \cdots a_j^{\eps_2(2(n-k))}) z^n.
\end{align*}
Rearranging indices in the sums, we get
\begin{align*}
1 + &\sum_{k=1}^\infty \sum_{\eps_1\in\{-1,1\}^{2(k-1)}_+}
\t_\Om(a_j a_j^{\eps_1(1)} \cdots a_j^{\eps_1(2(k-1))} a_j^\dag) z^k
\cdot\\
& \cdot \sum_{m=0}^\infty \sum_{\eps_2\in\{-1,1\}^{2m}_+}
\t_\Om(a_j^{\eps_2(1)} \cdots a_j^{\eps_2(2m)}) z^m.
\end{align*}
Since $\sum_{m=0}^\infty \sum_{\eps_2\in\{-1,1\}^{2m}_+} \t_\Om( a_j^{\eps_2(1)} \cdots a_j^{\eps_2(2m)}) z^m = S_{\a,\b}(z)$, it follows that
\begin{align}\label{2ome04b}
\begin{split}
S_{\a,\b}(z)
&= 1 + S_{\a,\b}(z) \sum_{k=1}^\infty \sum_{\eps_1\in\{-1,1\}^{2(k-1)}_+} \t_\Om(a_j a_j^{\eps_1(1)} \cdots a_j^{\eps_1(2(k-1))} a_j^\dag) z^k \\
&= 1 + z S_{\a,\b}(z) \sum_{m=0}^\infty \sum_{\eps\in\{-1,1\}^{2m}_+} \t_\Om(a_j a_j^{\eps(1)} \cdots a_j^{\eps(2m)} a_j^\dag) z^m \\
&= 1 + z S_{\a,\b}(z) \Big( \t_\Om(a_j a_j^\dag)+ \sum_{m=1}^\infty \sum_{\eps\in\{-1,1\}^{2m}_+} \t_\Om(a_j a_j^{\eps(1)} \cdots a_j^{\eps(2m)} a_j^\dag) z^m \Big) \\
&= 1 + \a\, z\, S_{\a,\b}(z) \, S_{\b,\a}(z).
\end{split}
\end{align}
Furthermore, this equality holds for any $\a, \b > 0$, so that
\begin{align}\label{2ome04c}
S_{\b,\a}(z) = 1 + \b\, z\, S_{\a,\b}(z) \, S_{\b,\a}(z).
\end{align}
Applying \eqref{2ome04c} repeatedly in \eqref{2ome04b} yields
\begin{align*}
S_{\a,\b}(z) &= 1 + \a\, z\, S_{\a,\b}(z) \, S_{\b,\a}(z) \\
&= 1 + \a\, z\, S_{\a,\b}(z) \big(1 + \b\, z\, S_{\a,\b}(z) \, S_{\b,\a}(z) \big) \\
&= 1 + \a\, z\, S_{\a,\b}(z) + \a \b z^2 S_{\a,\b}(z)^2 S_{\b,\a}(z) \\
&= \dots = 1 + \frac{\a\, z\, S_{\a,\b}(z)}{1 - \b\, z\, S_{\a,\b}(z)}.
\end{align*}
Consequently, the quadratic equation
\[
\b\, z\, S_{\a,\b}^2 - \big(1 - (\a-\b) z\big) S_{\a,\b} + 1 = 0
\]
holds. Using the initial condition $S_{\a,\b}(0) = 1$ gives \eqref{MP}. Finally, exploiting \eqref{AA}, a simple algebraic manipulation leads to \eqref{CaMP} when $\g = \b$ and $\l = \frac{\a}{\b}$.
\end{proof}

\begin{cor}\label{norm}
For each $j\in\bn$, one has $\|s_j\| = \sqrt{\a} + \sqrt{\b}$.
\end{cor}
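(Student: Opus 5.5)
The plan is to show the two inequalities $\|s_j\|\ge\sqrt\a+\sqrt\b$ and $\|s_j\|\le\sqrt\a+\sqrt\b$ separately. Since $s_j$ is self-adjoint, $\|s_j\|^2=\|s_j^2\|$, so it suffices to show $\|s_j^2\|=(\sqrt\a+\sqrt\b)^2=\b(1+\sqrt{\a/\b})^2$. This number is exactly the right endpoint of the support of the Marchenko--Pastur law with rate $\a/\b$ and jump size $\b$, identified in Theorem \ref{MarPa}.

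\emph{Lower bound.} Let $\mu$ be the vacuum distribution of $s_j^2$. For each $n$,
\[
\tau_\Om(s_j^{4n})=\int t^{2n}\,d\mu(t)\le \|s_j^2\|^{2n}.
\]
Taking $2n$-th roots and letting $n\to\infty$ gives
\[
\|s_j^2\|\ge \lim_n \|t\|_{L^{2n}(\mu)}=\max\supp\mu .
\]
By Theorem \ref{MarPa}, $\mu$ has density positive on the open interval $(\b(1-\sqrt{\a/\b})^2,\ \b(1+\sqrt{\a/\b})^2)$. Hence $\max\supp\mu=(\sqrt\a+\sqrt\b)^2$, and so $\|s_j\|\ge\sqrt\a+\sqrt\b$.

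\emph{Upper bound.} This is the main obstacle, because $\Om$ is not separating for $\mathcal S$, so the vacuum distribution alone does not control the norm. The first route is the invariant subspaces from the proof of Lemma \ref{kernel}. The Fock space decomposes as an orthogonal sum of subspaces $\overline{\mathrm{span}}\{f_k\}_{k\ge 0}$, one for each $v=e_{j_1}\otimes\cdots\otimes e_{j_l}$ with $j_1\neq j$ (including $v=\Om$). On each such subspace $s_j$ acts as a Jacobi matrix with zero diagonal and off-diagonal entries $\sqrt{\om_{k+l+1}}$, which alternate between $\sqrt\a$ and $\sqrt\b$.

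Any Jacobi matrix $T$ with zero diagonal and off-diagonal entries $c_k$ satisfies
\[
|\langle Tx,x\rangle|\le 2\sum_k c_k|x_k||x_{k+1}| .
\]
Here $c_k$ alternates between $a=\sqrt\a$ and $b=\sqrt\b$, and I want to bound this by $(a+b)\|x\|^2$. Write $T=A+B$, where $A$ collects the edges with weight $a$ and $B$ those with weight $b$. The edges of each weight are pairwise disjoint (they form a matching), so $A$ and $B$ are direct sums of $2\times2$ blocks $\begin{pmatrix}0&a\\ a&0\end{pmatrix}$ and $\begin{pmatrix}0&b\\ b&0\end{pmatrix}$ (plus zeros). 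Thus $\|A\|\le a$ and $\|B\|\le b$, so $\|T\|\le a+b$.

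The bound is uniform over all the invariant subspaces, which together span the whole space. Therefore $\|s_j\|\le\sqrt\a+\sqrt\b$.

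This upper bound is elementary. A fallback, if one wanted to avoid the decomposition, would be to write $s_j=a_jP+a_jQ+(a_jP+a_jQ)^*$. Here $a_jP$ is an isometry up to scaling with $\|a_jP\|=\sqrt\b$ (it maps odd to even vectors, and $(a_jP)^*a_jP=a_j^\dag Q a_j\le \b$), and similarly $\|a_jQ\|=\sqrt\a$. This yields the cruder bound $2(\sqrt\a+\sqrt\b)$, which is not sharp, so the matching decomposition is the real argument. Combining both bounds gives the corollary.
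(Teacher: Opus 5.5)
Your proof is correct, but it reaches the upper bound by a genuinely different route from the paper's. The paper never estimates $s_j$ directly. It uses Proposition~\ref{sep} ($\Om$ is cyclic and separating for $\mathcal S_P$), so $\t_\Om$ is faithful on $\mathcal S_P$. Hence the support of the vacuum law of $Ps_j^2P$, which is the Marchenko--Pastur law of Theorem~\ref{MarPa}, is the whole spectrum of $Ps_j^2P$. This gives $\|s_j^2P\|=(\sqrt\a+\sqrt\b)^2$, both inequalities at once, and the paper then passes to $\|s_j^2\|$.

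You instead take the lower bound from the moments alone, which needs no faithfulness, only that $\Om$ is a unit vector. Your upper bound comes from an explicit estimate: you split each Jacobi block of $s_j$ into two matchings of norms $\sqrt\a$ and $\sqrt\b$. Your route is more elementary and self-contained. It bypasses the commutant computation behind Proposition~\ref{sep} and handles even and odd vectors simultaneously.

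That is a real advantage. The paper justifies $\|s_j^2P\|=\|s_j^2\|$ only by the commutation of $s_j^2$ with $P$, which by itself gives just $\|s_j^2\|=\max(\|s_j^2P\|,\|s_j^2Q\|)$. One also needs that $s_j$ is odd, so that $(s_jQ)^*=Qs_j=s_jP$ and hence $\|s_j^2Q\|=\|s_jQ\|^2=\|s_jP\|^2=\|s_j^2P\|$. In exchange, the paper's method yields more than the norm: it shows the entire spectrum of $Ps_j^2P$ is the Marchenko--Pastur support.

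Finally, your matching argument has a one-line space-free form that your fallback just misses: pair each piece with its own adjoint. With $X:=a_jP+Pa_j^\dagger$ and $Y:=a_jQ+Qa_j^\dagger$ one has $s_j=X+Y$. By \eqref{crperiod}, and since $a_j^\dagger a_j$ acts on $\ch_n$ as $\omega_n$ times a projection, $X^2=\b Q+Pa_j^\dagger a_jP\le\b I$ and $Y^2=\a P+Qa_j^\dagger a_jQ\le\a I$. So $\|s_j\|\le\sqrt\a+\sqrt\b$ directly. Incidentally, $a_jP$ maps even vectors to odd ones, not the reverse as your fallback states.
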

\begin{proof}
Fix $j\in\bn$. Notice that $P s_j^2 P \in \mathcal{S}_P$ and is self-adjoint. By Proposition \ref{sep}, the vacuum distribution of $P s_j^2 P$ is supported on the entire spectrum. Moreover, for each $n\in\bn$, since $P s_j^2 = s_j^2 P$,
\[
\t_\Om((P s_j^2 P)^n) = \t_\Om(P s_j^{2n} P) = \langle (s_j^2)^n \Om, \Om \rangle.
\]
Since the moments agree, the vacuum distribution of $P s_j^2 P$ is the Marchenko-Pastur law given in Theorem \ref{MarPa}. Now $s_j^2$ and $P$ are commuting self-adjoint operators, so $\|s_j^2 P\| = \|s_j^2\| = \|s_j\|^2$. The claim then follows from Theorem \ref{MarPa}, as $s_j^2 P = s_j^2 P^2 = P s_j^2 P$.
\end{proof}
Lemma \ref{kernel} implies that the distribution of $s_i$ has an atom at $0$ whenever $\a<\b$. We make this statement more precise below.
\begin{prop}\label{distri}
For each $j\in\bn$, $s_j$ has the vacuum distribution
\[
\n :=
\begin{cases}
(1 - \frac{\a}{\b}) \delta_0 + \widetilde{\nu} & \text{if $\a \le \b$,} \\
\widetilde{\nu} & \text{if $\a > \b$,}
\end{cases}
\]
where
\[
\di \widetilde{\nu}(x) := \frac{1}{2\pi \b |x|} \sqrt{2(\alpha+\beta)x^2 - (\alpha-\beta)^2 - x^4}\, \chi_{(-\gamma, -\eta) \cup (\eta, \gamma)}(x) \di x,
\]
and $\eta := |\sqrt{\alpha} - \sqrt{\b}|$, $\gamma := \sqrt{\alpha} + \sqrt{\b}$.
\end{prop}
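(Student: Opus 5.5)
The plan is to deduce the law of $s_j$ from the law of $s_j^2$ (Theorem \ref{MarPa}) by symmetry.

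First, the vacuum distribution $\n$ of $s_j$ is symmetric, since $\t_\Om(s_j^{2m+1})=0$ by \eqref{2ome01b}. It is also compactly supported in $[-\g,\g]$, because $\|s_j\|=\g$ by Corollary \ref{norm}. Hence $\n$ is determined by its moments. Its even moments are $\int x^{2m}\,d\n=\t_\Om(s_j^{2m})=\int t^m\,d\mu(t)$, where $\mu$ is the Marchenko--Pastur law with rate $\l=\a/\b$ and jump size $\b$. So $\n$ is the unique symmetric measure whose push-forward under $x\mapsto x^2$ is $\mu$. Explicitly, for a Borel set $A\subset(0,\infty)$ we have $\n(\{x:\ x^2\in A\})=\mu(A)$ with the mass split equally between $\pm\sqrt{A}$, and $\n(\{0\})=\mu(\{0\})$.

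Second, I read off the pieces. The atom: $\mu$ has $(1-\l)\d_0=(1-\frac{\a}{\b})\d_0$ exactly when $\a\le\b$, and no atom when $\a>\b$. This gives the case distinction, and it is consistent with Lemma \ref{kernel}. The absolutely continuous part: if $\mu$ has density $g(t)$ on $[\b(1-\sqrt{\l})^2,\b(1+\sqrt{\l})^2]=[\eta^2,\g^2]$, then the symmetric preimage has density $\tilde g(x)=|x|\,g(x^2)$ on $\eta<|x|<\g$. To check this, note $\int_{\eta<|x|<\g} h(x^2)|x|g(x^2)\,dx=2\int_\eta^\g h(x^2)g(x^2)x\,dx=\int h(t)g(t)\,dt$ via $t=x^2$.

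Third, I plug in the explicit density. Here
$$g(t)=\frac{1}{2\pi\b t}\sqrt{4\a\b-(t-(\a+\b))^2},$$
so
$$|x|\,g(x^2)=\frac{1}{2\pi\b|x|}\sqrt{4\a\b-(x^2-\a-\b)^2}.$$
Expanding the radicand gives $4\a\b-x^4+2(\a+\b)x^2-(\a+\b)^2=2(\a+\b)x^2-(\a-\b)^2-x^4$, which is the stated formula. The endpoints work out as $\b(1\pm\sqrt{\a/\b})^2=(\sqrt\b\pm\sqrt\a)^2$, giving $\eta^2$ and $\g^2$.

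There is no real obstacle; the only care needed is bookkeeping. The point-mass at $0$ must not be doubled, and the uniqueness of the moment problem must be justified by compact support. As a sanity check, I would verify that the total mass of $\widetilde\nu$ equals $\min(1,\a/\b)$, which follows automatically from the push-forward identity.
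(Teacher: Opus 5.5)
Your proof is correct, but it takes a different route from the paper.

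\textbf{The paper's route.} It works analytically with $s_j$ itself. It writes $\cam_s(z)=S_{\alpha,\beta}(z^2)$ and converts this into the Cauchy transform
$\mathcal G_{\alpha,\beta}(z)=\frac{1}{2\beta z}\bigl(z^2-(\alpha-\beta)-\sqrt{z^4-2(\alpha+\beta)z^2+(\alpha-\beta)^2}\bigr)$.
It reads off the atom from the residue at $z=0$. It obtains the density by Stieltjes inversion, splitting the complex square root into real and imaginary parts and letting $y\to0^+$.

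\textbf{Your route.} You take the identification of the law of $s_j^2$ in Theorem~\ref{MarPa} as input. Odd moments vanish and the law is compactly supported, so $\nu$ must be the symmetric lift of the Marchenko--Pastur law under $x\mapsto x^2$. The atom is then inherited directly from the free Poisson law, and the density follows from the substitution $t=x^2$.

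\textbf{What each route buys.} Yours avoids the branch-of-square-root and residue bookkeeping, which the paper handles rather quickly. It makes the total-mass check $\widetilde\nu(\mathbb R)=\min(1,\alpha/\beta)$ automatic. It also shows why the formula holds: $\nu$ is exactly the symmetrized square root of the free Poisson law, which is the ``square-root transformation'' the paper alludes to. The price is that you rely on the explicit Marchenko--Pastur density from the preliminaries, i.e.\ on the standard correspondence between that density and \eqref{CaMP}. The paper instead extracts the density of $s_j$ directly from the generating function.

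\textbf{A small remark.} You do not need the exact value $\|s_j\|=\sqrt\alpha+\sqrt\beta$ to get uniqueness of the moment problem. The bound $\|s_j\|\le 2\max(\sqrt\alpha,\sqrt\beta)$, which follows from boundedness of the creation and annihilation operators, already gives compact support.
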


\begin{proof}
Let $\cam_s$ be the moment generating function of any $s_j$, $j\in\bn$, and notice that for each $z\in\bc$,
\[
\cam_s(z) = S_{\a,\b}(z^2) = \frac{1 - (\alpha-\beta) z^2 - \sqrt{(1-(\alpha-\beta) z^2)^2 - 4 \beta z^2}}{2 \beta z^2}.
\]
The Cauchy transform is
\begin{align}
\begin{split}
\label{CT1}
\mathcal{G}_{\a,\b}(z)
&= \frac{1}{z} M_{\a,\b}\Big(\frac{1}{z}\Big)
= \frac{1 - \frac{\alpha-\beta}{z^2} - \sqrt{\big(1 - \frac{\alpha-\beta}{z^2}\big)^2 - \frac{4 \beta}{z^2}}}{2 \beta / z} \\
&= \frac{1}{2 \beta z} \big( z^2 - (\alpha-\beta) - \sqrt{z^4 - 2(\alpha+\beta) z^2 + (\alpha-\beta)^2} \big).
\end{split}
\end{align}
Thus, $z=0$ is a simple pole. Since
\[
\text{Res}_{z=0} \mathcal{G}_{\a,\b}(z) = -\frac{\a-\b-|\a-\b|}{2\b},
\]
the atomic part of the measure appears only when $\a < \b$, has weight $1 - \frac{\a}{\b}$, and is concentrated at 0.\\
For the absolutely continuous part, we apply the Stieltjes inversion formula. Let $A + iB$ denote the argument of the square root in \eqref{CT1}, with $z = x + iy$. Then
\begin{align*}
A + iB &= x^4 + y^4 - 6 x^2 y^2 - 2 (\alpha+\beta)(x^2 - y^2) + (\alpha-\beta)^2\\
&+ 4 i (x^3 y - x y^3 - (\alpha+\beta) x y).
\end{align*}
Write $\sqrt{A+iB} = P + iQ$, where
\[
P := \frac{1}{\sqrt{2}} \sqrt{\sqrt{A^2 + B^2} + A}, \quad Q := \frac{\mathrm{sgn} B}{\sqrt{2}} \sqrt{\sqrt{A^2 + B^2} - A}.
\]
Then
\begin{align}\label{CT2}
\mathcal{G}_{\a,\b}(x+iy) = \frac{1}{2 \beta} \frac{(x^2 - y^2 - P - (\alpha-\beta)) + i(2xy - Q)}{x + iy}.
\end{align}
Now, $\di \widetilde{\nu}(x) = -\frac{1}{\pi} \lim_{y \to 0^+} \mathrm{Im} \, \mathcal{G}_{\a,\b}(x + iy)$. From \eqref{CT2}, the imaginary part is
\[
\frac{1}{2 \beta (x^2 + y^2)} \big[ (2xy - Q) x - (x^2 - y^2 - P - (\alpha-\beta)) y \big].
\]
Taking the limit $y \to 0^+$ reduces this to
\[
-\frac{1}{2 \beta x^2} Q x = -\frac{1}{2 \beta |x|} \sqrt{2 (\alpha+\beta) x^2 - (\alpha-\beta)^2 - x^4}.
\]
This quantity is nonzero only on the intervals $(-\gamma, -\eta)$ and $(\eta, \gamma)$, yielding the stated absolutely continuous part. One can check that
\[
\int_{-\infty}^\infty \di \widetilde{\nu}(x) =
\begin{cases}
\frac{\a}{\b} & \text{if $\a \le \b$,} \\
1 & \text{if $\a > \b$.}
\end{cases}
\]
\end{proof}
When $\a = \b$, $\nu$ reduces to the standard Wigner semicircle law with variance $\b$. Moreover, the vacuum law of the sum of $m$ position operators is obtained by dilating the measure $\nu$ by a factor of $m$.\\
In what follows, we shall denote by $\gs$ the $C^*$-algebra generated by position operators. As for the vacuum expectation, with an abuse of notation we denote by $\t_o$ the extension of the state from the corner algebras $Q\gs Q$ to the whole $\gs$.\\
The distributions of the position operators with respect to the tracial states $\t_o$ and $\t$ can be determined starting from the $\t_o$-moment description. We first note that Proposition \ref{sqfact} and an argument that overlaps with the one leading to \eqref{2ome01b} and \eqref{2ome02a} give, for each $n$ and $i_1,\ldots,i_n\in\bn$
\begin{align}\label{momo2}
\t_\Omega(s_{i_1}...s_{i_n})=
\begin{cases}
0 & \text{if $n$ is odd},\\
\displaystyle
\sum_{\{(l_h,r_h)\}\in NCPP(2m)}\prod_{h=1}^m \om_{2h-l_h}\delta_{i_{l_h},i_{r_h}}
& \text{if $n=2m$}\,,
\end{cases}
\end{align}
\begin{align}\label{momo3}
\t_o(s_{i_1}...s_{i_n})=
\begin{cases}
0 & \text{if $n$ is odd},\\
\displaystyle
\sum_{\{(l_h,r_h)\}\in NCPP(2m)}\prod_{h=1}^m \om_{2h-l_h+1}\delta_{i_{l_h},i_{r_h}}
& \text{if $n=2m$}\,.
\end{cases}
\end{align}
Thus, the $\t_o$-moments of the position operators coincide with the vacuum moments after exchanging $\a$ and $\b$. The same rule is then applied to the Cauchy transform, and consequently to the distribution $\nu_o$, which is again a free Poisson law.\\
For the trace $\t$ defined in \eqref{trac}, the Stieltjes inversion formula yields that, for each $j\in\bn$, the distribution of $s_j$ is $\displaystyle{\frac{1}{\a+\b}(\b\nu+\a\nu_o)}$, \textit{i.e.}
$$
\frac{|\a-\b|}{\a+ \b} \delta_0 +  \frac{1}{\pi (\a+\b) |x|} \sqrt{2(\alpha+\beta)x^2 - (\alpha-\beta)^2 - x^4}\, \chi_{(-\gamma, -\eta) \cup (\eta, \gamma)}(x) \di x,
$$
where $\g$ and $\eta$ are as in Proposition \ref{distri}.\\
We conclude the section by investigating free independence for the family of position operators with respect to the tracial states introduced above.\\
Recall that for a unital $C^*$-algebra $\gc$ and a state $\f$ on it, a family of $C^*$-subalgebras $(\gc_j)_{j\in J}$, where $J$ is an index set, endowed with the same unit of $\gc$, is free independent in the state $\f$ if for each $n\in\bn$ and each $j_1\neq j_2\neq \cdots \neq j_n$ in $J$, one has
$$
\f(a_1a_2\cdots a_n)=0\,, \quad a_i\in \gc_{j_i}
$$
when $\f(a_i)=0$ for any $i$. A family $(a_j)_{j\in J}$ in $\gc$ is free independent if the $C^*$-algebras generated by each $a_j$ and the unit yield a free independent family of subalgebras. In \cite[Section 3]{BLS}, a notion of $c$-free cumulant $R^{(c)}_k$, $k\in\bn$, for a pair of states on a $*$-algebra is given. They are defined recursively, and for the pair $(\t_o,\tau_\Omega)$ this yields
\begin{align}\label{wick0}
\begin{split}
\t_o(s_{i_1} \cdots s_{i_n})&=\sum_{k=0}^{n-1}
\;
\sum_{1 < l(1) < \cdots < l(k) \leq n}
R^{(c)}_{k+1}\big[s_{i_1}, s_{i_{l(1)}}, \ldots, s_{i_{l(k)}}\big]\\
\cdot& \t_\Om (s_{i_2} \cdots s_{i_{l(1)-1}})
\cdots
\t_\Om(s_{i_{l(k-1)+1}} \cdots s_{i_{l(k)-1}})
\;
\t_o(s_{i_{l(k)+1} } \cdots s_{i_{i_n}}).
\end{split}
\end{align}
Considering the equations \eqref{momo2} and \eqref{momo3} and the decomposition
of a pair partition according to its first block, the unique solution is  given by
$$
R_2^{(c)}[s_i,s_j]=\b \delta_{i,j}, \quad R_{n}=0 \textrm{ for } n\neq 2\,.
$$
This is a reminder of the freeness in case $\a=\b$. One can also do it for all
possible pairs in $\{\t,\t_o,\t_\Omega\}$.\\
In the general case of period-2 interacting Fock space, the position operators are not free independent in the vacuum state. This happens only if $\a=\b$, \textit{i.e.} in the (trivial deformation of the) full Fock case. Indeed, one has
$$
\t_\Om\big(s_1\big(s_2^2-\t_\Om(s_2^2)I\big)s_1\big)=\t_\Om\big(s_1\big(s_2^2-\a I\big)s_1\big)=\a(\b-\a)\,.
$$
By exchanging $\a$ and $\b$, one obtains the analogous statement for $\tau_0$ and again the same choice as above implies that, in general, there is no freeness in the state $\t$ defined as in \eqref{trac}, except the case $\a=\b$, where $\t_o$ collapses to $\t_\Om$. More in detail,
\begin{align*}
\t\big(s_1\big(s_2^2-\t(s_2^2)I\big)s_1\big)&= \t(s_1s_2^2s_1)- \t(s_1^2)^2
=\frac{\b^2\a+\a^2\b}{\a+b}-\bigg(\frac{2\a\b}{\a+\b}\bigg)^2\\
&=\a\b\bigg(1-\frac{2\a\b}{(\a+\b)^2}\bigg)=\frac{\a\b}{(\a+\b)^2}(\a-\b)^2\,.
\end{align*}
Finally, one can wonder if the family $(s_i^2)_i$ is free independent with respect to the vacuum state.  The following more general results answers the question in the affirmative sense. To this aim, denote by $\gd_j$ the $C^*$-closure of $\gd^0_j$. 
\begin{prop}\label{2perfree}
For a 1-mode type interacting Fock space $\cf(\ch,\l_n)$, the following hold:
\begin{itemize}
\item[(i)] The family $(s_j)_j$ is $\t_\Om$- free independent in the vacuum state if and only if then the weights $(\omega_n)_{n\geq 1}$ are constant.
\item[(ii)]  The family $(s_j^2)_j$ is $\t_\Om$-free if and only if the weights $(\om_n)_n$ have period 2. Moreover, if $(\om_{n})$ has period 2 , the family $(\gd_j)_j$ is $\t_\Om$-free independent.
\end{itemize}
\end{prop}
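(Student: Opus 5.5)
We prove the two "only if" directions by exhibiting an explicit mixed moment that must vanish under freeness, and the "if" directions through a first-block decomposition of the moments \eqref{2ome02a}, extended to several indices as in \eqref{momo2}.

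\emph{Necessity in (i).} Take $i\neq j$. Freeness would force
\[
\t_\Om\big(s_i(s_j^2-\t_\Om(s_j^2))s_i\big)=0 .
\]
The general moment formula \eqref{momo2}, valid for arbitrary weights, evaluates the left side as $\om_1(\om_2-\om_1)$, exactly as in the computation preceding the statement. Hence $\om_2=\om_1$. To reach higher levels, replace $s_i$ by suitable products. Take the centred elements $x=s_i^{2k-1}$, which has zero vacuum mean, and $y=s_j^2-\om_1$. Then compute $\t_\Om(xyx)$. By Proposition \ref{moment1}, the only non crossing pairings that survive place the $s_j$-pair strictly inside $2k-1$ levels, and this gives a multiple of $(\om_{2k}-\om_{1})$ with a positive coefficient. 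By induction on $k$ one concludes $\om_{n}=\om_1$ for all $n$.

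\emph{Necessity in (ii).} With $x=s_i^2-\om_1$ and $y=s_j^2-\om_1$, the quantity $\t_\Om(xyx)$ is a linear combination of $\om_3-\om_1$ terms. More generally, alternating moments of centred powers $s_i^{2k}$ and $s_j^2$ isolate $\om_{n+2}-\om_n$. Choosing the nesting depth appropriately therefore yields $\om_{n+2}=\om_n$ for every $n$.

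\emph{Sufficiency.} (i) with constant weights is just the full Fock space rescaled, so the $s_j$ are free semicirculars. The main work is the last claim of (ii): the algebras $\gd_j$ are $\t_\Om$-free when the weights have period 2. By continuity it suffices to work with $\gd_j^0$. The key observation is the structure given by Proposition \ref{free1}: every element of $\gd_j^0$ changes particle number by an even amount, and every factor $\om_{N+h}$ depends only on the parity of $N$. Since $P$ and $Q$ commute with all even elements, and parity is preserved, $\om_{N+h}$ acts on even tensors as a constant. It follows that on $\bigoplus_n\ch_{2n}$ the map $\Om\mapsto \Om$ identifies the even part of $\cf(\ch,\a,\b)$ with a full Fock space over the two-letter words $e_i\otimes e_j$.

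We aim to realise $(\bigoplus_n\ch_{2n},\Om)$ as the reduced free product of the spaces $(\mathcal K_j,\Om)$, where $\mathcal K_j$ is the closed span of $\gd_j^0\Om$, i.e. even tensors built only from $e_j$. A basis tensor $e_{i_1}\otimes\cdots\otimes e_{i_{2n}}$ splits uniquely into maximal blocks of equal letters, but odd-length blocks spoil parity. This is the main obstacle, and we expect it to require care: either compute the vacuum moments of alternating centred words directly via \eqref{2ome02a} and show they vanish, or use the $J$-conjugation symmetry to handle odd blocks.

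If the direct route is taken, the plan is as follows. Take $a_k\in\gd^0_{j_k}$ centred, with consecutive indices distinct, and compute $\langle\Om,a_1\cdots a_n\Om\rangle$. Each $a_k$ is a normal-ordered term $(a^\dag_{j_k})^{p}a_{j_k}^{q}\,c(N\bmod 2)$ with $p+q$ even. Acting on $a_{k+1}\cdots a_n\Om$, whose leftmost letter is not $j_k$ unless the component is $\Om$, the annihilators of $a_k$ kill the vector unless $q=0$. Parity invariance makes the scalar $c$ independent of the depth, so centring removes the $p=q=0$ component. Inductively, $a_1\cdots a_n\Om$ is then orthogonal to $\Om$, which proves freeness. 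Freeness of $(s_j^2)_j$ follows at once, since $s_j^2\in\gd_j$.
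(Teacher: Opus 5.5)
The necessity arguments have a parity gap: the sandwich moments you use detect only half of the weights. In $\tau_\Omega\big(s_i^{m}(s_j^2-\omega_1)s_i^{m}\big)$ the two $j$-letters are adjacent, so they must pair with each other and form an innermost block. If $d(\pi)$ denotes the number of $i$-blocks of $\pi$ enclosing this block, the moment equals $\sum_\pi w(\pi)\,(\omega_{d(\pi)+1}-\omega_1)$. Moreover $d(\pi)\equiv m \pmod 2$, since each of the first $m$ letters is either paired among them or straddles the middle.

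Hence with $m=2k-1$ in (i) you only ever obtain $\omega_{2k}=\omega_1$, and ``by induction $\omega_n=\omega_1$ for all $n$'' does not follow. A sequence with $\omega_{2k}=\omega_1$ for all $k$ but $\omega_3\neq\omega_1$ passes every one of your tests, yet $\tau_\Omega\big(s_i^2(s_j^2-\omega_1)s_i^2\big)=\omega_1\omega_2(\omega_3-\omega_1)\neq 0$.

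The repair is easy. The element $x$ need not be centred: for any $x$ in the algebra of $s_i$, freeness gives $\tau_\Omega(xyx)=\tau_\Omega(x^2)\tau_\Omega(y)=0$. So also take $m=2k$, which isolates $\omega_1\cdots\omega_{2k}(\omega_{2k+1}-\omega_1)$. Alternatively, note that freeness of $(s_j)_j$ implies freeness of $(s_j^2)_j$, and combine (ii) with $\omega_2=\omega_1$. (The paper instead uses $\|p_n(s_2)s_1\Omega\|=\|p_n(s_2)\Omega\|\,\|s_1\Omega\|$.)

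The same obstruction hits (ii) if, as your worked case suggests, you generalise to $\tau_\Omega(x_k y x_k)$ with $x_k$ the centred $s_i^{2k}$ and $y=s_j^2-\omega_1$. Now $d(\pi)$ is even, so you get $\omega_{2k+1}=\omega_1$ for all $k$ but never $\omega_4=\omega_2$, $\omega_6=\omega_4,\dots$. Once the odd weights agree, all these moments vanish whatever the even weights are.

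The long power has to sit in the middle. Freeness gives $\tau_\Omega(s_i^2s_j^{2n}s_i^2)=\tau_\Omega(s_i^4)\tau_\Omega(s_j^{2n})$, i.e.\ $\omega_1\omega_2\big(m^{(2)}_{2n}-m_{2n}\big)=0$. Here $m_{2n}$ and $m^{(2)}_{2n}$ are the vacuum moments of $s_j^{2n}$ computed with the weights $(\omega_h)_h$ and $(\omega_{h+2})_h$ respectively, because the nested outer $s_i$-pair pushes every block two levels down. By induction only the rainbow pairing contributes, so $m^{(2)}_{2n}-m_{2n}=\omega_1\cdots\omega_{n-1}(\omega_{n+2}-\omega_n)$, which forces $\omega_{n+2}=\omega_n$ for every $n$, both parities at once. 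This is the paper's argument, and it is what your phrase ``isolate $\omega_{n+2}-\omega_n$'' has to mean.

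Your sufficiency proof, by contrast, is correct and differs from the paper's. You show by induction that $a_k\cdots a_n\Omega$ lies in the closed span of even tensors beginning with $e_{j_k}$:
annihilating parts vanish on vectors beginning with $e_{j_{k+1}}\neq e_{j_k}$;
the diagonal part acts on even levels by its vacuum value and is removed by centring;
creating parts prepend $e_{j_k}^{\otimes p}$.
This handles arbitrary elements of $\mathfrak{D}_j^0$ in one stroke. The paper instead reduces to single generators and splits into cases according to whether some factor is purely diagonal. The free-product detour can simply be dropped.
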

\begin{proof}
  For (i), we note that, if the sequence $(\omega_n)_{n\geq 1}$ is constant, then
  the family interacting Fock space reduces to the full Fock space based on the Hilbert space $\lambda_1\ch$. In this case the $*$-algebras generated by annihilators are $\t_\Om$-free.\\
Conversely, we first recall that if $x$ and $y$ are free in the vacuum, then
  $\tau_\Om(x^*y^*yx)=\tau_\Om(x^*x)\tau_\Om(y^*y)$, and hence $\|yx\Omega\|=\|x\Omega\|\|y\Omega\|$. Since for each $n$
$$
s_2^n\Om=e_2^{\otimes n}
+\sum_{k=0}^{n-1}c_{n,k}e_2^{\otimes k}\,, \quad c_{n,k}\in\br
$$
the vectors $\Om,s_2\Om,\ldots,s_2^n\Om$ and $\Om,e_2,\ldots,e_2^{\otimes n}$
span the same subspace. Therefore, there exists a unique monic polynomial
$p_n$ of degree $n$ such that $p_n(s_2)\Om=e_2^{\otimes n}$. We choose $x=s_1$ and $y=p_n(s_2)$ %where $P\in \mathbb R[X]$ so that
  %P(s_2)\Om=e_{2}^{\otimes n}$,
  $n\geq 1$. We get $\|e_{2}^{\otimes n}e_1\|=\|e_{2}^{\otimes n}\|\|e_1\|$, that is $\lambda_{n+1}=\lambda_n\lambda_1$. Hence, $\omega_n=\lambda_n/\lambda_{n-1}=\lambda_1$ for all $n\geq 1$.\\
Let us prove (ii). We start with the second part. Suppose that the weights are 2-periodic. For $\t_\Om$-freeness of $(\gd_j)_j$, it is sufficient to prove that for any $n\in\bn$, $i_1\neq i_2\neq \cdots\neq i_n\in\bn$, one has
\begin{equation}\label{freen}
\t_\Om\big((x_1-\t_\Om(x_1)I)(x_2-\t_\Om(x_2)I)\cdots (x_n-\t_\Om(x_n)I)\big)=0\,,
\end{equation}
where for each $j$, $x_j=(a_{i_j}^\dag)^{n_{+,j}}a_{i_j}^{n_{-,j}}\prod_{k=1}^{n_{0,j}}\om_{N+h_k}$ is a linear generator
of the unital $*$-algebra $\gd^0_{i_j}$ appearing in Proposition \ref{free1}. \\
We first suppose that there exists at least a word $x_k$ in the chain
above such that $n_{+,k}+n_{-,k}=0$, and take $m$ the largest index
for which this holds. Thus, $x_m$ has the form
$\displaystyle{\prod_{l=1}^r\om_{N+h_l}}$, and for each $p>m$,
$\t_\Om(x_p)=0$ as $n_{+,p}+n_{-,p}>0$. Hence,
$\big(x_{m+1}-\t_\Om(x_{m+1})I\big)\cdots\big(x_{n}-\t_\Om(x_{n})I\big)=x_{m+1}\cdots
x_n$, and denote
$$
\displaystyle{M_+(m):=\sum_{j=m+1}^n n_{+,j}}\,, \quad \displaystyle{M_-(m):=\sum_{j=m+1}^n n_{-,j}}\,.
$$
If $M_+(m)<M_-(m)$, then $x_{m+1}\cdots x_n\Om=0$, as there are more annihilators than creators. Therefore, \eqref{freen} follows. Since in general $M_+(m)-M_-(m)\in 2\bz$, the case $M_+(m)\geq M_-(m)$ implies $M_+(m)-M_-(m)=2p$, for $p\in\bn$. As a consequence, \eqref{cr2} gives
\begin{align*}
&(x_{m}-\t_\Om(x_{m})I\big)\big(x_{m+1}-\t_\Om(x_{m+1})I\big)\cdots\big(x_{n}-\t_\Om(x_{n})I\big)\Om\\
&= \bigg(\prod_{l=1}^r\om_{N+h_l}-\prod_{l=1}^r\om_{h_l}\bigg)x_{m+1}\cdots x_n\Om\\
&=x_{m+1}\cdots x_n\bigg(\prod_{l=1}^r\om_{N+h_l+2p}-\prod_{l=1}^r\om_{h_l}\bigg)\Om\,.
\end{align*}
Here,
$$
\displaystyle{\bigg(\prod_{l=1}^r\om_{N+h_l+2p}-\prod_{l=1}^r\om_{h_l}\bigg)\Om=\bigg(\prod_{l=1}^r\om_{h_l+2p}-\prod_{l=1}^r\om_{h_l}\bigg)\Om}=0\,,
$$
as $\om_{l+2p}=\om_l$ for each $l\in\bn^*$. Thus, also in this case, \eqref{freen} follows.\\
Suppose now that for each $k=1,\ldots, n$, one has $n_{+,k}+n_{-,k}>0$. This gives $x_k-\t_\Om(x_k)I=x_k$, as $x_k$ has mean zero. Therefore, the chain $(x_1-\t_\Om(x_1)I)(x_2-\t_\Om(x_2)I)\cdots (x_n-\t_\Om(x_n)I)$ reduces to $x_1\cdots x_n$. Again \eqref{cr2} allows to shift each $\prod_{k=1}^{n_0}\om_{N+h_k}$ to the right of the above chain. Thus, if $F:\bz\rightarrow \bc$, one simply denotes
$$
x_1\cdots x_n=(a^\dag_{i_1})^{n_{+,1}}a_{i_i}^{n_{-,1}}\cdots (a^\dag_{i_i})^{n_{+,n}}a_{i_n}^{n_{-,n}}F(N)\,,
$$
where $F(N)$ is achieved through functional calculus.
Since $F(N)\Om=F(0)\Om$, \eqref{freen} is proved if one gets
$$
\t_\Om((a^\dag_{i_1})^{n_{+,1}}a_{i_i}^{n_{-,1}}\cdots (a^\dag_{i_i})^{n_{+,n}}a_{i_n}^{n_{-,n}})=0\,.
$$
If $n_{-,n}>0$, the thesis is got to. Therefore, we reduce ourselves to showing that
\begin{equation}\label{freen2}
\t_\Om((a^\dag_{i_1})^{n_{+,1}}a_{i_i}^{n_{-,1}}\cdots (a^\dag_{i_i})^{n_{+,n}})=0\,,
\end{equation}
which trivially follows if $n_{-,k}=0$ for each $k\leq n-1$. In contrast, take $m$ the largest index for which $n_{-,m}>0$. As in this case also $n_{+,m+1}>0$, \eqref{cr} implies \eqref{freen2}.\\
Now assume that $(s_j^2)_j$ is $\tau_\Om$-free independent. First, iterating the arguments in the proof of Proposition \ref{moment1}, for each $\eps\in\{-1,1\}^{2n}_+$ one has
\begin{equation}\label{dep2}
\t_\Om\big(a_ia_ia^{\eps(1)}_j \cdots a^{\eps(2n)}_j a_i^\dag a_i^\dag\big)=\om_1\om_2\prod_{h=1}^n \om_{2h-l_h+2}\,.
\end{equation}
Freeness yields $\t_\Om(s_i^2s_j^{2}s_i^2)=\t_\Om(s_i^4)\t_\Om(s_j^{2})$, which immediately entails $\om_1(\om_2\om_3+\om_1^2)=\om_1(\om_1\om_2+\om_1^2)$, and therefore $\om_3=\om_1$. An inspection of $\t_\Om(s_i^2s_j^{4}s_i^2)=\t_\Om(s_i^4)\t_\Om(s_j^{4})$ gives
$$
\om_1\om_2(\om_3\om_4+\om_3^2)+\om_1^2(\om_1\om_2+\om_1^2)=(\om_1\om_2+\om_1^2)^2\,,
$$
which implies $\om_2=\om_4$, as $\om_1=\om_3$. Suppose now that, for each $2\leq j\leq n+1$, $\om_j=\om_1$ if $j$ is odd, and $\om_j=\om_{2}$ if $j$ is even. By \eqref{2ome01b} and \eqref{dep2} it turns out
\begin{align*}
\t_\Om\big(s_i^2s_j^{2n}s_i^2\big)&=\t_\Om\big(a_ia_is_j^{2n}a_i^\dag a_i^\dag\big)+\t_\Om\big(a_ia_i^\dag s_j^{2n}a_ia_i^\dag\big)\\
&=\om_1\om_2\sum_{\eps\in\{-1,1\}^{2n}_+}\prod_{h=1}^n \om_{2h-l_h+2}\,\, +\om_1^2\sum_{\eps\in\{-1,1\}^{2n}_+}\prod_{h=1}^n \om_{2h-l_h}
\end{align*}
and
$$
\t_\Om(s_i^4)\t_\Om(s_j^{2n})=(\om_1\om_2+\om_1^2)\sum_{\eps\in\{-1,1\}^{2n}_+}\prod_{h=1}^n \om_{2h-l_h}
$$
Since by freeness $\t_\Om(s_i^2s_j^{2n}s_i^2)=\t_\Om(s_i^4)\t_\Om(s_j^{2n})$,
$$
\sum_{\eps\in\{-1,1\}^{2n}_+}\prod_{h=1}^n \om_{2h-l_h+2}=\sum_{\eps\in\{-1,1\}^{2n}_+}\prod_{h=1}^n \om_{2h-l_h}
$$
As above, one reduces to the partition with the maximum depth. Indeed, all other non crossing pair partitions involve weights already determined by the induction hypothesis, so that the only contribution depending on $\om_{n+2}$ arises in this case. Here, $\om_3\cdots \om_{n+1}\om_{n+2}=\om_1\cdots \om_{n-1}\om_n$. If $n$ is even, then the induction assumption gives $\om_{n+1}=\om_1$, and therefore $\om_{n+2}=\om_2$. The contrary happens if $n$ is odd.
\end{proof}
As usual, we denote by $\mathbb F_\infty$ the free group with countable generators, and by $\l:\mathbb F_\infty\rightarrow \mathcal{B}(\ell^2(\mathbb F_\infty))$ its left regular representation. Recall that $L(\mathbb F_\infty)$ is the group von Neumann algebra on $\mathbb F_\infty$, namely $L(\mathbb F_\infty)=\{\l(g)\mid g\in \mathbb F_\infty\}''$.
\begin{cor}
The von Neumann algebra $\mathcal{S}$ contains a (non unital) copy of $L(\mathbb F_\infty)$. \end{cor}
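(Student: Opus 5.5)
The plan is to locate a copy of $L(\mathbb F_\infty)$ inside the corner $\mathcal S_P=P\mathcal SP$, with unit $P$. On this corner the vacuum state $\t_\Om$ is a faithful normal trace by Propositions \ref{sep} and \ref{evenfactor}. The candidates are the operators $x_j:=s_j^2P=Ps_j^2P$, $j\in\bn$, which are self-adjoint elements of $\mathcal S_P$ since $P$ commutes with $s_j^2$. Each $s_j^2$ lies in $\gd_j$: expanding $s_j^2$ in normal order via \eqref{crperiod} gives terms with $n_++n_-\in\{0,2\}$ together with $\a P+\b Q=\om_{N+1}$, so $s_j^2\in\gd^0_j$. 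Likewise $P=(\om_{N+1}-\b)/(\a-\b)$ belongs to every $\gd^0_j$, because it is a function of $\om_{N+1}$.

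By Proposition \ref{2perfree}(ii), the algebras $(\gd_j)_j$ are $\t_\Om$-free. Hence the unital algebras $C^*(s_j^2,I)$ are free, and so are the unital algebras generated by $x_j$ and $P$. Compressing to $P$ does not affect this: for polynomials $f_k$, the mixed moments $\t_\Om\big(f_1(x_{j_1})\cdots f_n(x_{j_n})\big)$ in $\mathcal S_P$ coincide with mixed moments of elements $f_k(s^2_{j_k})$-type expressions computed in $\mathcal S$, because $P\Om=\Om$ and $P$ commutes with all $s_j^2$. Consequently $(x_j)_j$ is a free family in $(\mathcal S_P,\t_\Om)$.

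By Theorem \ref{MarPa}, each $x_j$ has the Marchenko--Pastur distribution with rate $\a/\b$ and jump size $\b$. When $\a\ge\b$ this law is diffuse (no atom at $0$). The von Neumann algebra $\{x_j\}''$ is then isomorphic to $L^\infty$ of a diffuse measure, i.e. to $L(\bz)$, carrying a faithful trace. In the case $\a<\b$ the atom at $0$ has weight $1-\a/\b$, and I would fix this by replacing $x_j$ with a suitable function of it. For example, take $y_j:=g(x_j)$ with $g$ equal to $0$ at $0$ and injective on the continuous part. The algebra $\{y_j\}''$ then contains the diffuse part on the projection $e_j=\chi_{(0,\infty)}(x_j)$, but it is not diffuse on all of $P$. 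Simpler still: compose with a Borel map sending the law of $x_j$ to Lebesgue measure on $[0,1]$, which exists whenever the law has at most the single atom concentrated on a set on which we spread it out.

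That last option fails, since an atom cannot be made diffuse inside $\{x_j\}''$. Instead I would use only the diffuse part: $\{x_j\}''\cong L^\infty([0,1])\oplus\bc$, which contains a Haar unitary $u_j$ on $e_j$ but not on $P$. Since freeness in a non-unital corner is awkward, the cleaner route is to avoid it, and there are two ways to do so. First, if $\a\ge\b$, take Haar unitaries $u_j\in\{x_j\}''$. Free Haar unitaries with a faithful trace generate $L(\mathbb F_\infty)$, so $\{u_j\}''\cong L(\mathbb F_\infty)$ with unit $P$. Second, if $\a<\b$, a result of Dykema on free products of diffuse-plus-atom abelian algebras gives that $*_j(L^\infty\oplus\bc)$, with atom weight $1-\a/\b<1$ over infinitely many factors, is an interpolated free group factor $L(\mathbb F_t)$ with $t=\infty$. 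So $\{x_j\}''\cong L(\mathbb F_\infty)$ in that case too.

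The main obstacle is this identification step, namely that the von Neumann algebra generated by the free family $(x_j)$ in $(\mathcal S_P,\t_\Om)$ is isomorphic to $L(\mathbb F_\infty)$. Here faithfulness of $\t_\Om$ on $\mathcal S_P$ (Proposition \ref{sep}) is essential, since it makes the GNS representation of the free product isomorphic to the concrete algebra. The result is non-unital in $\mathcal S$, since its unit is $P$.
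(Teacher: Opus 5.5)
Your argument is correct. For $\alpha\ge\beta$ it is the paper's argument: $(s_j^2P)_j$ is free in $(\mathcal S_P,\tau_\Omega)$ by Proposition~\ref{2perfree}, and each element has a diffuse Marchenko--Pastur law. Faithfulness of $\tau_\Omega$ on $\mathcal S_P$ then identifies the generated algebra with $L(\mathbb F_\infty)$, with unit $P$.

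For $\alpha<\beta$ you take a genuinely different route. You stay in the even corner and keep the atom of weight $1-\alpha/\beta$. You then identify $\{s_j^2P\}''$ with the free product of infinitely many copies of $L^\infty[0,1]\oplus\mathbb C$ and invoke Dykema's free-dimension theorem to get $L(\mathbb F_\infty)$. The paper instead passes to the odd corner. It observes that the joint $\tau_o$-distribution of $(s_j^2Q)_j$ is that of $(s_j^2P)_j$ under $\tau_\Omega$ with $\alpha$ and $\beta$ exchanged. The laws thus become Marchenko--Pastur with rate $\beta/\alpha>1$, hence diffuse, and Voiculescu's result for free diffuse abelian algebras applies again, now with unit $Q$.

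The paper's route needs only the elementary diffuse case. However, it relies on transferring the moment formula \eqref{momo3}, and with it freeness, to $(\mathcal S_Q,\tau_o)$, which the paper only sketches. Your route avoids that transfer and gives the slightly stronger fact that $\{s_j^2P\}''\cong L(\mathbb F_\infty)$ for every choice of $\alpha,\beta$. The price is citing Dykema's considerably heavier theorem on free products of hyperfinite algebras with atoms. When you write this up, delete the abandoned detour through $g(x_j)$ and Borel reparametrizations: as you noticed, no function of $x_j$ can remove the atom, so those sentences only obscure the argument.
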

\begin{proof}
By Proposition \ref{2perfree}, using that $P\Omega=P$ and $P$
commutes with $s_i^2$, the family $\{s_i^2P\mid i\in \mathbb N\}$ is
free in $\mathcal{S}_P$ (w.r.t. the trace $\t_\Om$), thus also free with respect to the trace
$\tau$. By Theorem \ref{MarPa}, for each $i\in\bn$, the vacuum distribution of
  $s_i^2P$ is diffuse if $\a>\b$, thus the von Neumann algebra $\{s_i^2P\mid i\in \mathbb N\}''$ is isomorphic to $L(\mathbb F_\infty)$ \cite{V} with unit $P$.\\
When $\b>\a$, one just needs to notice that the joint distribution of
  $\{s_i^2Q\}$ with respect to $\tau_o$ is the same as that of $\{s_i^2P\}$ with respect to $\tau_\Omega$ if one exchanges the role of $\a$ and $\b$. Thus, one can conclude as above.
\end{proof}
\begin{rem}
In general the von Neumann algebra $W^*(s_1)$ generated by $s_1$ is not a masa in $\mathcal S$. Indeed $q_i=1_{\{0\}}(s_i)$ is a projection with $\displaystyle{\tau(q_i)=\frac{|\a-\b|}{\a+\b}}$. If $2|\a-\beta|>\a+\b$, then $q=q_1\wedge q_2$ is a non zero
  projection as $\tau(1-q_1)+\tau(1-q_2)<1$. By Lemma \ref{kernel} the kernels of $s_1$ and $s_2$ are different subspaces, hence $q\neq q_1$. This means that $q$ is a nonzero proper subprojection of $q_1$, and therefore is not in $W^*(s_1)$. But clearly $q$ commutes with $s_1$, thus $W^*(s_1)$ is not maximal abelian.
\end{rem}
We end with some information about the vacuum law for sums and product of square of position operators.\\
Recall that the free cumulants of a free Poisson law with rate $\frac{\a}{\b}$ and size $\b$ are $r_m=\a\b^{m-1}$ for each $m\geq 1$ (see, \textit{e.g} . \cite[Lecture 11]{NS}). The additivity of the free cumulants given by freeness and Proposition \ref{MarPa} show that the vacuum law of $\displaystyle{\sum_{i=1}^n s_i^2}$ is again a free Poisson, with parameters $(\frac{n\a}{\b}, \b)$.\\
Finally, the $S$-transform \cite{NS} of each $s_i^2$ is $S(z)=\frac{1}{\a+\b z}$. Thus, the multiplicative property given by freeness implies that the vacuum law of $\displaystyle{\prod_{i=1}^n s_i^2}$ has the $S$-transform given by $\overline{S}(z)=\frac{1}{(\a+\b z)^n}$, \textit{i.e.} it is the Fuss-Catalan distribution of order $n$ when $\a=\b$ \cite{Ml}.

\section*{Acknowledgments}
\noindent The first-named author acknowledges the support of italian INDAM-GNAMPA

\end{document}